\definecolor{rr}{rgb}{.8,0,.3}
\newfont{\bb}{msbm10 at 11pt}
\newfont{\bbsmall}{msbm8 at 8pt}
\def\rth{\mathbb{R}^3}
\def\R{\mathbb{R}}
\def\B{\mathbb{B}}
\def\Z{\mathbb{Z}}
\def\C{\mathbb{C}}
\def\Q{\mathbb{Q}}
\def\esf{\mathbb{S}}
\newcommand{\la}{\looparrowright}
\newcommand{\ben}{\begin{enumerate}}
\newcommand{\bit}{\begin{itemize}}
\newcommand{\een}{\end{enumerate}}
\newcommand{\eit}{\end{itemize}}
\newcommand{\wh}{\widehat}
\newcommand{\wt}{\widetilde}
\newcommand{\sol}{\mathrm{Sol}_3}
\newcommand{\ed}{\end{document}}
\def\a{{\alpha}}
\def\G{{\Gamma}}
\def\l{{\lambda}}
\def\de{{\delta}}
\def\ve{{\varepsilon}}
\def\cA{\mathcal{A}}
\def\cC{\mathcal{C}}
\def\cH{\mathcal{H}}
\def\cL{\mathcal{L}}
\def\cM{\mathcal{M}}
\newcommand{\chM}{\mathcal{M}^1}
\def\cG{\mathcal{G}}
\def\cT{\mathcal{T}}
\let\hat=\widehat
\let\landa=\lambda
\let\alfa=\alpha
\def\landa{\lambda}
\def\flecha{\rightarrow}
\def\esiz{\langle}
\def\esde{\rangle}
\def\cte.{\mathop{\rm cte.}\nolimits}
\def\det{\mathop{\rm det}\nolimits}
 \def\dim{\mathop{\rm dim }\nolimits}
\def\E{\mathbb{E}}
\def\B{\mathbb{B}}
\def\Q{\mathbb{Q}}
\def\R{\mathbb{R}}
\def\Z{\mathbb{Z}}
\def\C{\mathbb{C}}
\def\H{\mathbb{H}}
\def\S{\mathbb{S}}
\def\P{\mathbb{P}}
\def\cb{\overline{\mathbb{C}}}
\newtheorem{theorem}{Theorem}[section]
\newtheorem{lemma}[theorem]{Lemma}
\newtheorem{proposition}[theorem]{Proposition}
\newtheorem{remark}[theorem]{Remark}
\newtheorem{corollary}[theorem]{Corollary}
\newtheorem{definition}[theorem]{Definition}
\newtheorem{fact}[theorem]{Fact}
\newcommand{\su}{{\rm SU}(2)}
\newcommand{\EE}{\wt{\mathrm E}(2)}
\newtheorem{ddescription}[theorem]{Description}
\renewcommand{\sl}{\wt{\mathrm{SL}}(2,\R)}
\numberwithin{equation}{section}
\begin{document}

\begin{title}
{Constant mean curvature spheres in  homogeneous three-spheres}
\end{title}
\today
\author{William H. Meeks III}
\address{William H. Meeks III, Mathematics Department,
University of Massachusetts, Amherst, MA 01003}
\email{profmeeks@gmail.com}
\thanks{The first author was supported in part by NSF Grant DMS -
   1004003. Any opinions, findings, and conclusions or recommendations
   expressed in this publication are those of the authors and do not
   necessarily reflect the views of the NSF}
\author{Pablo Mira}
\address{Pablo Mira, Department of
Applied Mathematics and Statistics, Universidad Polit´ecnica de
Cartagena, E-30203 Cartagena, Murcia, Spain.}

\email{pablo.mira@upct.es}
\thanks{The second author was supported in part by
MICINN-FEDER, Grant No. MTM2010- 19821 and Fundación Séneca, R.A.S.T
2007-2010, reference 04540/GERM/06.}

\author{Joaqu\'\i n P\'erez}
\address{Joaqu\'\i n P\'erez, Department of Geometry and Topology,
University of Granada, 18001 Granada, Spain}
 \email{jperez@ugr.es}

\author{Antonio Ros}
\address{Antonio Ros, Department of Geometry and Topology,
University of Granada, 18001 Granada, Spain}
 \email{aros@ugr.es}
\thanks{The third and fourth authors were supported in part
by MEC/FEDER grants no. MTM2007-61775 and MTM2011-22547, and
Regional J. Andaluc\'\i a grant no. P06-FQM-01642}

\subjclass{Primary 53A10; Secondary 49Q05, 53C42}


\keywords{Minimal surface, constant mean curvature, $H$-potential,
stability, index of stability, nullity of stability, curvature
estimates, Hopf uniqueness, metric Lie group, homogeneous
three-manifold, left invariant metric, left invariant Gauss map.}


\begin{abstract}
We  give a complete classification of the immersed constant mean
curvature spheres in a three-sphere with an arbitrary homogenous
metric, by proving that for each $H\in\R$, there exists a constant
mean curvature $H$ sphere in the space that is unique up to an
ambient isometry.
\end{abstract}
\maketitle

\section{Introduction.}
\label{sec:introduction} The classification and geometric
description of constant mean curvature spheres is a fundamental
problem in classical surface theory. There are two highly
influential results on this problem, see \cite{AbRo1,AbRo2,hf1}:

\begin{theorem}[Hopf Theorem] \label{hopf}
An immersed sphere of constant mean curvature in a complete, simply
connected three-dimensional manifold $\Q^3(c)$ of constant sectional
curvature $c$ is a round sphere.
 \end{theorem}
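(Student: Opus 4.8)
The plan is to run the classical argument built around the Hopf quadratic differential. First I would invoke the uniformization theorem: endowing the immersed sphere $\Sigma$ with the conformal structure induced by its first fundamental form makes it a compact genus-zero Riemann surface, hence biholomorphic to $\CP^1=\S^2$. Working in a local conformal parameter $z=x+iy$, the induced metric takes the form $\lambda^2\,|dz|^2$ for some positive function $\lambda$, and I would fix a unit normal field $N$ along the immersion $f:\Sigma\to\Q^3(c)$. Writing $\nabla$ for the Levi-Civita connection of $\Q^3(c)$, the Hopf differential is the locally defined quadratic form
\[
Q \;=\; \langle \nabla_{\partial_z}\partial_z,\, N\rangle\, dz^2 \;=\; p(z)\, dz^2 .
\]
A direct computation shows that $Q$ is a well-defined quadratic differential on $\Sigma$, independent of the conformal parameter, and that $Q$ vanishes at a point precisely when that point is umbilic; indeed, in isothermal coordinates $p=\tfrac12(e-g)-if$ in terms of the second fundamental form coefficients $e,f,g$, so $p=0$ is exactly the condition that the two principal curvatures coincide.

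The core step is to prove that $Q$ is holomorphic, i.e.\ $\partial_{\bar z}p=0$. For this I would differentiate the defining expression and invoke the Codazzi equation of the immersion. In an ambient space of constant sectional curvature $c$, the normal component of the curvature tensor $R(X,Y)Z$ is symmetric in the relevant indices, so the curvature contribution to Codazzi cancels and the second fundamental form becomes a Codazzi tensor modulo the mean-curvature gradient; concretely one obtains, up to a nonzero constant factor, an identity of the schematic form
\[
\partial_{\bar z}\, p \;=\; \lambda^2\, \partial_z H .
\]
Since $H$ is constant by hypothesis, the right-hand side vanishes and $Q$ is a holomorphic quadratic differential on $\Sigma$. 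I expect this to be the main obstacle: it requires carefully organizing the Codazzi equations in conformal coordinates and verifying that the constant-curvature hypothesis is exactly what forces the ambient curvature terms to drop out, leaving only the gradient of $H$.

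To finish, I would use the topology of the sphere. A holomorphic quadratic differential on $\CP^1$ is a holomorphic section of $(T^*)^{\otimes 2}$, a line bundle of degree $-4<0$, and therefore vanishes identically; equivalently, in the coordinate $w=1/z$ near the point at infinity, $p\,dz^2$ cannot extend holomorphically unless $p\equiv 0$. Hence $Q\equiv 0$ on $\Sigma$, which says that the trace-free part of the second fundamental form is zero, i.e.\ $\Sigma$ is totally umbilic. Finally I would invoke the classical local classification of totally umbilic surfaces in a space form $\Q^3(c)$, according to which such a surface is an open piece of a round sphere (among the umbilic leaves, compactness of $\Sigma$ rules out the non-compact ones). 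Since $\Sigma$ is a complete immersed sphere, it must be a complete round sphere, as claimed.
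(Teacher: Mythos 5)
Your argument is correct and is precisely the classical Hopf-differential proof; the paper does not reprove this statement but cites it (to Hopf and to the Abresch--Rosenberg papers), and in its introduction it explicitly identifies the holomorphic quadratic differential as the mechanism behind the theorem, so your route coincides with the one the paper has in mind. The two points you rightly flag as needing care --- the Codazzi computation giving $\partial_{\bar z}p=\lambda^2\,\partial_z H$ in a space form, and the fact that the only compact totally umbilic surfaces in $\Q^3(c)$ are round spheres --- are exactly the standard details, and your treatment of them is sound.
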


\begin{theorem}[Abresch-Rosenberg Theorem] \label{abro}
An immersed sphere of constant mean curvature in a simply connected
homogeneous three-manifold with a four-dimensional isometry group is
a rotational sphere.
\end{theorem}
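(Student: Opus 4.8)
The plan is to mimic Hopf's proof (Theorem~\ref{hopf}) by attaching to the immersed constant mean curvature sphere a holomorphic quadratic differential and then invoking the fact that a genus-zero Riemann surface carries no nonzero such differential. The ambient spaces in the statement are precisely the homogeneous three-manifolds $\Ek$ that fiber as a Riemannian submersion $\Ek\to\M^2(\kappa)$ over a simply connected surface of constant curvature $\kappa$, with bundle curvature $\tau$ and unit vertical Killing field $\xi$; so I would begin by recalling this classification and fixing this model.

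First I would choose a conformal parameter $z=x+iy$ on the sphere $\Sigma$, write the induced metric as $\lambda\,|dz|^2$, the unit normal as $N$, and the Hopf differential as $p\,dz^2$ with $p=\langle\sigma(\partial_z,\partial_z),N\rangle$. The new ingredient, absent in the constant-curvature case, is the vertical data: the angle function $\nu=\langle N,\xi\rangle$ and the complex quantity $A=\langle\partial_z,\xi\rangle$, so that $A^2\,dz^2$ is a well-defined quadratic differential. I would then define the Abresch--Rosenberg differential by $Q=2(H+i\tau)\,p-(\kappa-4\tau^2)\,A^2$, where the value of the constant $\kappa-4\tau^2$ is forced by the computation in the next step.

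The heart of the argument is to show that $Q\,dz^2$ is holomorphic, i.e. $Q_{\bar z}=0$, for every surface of constant mean curvature $H$ in $\Ek$. I would prove this by differentiating $Q$ and substituting the Codazzi equation of the immersion, which in $\Ek$ acquires extra terms built from the ambient curvature and from $\nu$; the point is that with the constant $\kappa-4\tau^2$ these extra terms cancel exactly against $\partial_{\bar z}$ of the term $(\kappa-4\tau^2)A^2$, once one uses the compatibility equations relating $\nu$, $A$ and the shape operator. I expect this cancellation --- a delicate computation with the full system of structure equations (Gauss, Codazzi, and the equations governing $\nu$ and $A$) of $\Ek$ --- to be the main obstacle, since it is the one place where the special homogeneous geometry is genuinely used.

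With holomorphicity in hand the conclusion is immediate at the topological level: since $\Sigma$ has genus zero, the space of holomorphic quadratic differentials on it is trivial, so $Q\equiv0$. It then remains to extract geometry from the pointwise identity $2(H+i\tau)\,p=(\kappa-4\tau^2)\,A^2$. This rigidly couples the Hopf differential to the vertical part of the immersion; feeding it back into the Gauss--Codazzi system reduces $\Sigma$ to a solution of a system that is invariant under the one-parameter group of ambient isometries rotating about a vertical geodesic. A uniqueness argument for this system then identifies $\Sigma$ with a rotationally invariant sphere, which is the desired conclusion.
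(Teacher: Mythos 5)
The paper does not actually prove this statement: Theorem~\ref{abro} is quoted as background and attributed to the original papers of Abresch and Rosenberg, and the introduction only records that its proof ``relies on the existence of a holomorphic quadratic differential.'' Your sketch is precisely that argument --- the Abresch--Rosenberg differential $Q=2(H+i\tau)\,p-(\kappa-4\tau^2)A^2$ on a surface in $E(\kappa,\tau)$, its holomorphicity via the Codazzi and compatibility equations, and its forced vanishing on a genus-zero surface --- so it coincides with the approach of the cited proof rather than with anything argued in this paper.

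The one place your outline is genuinely thin is the final step. Passing from the pointwise identity $2(H+i\tau)\,p=(\kappa-4\tau^2)A^2$ to rotational invariance is not a formality: it is roughly half of Abresch and Rosenberg's work, and consists in showing that the compatibility equations together with $Q\equiv 0$ force the angle function $\nu$ and the principal curvatures to satisfy an ODE system along the gradient flow of $\nu$, integrating that system, and matching the result against the explicitly constructed rotationally invariant spheres (whose existence for the relevant range of $H$ must also be established). As written, ``a uniqueness argument for this system then identifies $\Sigma$'' is an assertion standing in for that analysis, so you should either carry it out or cite it; everything before it is the correct and standard route.
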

When the ambient space is an arbitrary homogeneous three-manifold
$X$, the type of description of constant mean curvature spheres
given by the above theorems is no longer possible due to the lack of
ambient rotations in $X$. Because of this, the natural way to
describe constant mean curvature spheres in that general setting is
to parameterize explicitly the moduli space of these spheres, and to
determine their most important geometric properties.

In this manuscript we develop a theoretical framework for  studying
constant mean curvature surfaces in  any simply connected
homogeneous three-manifold $X$ that is not diffeomorphic to
$\esf^2\times \R $ (the classification of constant mean curvature
spheres in $\esf^2\times \R $ follows from Theorem~\ref{abro}). We
will apply this general theory to the classification and geometric
study of constant mean curvature spheres when $X$ is compact.
Specifically, Theorem \ref{main} below gives a classification of
constant mean curvature spheres in any homogeneous three-manifold
diffeomorphic to $\S^3$, by explicitly determining the essential
properties of such spheres with respect to their existence,
uniqueness, moduli space, symmetries, embeddedness and stability.

\begin{theorem}
\label{main} Let $X$ be a compact, simply connected homogenous
three-manifold. Then:
\ben
\item \label{exist} For every $H\in \R$,
there exists an immersed oriented sphere $S_H$ in $X$ of constant
mean curvature $H$.
\item \label{uniq} Up to ambient isometry, $S_H$
is the unique immersed sphere in $X$ with constant mean curvature
$H$.
\item \label{familySt} There exists
 a well-defined point in $X$ called
the  \emph{center of symmetry} of $S_H$
 such that the isometries of $X$ that fix this point also leave
 $S_H$  invariant.
 \item \label{alex-embed}
$S_H$ is Alexandrov embedded, in the sense that the immersion
$f\colon S_H\looparrowright X$ of $S_H$ in $X$ can be extended to an
isometric immersion $F\colon B\to X$ of a Riemannian three-ball
 such that $\partial B=S_H$ is mean convex.
\item \label{index1}
$S_H$ has index one and nullity three for the Jacobi operator.
\end{enumerate}
Moreover, let $\mathcal{M}_X$ be the set of oriented immersed
spheres of constant mean curvature in $X$ whose center of symmetry
is a given point $e\in X$. Then, ${\mathcal M}_X$ is an analytic
family $\{ S(t) \mid t \in \R\} $ parameterized by the mean
curvature value $t$ of $S(t)$.
\end{theorem}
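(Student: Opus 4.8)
The plan is to prove Theorem~\ref{main} by first establishing a structural result for the space $\XA$ of metric Lie groups (the classification tells us that a compact simply connected homogeneous three-manifold is isometric to a three-sphere $\S^3$ with a left-invariant metric, i.e. to $\su$ with a left-invariant metric, possibly after scaling). I would set up the \emph{left invariant Gauss map} of an oriented immersed surface $f\colon \Sigma\looparrowright X$, which sends each point to the left-translation of its unit normal to the Lie algebra $\algG$, identified with a unit sphere. The key analytic engine is the \emph{$H$-potential}, an explicit function on the Gauss sphere encoding the mean curvature operator, whose properties reduce the PDE for constant mean curvature $H$ immersions of a sphere to a quasilinear elliptic equation with a prescribed (degree-reproducing) Gauss map.

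\smallskip

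First I would prove existence (part~\ref{exist}). The natural route is a continuity/degree argument: consider the set of values $H\in\R$ for which a constant mean curvature $H$ sphere exists, show it is nonempty (small $H$ near minimal-type spheres, or large $H$ via small spheres whose mean curvature blows up), and then show it is both open and closed. Openness follows from the implicit function theorem once we know the linearization (the Jacobi operator) is suitably invertible modulo its kernel; closedness requires uniform area and curvature estimates for these spheres, so that a sequence cannot degenerate or escape to infinity—here compactness of $X$ helps, and one needs \emph{a priori} curvature estimates of the type advertised in the keywords. For uniqueness (part~\ref{uniq}), the decisive tool is \emph{Hopf uniqueness} via a holomorphic quadratic differential (an Abresch--Rosenberg-type differential) adapted to the homogeneous metric: one constructs a holomorphic differential on any constant mean curvature sphere, concludes it vanishes identically (a holomorphic differential on $\S^2$ is zero), and deduces that the Gauss map is a conformal diffeomorphism, which rigidifies the immersion up to the ambient isometry group.

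\smallskip

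For the symmetry statement (part~\ref{familySt}), once uniqueness is in hand the \emph{center of symmetry} should be defined so that the stabilizer subgroup of ambient isometries fixing that point must preserve $S_H$: since $S_H$ is unique up to isometry, any isometry fixing the appropriate distinguished point (e.g.\ the image of a balancing/center-of-mass construction invariant under that stabilizer) sends $S_H$ to a congruent, hence equal, sphere. Alexandrov embeddedness (part~\ref{alex-embed}) would follow by continuity from the round case together with the mean-convexity of the boundary, deforming the filling isometric immersion $F\colon B\to X$ along the analytic family and checking that the immersed ball persists. The index and nullity statement (part~\ref{index1}) is proved by analyzing the Jacobi operator: the nullity is at least three because the three-dimensional isometry group of $X$ generates three Jacobi fields (the normal components of the Killing fields), and one shows these span the kernel; the index equals one by a stability comparison using that $S_H$ bounds a mean-convex ball and by relating the index to the Morse-theoretic behavior of the area functional along the family.

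\smallskip

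Finally, the moduli statement—that $\calm_X$ is an analytic family $\{S(t)\mid t\in\R\}$ parameterized by the mean curvature—is essentially the combination of existence, uniqueness, and the nondegeneracy encoded by nullity exactly three. Fixing the center of symmetry $e$ removes the translational ambiguity, and the implicit function theorem applied to the constant mean curvature equation (with the linearized operator having a kernel matching the nullity, which is absorbed by the isometry normalization) yields a local real-analytic parameterization by $H=t$; uniqueness globalizes this to all of $\R$. I expect the \textbf{main obstacle} to be the closedness step in the existence argument, namely establishing uniform \emph{a priori} geometric bounds (area and curvature estimates) that prevent degeneration of a family of constant mean curvature spheres as $H$ varies, since unlike the space-form or four-dimensional-isometry-group cases there is no ambient rotational symmetry to exploit; one must instead rely on the homogeneity of $X$ and the analytic structure of the $H$-potential to rule out collapse.
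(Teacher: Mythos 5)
Your skeleton (continuity method in $H$, openness by the implicit function theorem, closedness via a priori estimates, Jacobi fields from Killing fields for the nullity) matches the paper's architecture, but two of your key steps rest on ideas that either fail or are missing.

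First, uniqueness. You propose to construct a \emph{holomorphic} Abresch--Rosenberg-type quadratic differential on any $H$-sphere and conclude it vanishes. The paper explicitly rejects this route: when the isometry group of $X$ is three-dimensional there is no known holomorphic quadratic differential, and the authors state that the holomorphic-differential approach ``does not seem to work'' in this setting. What the paper actually does (following Daniel--Mira) is logically reversed from your sketch: it first proves that the left invariant Gauss map of an \emph{index-one} $H$-sphere $S_H$ is a diffeomorphism onto $\S^2$ --- via a nodal-set argument (Bers/Cheng) applied to the Jacobi function $\langle N,F\rangle$ of a right invariant field, comparing $S_H$ with an auxiliary invariant surface having second-order contact --- and only then uses that diffeomorphism to build a quadratic differential $Q_H$ on an arbitrary $H$-sphere. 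This $Q_H$ is \emph{not} holomorphic; one shows $|(Q_H)_{\overline z}|/|Q_H|$ is locally bounded, so its zeros are isolated of negative index, which is impossible on $\S^2$ unless $Q_H\equiv 0$, and $Q_H\equiv 0$ characterizes left translates of $S_H$. Without the diffeomorphism step your uniqueness argument has no starting point.

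Second, the closedness step. You correctly identify the a priori area estimates as the main obstacle, but you offer no mechanism beyond ``rely on the homogeneity of $X$ and the analytic structure of the $H$-potential,'' which is not an argument. The paper's mechanism is the heart of the proof: if areas are unbounded, a limiting process produces a complete \emph{stable} constant mean curvature surface whose Gauss map has rank one everywhere (rank two would contradict injectivity of the Gauss maps of the approximating spheres), hence the limit is invariant under the flow of a right invariant vector field on $\su$; its integral curves are closed of bounded length, so the image is a torus or an annulus of at most linear area growth (hence parabolic), and in either case the bounded, sign-changing Jacobi function $\langle V,N\rangle$ coming from a second right invariant field contradicts stability. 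Similarly, your curvature estimate is obtained in the paper by a blow-up producing a minimal surface in $\R^3$ whose Gauss map is locally orientation-reversing, contradicting item (1) of Theorem~\ref{thm:index1}. Finally, your ``center of mass'' definition of the center of symmetry is not viable as stated since $S_H$ need not be embedded; the paper instead transplants the isotropy group $I_e^+(X)$ to a group of isometries of the Alexandrov filling ball $B$ and invokes a fixed-point theorem for compact subgroups of ${\rm Diff}^+$ of the ball to locate the distinguished interior point.
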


Every compact,  simply connected homogeneous three-manifold is
isometric to the Lie group $\su =\{ A\in \mathcal{M}_2(\C )\ | \
A^t\overline{A}=I_2, \ \det (A)=1\} $ endowed with a left invariant
metric. There exists a three-dimensional family of such homogeneous
manifolds, which includes the three-spheres $\S^3(c)$ of constant
sectional curvature $c>0$ and the two-dimensional family of
rotationally symmetric \emph{Berger spheres}, which have a
four-dimensional isometry group. Apart from these two more symmetric
families, any other  left invariant metric on $\su$ has a
three-dimensional isometry group, with the isotropy group of any
point being isomorphic to $\Z_2\times\Z_2$. Item~(\ref{familySt}) in
Theorem~\ref{main} provides the natural generalization of the
theorems by Hopf and Abresch-Rosenberg to this more general context,
since it implies that any constant mean curvature sphere $S_H$ in
such a space inherits all the ambient isometries fixing some point;
in particular, $S_H$ is round (resp. rotationally invariant) in
$\S^3(c)$ (resp. in Berger spheres).

Items~(\ref{exist}) and (\ref{uniq}) together with the last
statement of Theorem~\ref{main} provide an explicit description of
the moduli space of constant mean curvatures spheres in any compact,
simply connected homogeneous three-manifold $X$.
Items~(\ref{alex-embed}) and (\ref{index1}) in Theorem~\ref{main}
describe general embeddedness and stability type properties of
constant mean curvature spheres in $X$ which are essentially sharp,
as we explain next. In the manifolds $\S^3(c)$, the constant mean
curvature spheres are round; hence, they are embedded and weakly
stable (see Section~\ref{sec:index1} for the definition of weak
stability). However, for a general homogeneous $X$ diffeomorphic to
$\S^3$, the constant mean curvature spheres need not be embedded
(Torralbo~\cite{tor1} for certain ambient Berger spheres) or weakly
stable (Torralbo and Urbano~\cite{tou1} for certain ambient Berger
spheres, see also Souam~\cite{so3}), and they are not geodesic
spheres if $X$ is not isometric to some  $\S^3(c)$.  Nonetheless,
item~(\ref{alex-embed}) in Theorem~\ref{main} shows that any
constant mean curvature sphere in a general $X$ satisfies a weaker
notion of embeddedness (which is usually  called Alexandrov
embeddedness), while item~(\ref{index1}) describes the index and the
dimension of the kernel of the Jacobi operator of a constant mean
curvature sphere.

An important property of constant mean curvature spheres in $X$ not
listed in the statement of Theorem~\ref{main} is that, after
identifying $X$ with the Lie group $\su$ endowed with a left
invariant metric, the left invariant Gauss map of every constant
mean curvature sphere  in $X$ is a diffeomorphism to $\S^2$; see
Theorem~\ref{thm:index1} for this result.  This diffeomorphism
property will be crucial in the proof of Theorem~\ref{main}.

As an application of Theorem~\ref{main}, we provide in
Theorem~\ref{embed:su2} a more detailed description of the special
geometry of  minimal spheres in a general compact $X$.
In particular, it follows from Theorem~\ref{embed:su2} that minimal spheres
in $X$ are embedded; this follows from
the existence of an embedded minimal sphere in $\S^3$ endowed with
an arbitrary Riemannian metric as proved by \cite{smith1} (see also Remark~\ref{smith}), and the
fact that any two such minimal spheres in $X$ are congruent by
item~(\ref{uniq}) of Theorem~\ref{main}.

The theorems of Hopf and Abresch-Rosenberg rely on the existence of
a holomorphic quadratic differential for constant mean curvature
surfaces in homogeneous three-manifolds with isometry group of
dimension at least four. This approach using holomorphic quadratic
differentials does not seem to work when the isometry group of the
homogeneous three-manifold has dimension three. Instead, our
approach to proving Theorem~\ref{main} is inspired by two recent
works on constant mean curvature spheres in the Thurston geometry
$\sol$, i.e., in the solvable Lie group Sol$_3$ with any of its most
symmetric left invariant metrics. One of these works is the local
parameterization by Daniel and Mira~\cite{dm2} of the space
$\mathcal{M}_{\mbox{\footnotesize Sol}_3}^1$ of index-one, constant
mean curvature spheres in $\sol$ equipped with its standard metric
via the left invariant Gauss map and the uniqueness of such spheres.
The other one is the obtention by Meeks~\cite{me34} of area
estimates for the subfamily of spheres in
$\mathcal{M}_{\mbox{\footnotesize Sol}_3}^1$ whose mean curvatures
are bounded from below by any fixed positive constant; these two
results lead to a complete description of the spheres of constant
mean curvature in $\sol$ endowed with its standard metric. However,
the proof of Theorem~\ref{main} for a general compact, simply
connected homogeneous three-manifold $X$ requires the development of
new techniques and theory.  These new techniques and theory are
needed to prove that the left invariant Gauss map of an index-one
sphere of constant mean curvature in $X$ is a diffeomorphism, that
constant mean curvature spheres in $X$ are Alexandrov embedded and
have a center of symmetry, and that there exist a priori area
estimates for the family of index-one spheres of constant mean
curvature in $X$. On the other hand, some of the arguments in the
proofs of Theorem \ref{rep} and Theorem \ref{thm:index1} below are
generalizations of ideas in Daniel and Mira~\cite{dm2}, and are
therefore merely sketched here. For more specific details on these
computations, we refer the reader to the announcement of the results
of the present paper in Chapter 3 of the Lecture Notes~\cite{mpe11}
by the first and third authors.

Here is a brief outline of the proof of Theorem~\ref{main}. First we
identify the compact, simply connected homogenous three-manifold $X$
isometrically with $\su$ endowed with a left invariant metric. Next
we show that any index-one sphere $S_H$ of constant mean curvature
$H$ in $X$ has the property that any other immersed sphere of
constant mean curvature $H$ in $X$ is a left translation of $S_H$.
The next step in the proof is to show that the set $\cH$ of values
$H\in \R$ for which there exists an index-one sphere of constant
mean curvature $H$ in $X$ is non-empty, open and closed in $\R$
(hence, $\cH=\R$). That $\cH$ is non-empty follows from the
existence of small isoperimetric spheres in $X$. Openness of $\cH$
follows from an argument using the implicit function theorem, which
also proves that the space of index-one spheres with constant mean
curvature in $X$ modulo left translations is an analytic
one-dimensional manifold. By elliptic theory, closedness of $\cH$
can be reduced to obtaining a priori area and curvature estimates
for index-one, constant mean curvature spheres with any fixed upper
bound on their mean curvatures. The existence of these curvature
estimates is obtained by a rescaling argument. The most delicate
part of the proof of Theorem~\ref{main} is obtaining a priori  area
estimates. For this, we first show that the non-existence of an
upper bound on the areas of all constant mean curvature spheres in
$X$ implies the existence of a complete, stable, constant mean
curvature surface in $X$ which can be seen to be the lift via a
certain fibration $\Pi\colon X\to \S^2$  of an immersed curve in
$\S^2$, and then we prove that such a surface cannot be stable to
obtain a contradiction. The Alexandrov embeddedness of constant mean
curvature spheres follows from a deformation argument, using the
smoothness of the family of constant mean curvature spheres in $X$.
Finally, the existence of a center of symmetry for any constant mean
curvature sphere in $X$ is deduced from the Alexandrov embeddedness
and the uniqueness up to left translations of the sphere.

Even though the geometry of constant mean curvature surfaces in
homogeneous three-manifolds with an isometry group of dimension at
least four has been deeply studied, the case where the ambient space
is an arbitrary homogeneous three-manifold remains largely
unexplored. The results in this paper and the Lecture
Notes~\cite{mpe11}  seem to constitute the first systematic study of
constant mean curvature surfaces in general homogeneous
three-manifolds, as well as  a starting point for further
development of this area. It is worth mentioning that many of the
results in this paper are proven for any homogeneous, simply
connected three-manifold not diffeomorphic to $\esf^2\times \R$. In
our forthcoming paper~\cite{mmpr1} these results provide the
foundation for understanding the geometry and classification of
constant mean curvature spheres in any homogeneous three-manifold
diffeomorphic to $\R^3$.

\section{Background material on three-dimensional metric Lie groups.}
 \label{sec:background}
 We next state some basic properties of
 three-dimensional Lie groups endowed
with a left invariant metric that will be used freely in later
sections. For details of these basic properties, see Chapter 2 of
the general reference~\cite{mpe11}.

Let $Y$ denote a simply connected, homogeneous Riemannian
three-manifold, and  assume that it is not isometric to the
Riemannian product  $\S^2(\kappa)\times \R$, where $\kappa>0$ is the
Gaussian curvature of $\S^2(\kappa)$. Then $Y$ is isometric to a
simply connected, three-dimensional Lie group $G$ equipped with a
left invariant metric $\langle ,\rangle $, i.e., for every $p\in G$,
the left translation $l_p\colon G\to G$, $l_p(q)=p\, q$, is an
isometry of $\langle ,\rangle $. We will call such a space a
\emph{metric Lie group}, and denote it by $X$. For the underlying
Lie group structure of such an $X$, there are two possibilities,
unimodular and non-unimodular.

\subsection{$X$ is unimodular.}
\label{secunimodgr} Among all simply connected, three-dimensional
Lie groups, the cases $\su$, $\sl$ (universal cover of
$\mathrm{SL}(2,\R) $), $\EE $ (universal cover of the Euclidean
group of orientation preserving rigid motions of the plane), $\sol$
(Sol geometry, or the universal cover of the group of orientation
preserving rigid motions of the Lorentzian plane), Nil$_3$
(Heisenberg group of upper triangular $3\times3$ real matrices) and
$\R^3$ comprise the \emph{unimodular} Lie groups.

Suppose that $X$ is a simply connected, three-dimensional unimodular
Lie group  equipped with a left invariant metric $\langle ,\rangle
$. It is always possible to find an orthonormal left invariant frame
$\{E_1,E_2,E_3\}$ such that
\begin{equation}
\label{eq:11}
 [E_2,E_3]=c_1E_1,\quad [E_3,E_1]=c_2E_2, \quad
[E_1,E_2]=c_3E_3,
\end{equation}
for certain constants $c_1,c_2,c_3\in \R$, among which  at most one
$c_i$  is negative. The triple of numbers $(c_1,c_2,c_3)$ and
$\{E_1,E_2,E_3\}$ are called   the {\it structure constants}  and
the \emph{canonical basis} of the unimodular metric Lie group $X$,
respectively. The following associated constants are also useful in
describing the geometry of a simply connected, three-dimensional
unimodular metric Lie group:

\begin{equation} \label{def:mu}
\mu _1=\frac{1}{2}(-c_1+c_2+ c_3),\quad  \mu _2=\frac{1}{2}(c_1-c_2+
c_3),\quad  \mu _3=\frac{1}{2}(c_1+c_2-c_3).
\end{equation}
For instance, the Levi-Civita connection associated to $\langle,
\rangle $ is given by
\begin{equation}
\label{eq:LCunim} \nabla _{E_i}E_j=\mu _i\, E_i\times E_j,\quad
i,j\in \{ 1,2,3\} ,
\end{equation}
where $\times $ denotes the cross product associated to $\langle
,\rangle $ and to the orientation on $X$ defined by declaring
$(E_1,E_2,E_3)$ to be a positively oriented basis. From here it is
easy to check that the symmetric Ricci tensor associated to $\langle
,\rangle $ diagonalizes in the basis $\{ E_1,E_2,E_3\} $ with
eigenvalues (see Milnor~\cite{mil2} for details)
\begin{equation}
\label{Ricunimod}
 \mbox{Ric}(E_1)=2\mu _2\mu _3, \quad
\mbox{Ric}(E_2)=2\mu _1\mu _3,
 \quad \mbox{Ric}(E_3)=2\mu _1\mu _2.
\end{equation}

 Depending on the signs of the
structure constants $c_i$, we obtain six possible different Lie
group structures, which are listed  in the table below together with
the possible dimension of the isometry group $\mbox{Iso}(X)$ for a
given left invariant metric on the Lie group $X$.
 \par
\vspace{.1cm}
 \begin{center}

 \begin{tabular}{|c|c|c|c|}
        \hline
        Signs of $c_1,c_2,c_3$ & $\dim \mbox{Iso}(X)=3$ & $\dim
        \mbox{Iso}(X)=4$
        & $\dim \mbox{Iso}(X)=6$ \\ \hline
\rule{0cm}{.4cm}
        +,\ +,\ + & $\su$ & $\esf^3_{\mbox{\tiny Berger}}
        =\E (\kappa >0,\tau ) $ & $\esf^3(\kappa )$ \\
\rule{0cm}{.4cm}
        +,\ +,\ -- & $\sl $ & $\E (\kappa <0,\tau )$ & \mbox{\O }\\
\rule{0cm}{.4cm}
        +,\ +,\ 0 & $\widetilde{\mbox{\rm E}}(2)$ & \mbox{\O }
        & $(\widetilde{\mbox{\rm E}}(2),\mbox{flat})$ \\
\rule{0cm}{.4cm}
        +,\ --,\ 0 & Sol$_3$ & \mbox{\O } & \mbox{\O }\\
\rule{0cm}{.4cm}
        +,\ 0,\ 0 & \mbox{\O } & Nil$_3=\E (0,\tau )$ & \mbox{\O }  \\
\rule{0cm}{.4cm}
        0,\ 0,\ 0 & \mbox{\O } & \mbox{\O } & $\R^3$ \\
        \hline
      \end{tabular}
      \\ \mbox{}
\end{center}
{\sc Table 1.} Three-dimensional, simply connected unimodular metric
Lie groups. Each horizontal line corresponds to a unique Lie group
structure; when all the structure constants are different, the
isometry group of $X$ is three-dimensional. If  two  or more
constants agree, then the isometry group of $X$ has dimension $4$ or
$6$. We have used the standard notation $\E (\kappa ,\tau )$ for the
total space of the Riemannian submersion with bundle curvature $\tau
$ over a complete simply connected surface of constant curvature
$\kappa $.
\par
\vspace{.2cm} Fix a unimodular metric Lie group $X$ with underlying
Lie group $Z$ and left invariant metric $\langle ,\rangle $. Let
$E_1,E_2,E_3$ be a ($\langle ,\rangle $-orthonormal) canonical basis
of $X$. Now pick numbers $\l _1,\l _2,\l _3>0$ and declare the
length of $E_i$ to be $\l _i$, while keeping them orthogonal. This
defines uniquely a left invariant metric $\langle ,\rangle '$ on
$Z$, and every left invariant metric on $Z$ can be described with
this procedure (see e.g., the discussion following Corollary~4.4
in~\cite{mil2}).
It turns out that $E_1,E_2,E_3$ are Ricci eigendirections for every
$\langle ,\rangle '$, although the associated Ricci eigenvalues
depend on the $\l _i$ and so, on $\langle ,\rangle '$.

For $i=1,2,3$, the integral curve $\G _i$ of $E_i$ passing through
the identity element $e\in Z$ is a 1-parameter subgroup of $Z$.

\begin{proposition}
\label{propos3.2}
 In the above situation, each $\G _i$ is the fixed
point set of an order-two, orientation preserving isomorphism $\phi
_i\colon Z\to Z$ which we will call the {\em rotation of angle $\pi
$ about $\G _i$}. Moreover, the following properties hold:
\begin{enumerate}[(1)]
\item $\phi _i$ leaves invariant each of the collections of left cosets of
$\G_j$, $j=1,2,3$.
\item For every left invariant metric $\langle ,\rangle '$ of $Z$,
$\G _i$ is a geodesic and $\phi _i$ is an isometry.
\end{enumerate}
\end{proposition}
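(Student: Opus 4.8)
The plan is to obtain each $\phi_i$ by first constructing the corresponding involution of the Lie algebra $\mathfrak{z}$ of $Z$ and then integrating it, using that $Z$ is simply connected. For fixed $i$, define the linear map $\psi_i\colon\mathfrak{z}\to\mathfrak{z}$ by $\psi_i(E_i)=E_i$ and $\psi_i(E_j)=-E_j$ for $j\neq i$. A direct check using only the bracket relations~\eqref{eq:11} shows that $\psi_i$ preserves all three brackets, so it is a Lie algebra automorphism; it is clearly an involution with $\det\psi_i=(+1)(-1)(-1)=1$. Since $Z$ is connected and simply connected, $\psi_i$ integrates to a unique Lie group automorphism $\phi_i\colon Z\to Z$ with $d(\phi_i)_e=\psi_i$. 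By uniqueness, $\phi_i^2=\mathrm{id}$ (its differential at $e$ is $\psi_i^2=\mathrm{id}$), so $\phi_i$ has order two; and since $Z$ is connected and $d(\phi_i)_e=\psi_i$ preserves orientation, $\phi_i$ is orientation preserving.

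Next I would prove the isometry statement in~(2), from which the geodesic property of $\G_i$ will follow. The key observation is that $\psi_i$ sends each $E_j$ to $\pm E_j$, hence preserves both orthogonality and the individual lengths for every left invariant metric $\langle,\rangle'$ obtained by rescaling the $E_j$ by factors $\lambda_j>0$; thus $d(\phi_i)_e=\psi_i$ is a linear isometry of $(\mathfrak{z},\langle,\rangle')$. Because $\phi_i$ is a homomorphism, $\phi_i\circ l_p=l_{\phi_i(p)}\circ\phi_i$, and differentiating at $e$ yields $d(\phi_i)_p=d(l_{\phi_i(p)})_e\circ\psi_i\circ(d(l_p)_e)^{-1}$. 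As left translations are isometries of the left invariant metric and $\psi_i$ is a linear isometry, each $d(\phi_i)_p$ is a linear isometry, so $\phi_i$ is an isometry of $\langle,\rangle'$ for every choice of the $\lambda_j$. Then $\G_i$ is a geodesic for $\langle,\rangle'$ because it is a connected component of the fixed-point set of the isometry $\phi_i$ (see below), and such a component is totally geodesic; being one-dimensional, $\G_i$ is a geodesic.

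It remains to identify the fixed-point set and to establish~(1). Since $\phi_i(\exp(tE_i))=\exp(t\,\psi_i(E_i))=\exp(tE_i)$, the subgroup $\G_i$ is fixed pointwise. Conversely, the fixed-point set $Z^{\phi_i}$ is a closed subgroup whose Lie algebra is $\ker(\psi_i-\mathrm{id})=\R E_i$, so its identity component is exactly $\G_i$; as $\phi_i$ has finite order and $Z$ is simply connected, $Z^{\phi_i}$ is connected and therefore equals $\G_i$. For~(1), the homomorphism property gives $\phi_i(\G_j)=\exp(\R\,\psi_i(E_j))=\exp(\R E_j)=\G_j$ for each $j$, whence $\phi_i(p\,\G_j)=\phi_i(p)\,\phi_i(\G_j)=\phi_i(p)\,\G_j$; this shows that $\phi_i$ permutes the left cosets of $\G_j$. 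The step I expect to be most delicate is the global identification $Z^{\phi_i}=\G_i$: the tangent computation only gives that the component through $e$ is $\G_i$, and excluding further components relies on the connectedness of the fixed subgroup of a finite-order automorphism of a simply connected Lie group, which if preferred can instead be verified directly from the explicit models of $Z$ and $\phi_i$ in each row of Table~1.
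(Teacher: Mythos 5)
Your proof is correct, but it takes a genuinely different route from the paper's. The paper outsources the construction entirely: it cites Proposition~2.21 of~\cite{mpe11} for the existence of the order-two, orientation-preserving automorphisms $\phi_i$ that are isometries of every left invariant metric, then derives the geodesic property of $\G_i$ directly from the connection formula \eqref{eq:LCunim} (which gives $\nabla_{E_i}E_i=0$), and finally obtains $\G_i\subset\mathrm{Fix}(\phi_i)$ from uniqueness of geodesics together with $(d\phi_i)_e(E_i)_e=(E_i)_e$. You instead reconstruct the cited ingredient from scratch — the involution $\psi_i={\rm diag}(1,-1,-1)$ of the Lie algebra is an automorphism precisely because of the diagonal form of the brackets \eqref{eq:11}, and it integrates on the simply connected $Z$ — and you then reverse the logical order of the remaining steps: the inclusion $\G_i\subset\mathrm{Fix}(\phi_i)$ comes for free from $\phi_i\circ\exp=\exp\circ\,\psi_i$, and the geodesic property is deduced from the fact that components of fixed-point sets of isometries are totally geodesic, rather than from \eqref{eq:LCunim}. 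Your version is more self-contained and also supplies a proof of item~(1), which the paper's displayed argument never addresses. The one soft spot, which you flag honestly, is the reverse inclusion $\mathrm{Fix}(\phi_i)\subset\G_i$ (needed for the literal statement that $\G_i$ \emph{is} the fixed point set): the tangent computation only controls the component through $e$, and connectedness of the fixed set of an involution requires either Smith theory (the fixed set of a $\Z_2$-action on a mod-$2$ acyclic space such as $\R^3$ is connected; on $\su\cong\S^3$ it is a mod-$2$ homology sphere, hence a single circle here) or the case-by-case check you propose. It is worth noting that the paper's own displayed proof establishes only the inclusion $\G_i\subset\mathrm{Fix}(\phi_i)$ and implicitly defers the equality to~\cite{mpe11}, so your treatment is, if anything, more explicit about where the delicacy lies.
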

\begin{proof}
In Proposition 2.21 of~\cite{mpe11} it is proved that, in the above
conditions, there exist order two, orientation-preserving
isomorphisms $\phi _i\colon Z\flecha Z$, $i=1,2,3$, such that
$(d\phi _i)_e((E_i)_e)=(E_i)_e$, and such that the pulled-back
metric  $\phi _i^*\langle ,\rangle '$ equals $\langle ,\rangle '$
for every left invariant metric $\langle ,\rangle '$ on $Z$.
Equation (\ref{eq:LCunim}) implies that the integral curve $\G _i$
of $E_i$ is a geodesic of $(Z,\langle ,\rangle ')$ (for every left
invariant metric $\langle ,\rangle '$). Since $(d\phi
_i)_e((E_i)_e)=(E_i)_e$ , we conclude from uniqueness of geodesics
that $\G_i$ consists entirely of fixed points of $\phi _i$. This
proves Proposition~\ref{propos3.2}.
\end{proof}

\subsection{$X$ is non-unimodular.}
The simply connected, three-dimensional, non-unimodular metric Lie
groups correspond to semi-direct products $X=\R^2\rtimes_A \R$ with
${\rm trace} (A)\neq 0$, as we explain next. A semi-direct product
$\R^2\rtimes_A \R$ is the Lie group $(\R^3\equiv \R^2\times \R,*)$,
where the group operation $*$ is expressed in terms of some real
$2\times2$ matrix $A\in \cM_2(\R)$ as
\begin{equation}
\label{eq:5}
 ({\bf p}_1,z_1)*({\bf p}_2,z_2)=({\bf p}_1+ e^{z_1 A}\  {\bf
 p}_2,z_1+z_2);
\end{equation}
here $e^B=\sum _{k=0}^{\infty }\frac{1}{k!}B^k$ denotes the usual
exponentiation of a matrix $B\in \cM_2(\R )$. Let

\begin{equation} \label{equationgenA} A=\left(
\begin{array}{cr}
a & b \\
c & d \end{array}\right) .
\end{equation}
Then, a left invariant frame $\{ E_1,E_2,E_3\} $ of $X$ is given by
\begin{equation}
\label{eq:6*}
 E_1(x,y,z)=a_{11}(z)\partial _x+a_{21}(z)\partial _y,\quad
E_2(x,y,z)=a_{12}(z)\partial _x+a_{22}(z)\partial _y,\quad
 E_3=\partial _z,
\end{equation}
where
\begin{equation}
\label{eq:exp(zA)}
 e^{zA}=\left(
\begin{array}{cr}
a_{11}(z) & a_{12}(z) \\
a_{21}(z) & a_{22}(z)
\end{array}\right) .
\end{equation}
In terms of $A$, the Lie bracket relations are:
\begin{equation}
\label{eq:8a} [E_1,E_2]=0, \quad [E_3,E_1]=aE_1+cE_2, \quad
 [E_3,E_2]=bE_1+dE_2.
 \end{equation}

\begin{definition}
 \label{def2.1}
 {\rm
We define the {\it canonical left invariant metric} on the
semidirect product $\R^2\rtimes _A\R $ to be that one for which the
left invariant basis $\{ E_1,E_2,E_3\} $ given by (\ref{eq:6*}) is
orthonormal. Equivalently, it is the left invariant extension to
$X=\R^2\rtimes _A\R $ of the inner product on the tangent space
$T_eX$ at the identity element $e=(0,0,0)$ that makes $(\partial
_x)_e,(\partial _y)_e,(\partial _z)_e$ an orthonormal basis.}
\end{definition}

We next emphasize some other metric properties of the canonical left
invariant metric $\langle ,\rangle $ on $\R^2\rtimes_A \R$:

\begin{enumerate}[$\bullet $]
\item The mean curvature of each leaf of the foliation $\mathcal{F}=
\{ \R^2\rtimes _A\{ z\} \mid z\in \R \}$ with respect to the unit
normal vector field $E_3$ is the constant $H=\mbox{trace}(A)/2$. All
the leaves of the foliation $\mathcal{F}$ are intrinsically flat.
\item The change from the orthonormal basis $\{ E_1,E_2,E_3\} $ to the
basis $\{ \partial _x,\partial _y,\partial _z\} $ given by
(\ref{eq:6*}) produces the following expression for the metric
$\langle ,\rangle $ in the $x,y,z$ coordinates of $X$:
\begin{equation}
\label{eq:13}
 \left.
\begin{array}{rcl}
\langle ,\rangle & =&
 \left[ a_{11}(-z)^2+a_{21}(-z)^2\right] dx^2+
\left[ a_{12}(-z)^2+a_{22}(-z)^2\right] dy^2 +dz^2 \\
& + & \rule{0cm}{.5cm} \left[
a_{11}(-z)a_{12}(-z)+a_{21}(-z)a_{22}(-z)\right] \left( dx\otimes
dy+dy\otimes dx\right)
\\
&=& \rule{0cm}{.5cm} e^{-2\mbox{\footnotesize trace}(A)z} \left\{
 \left[ a_{21}(z)^2+a_{22}(z)^2\right] dx^2+
\left[ a_{11}(z)^2+a_{12}(z)^2\right] dy^2\right\} +dz^2 \\
& - & \rule{0cm}{.5cm}
 e^{-2\mbox{\footnotesize trace}(A)z} \left[
a_{11}(z)a_{21}(z)+a_{12}(z)a_{22}(z)\right] \left( dx\otimes
dy+dy\otimes dx\right) .
\end{array}
\right.
\end{equation}

\item The Levi-Civita connection associated to the canonical left invariant
metric is easily deduced from~(\ref{eq:8a}) as follows: {\large
\begin{equation}
\label{eq:12}
\begin{array}{l|l|l}
\rule{0cm}{.5cm} \nabla _{E_1}E_1=a\, E_3 & \nabla
_{E_1}E_2=\frac{b+c}{2}\, E_3
& \nabla _{E_1}E_3=-a\, E_1-\frac{b+c}{2}\, E_2 \\
\rule{0cm}{.5cm} \nabla _{E_2}E_1=\frac{b+c}{2}\, E_3 & \nabla
_{E_2}E_2=d\, E_3
& \nabla _{E_2}E_3=-\frac{b+c}{2}\, E_1-d\, E_2 \\
\rule{0cm}{.5cm} \nabla _{E_3}E_1=\frac{c-b}{2}\, E_2 & \nabla
_{E_3}E_2=\frac{b-c}{2}\, E_1 & \nabla _{E_3}E_3=0.
\end{array}
\end{equation}
}
\end{enumerate}

A simply connected, three-dimensional Lie group is non-unimodular if
and only if it is isomorphic to some semi-direct product
$\R^2\rtimes_A \R$ with ${\rm trace} (A)\neq 0$. If $X$ is a
non-unimodular metric Lie group, then up to the rescaling of the
metric of $X$, we may assume that ${\rm trace} (A)=2$. {\it This
normalization in the non-unimodular case will be assumed throughout
the paper}. After an orthogonal change of the left invariant frame,
we may express the matrix $A$ uniquely as
\begin{equation}
\label{Axieta} A=A(a,b)= \left(
\begin{array}{cc}
1+a & -(1-a)b\\
(1+a)b & 1-a \end{array}\right), \hspace{1cm} a,b\in [0,\infty).
\end{equation}

The \emph{canonical basis} of the non-unimodular metric Lie group
$X$ is, by definition, the left invariant orthonormal frame
$\{E_1,E_2,E_3\}$ given in \eqref{eq:6*} by the matrix $A$
in~(\ref{Axieta}). In other words, every simply connected,
three-dimensional non-unimodular metric Lie group is isomorphic and
isometric (up to possibly rescaling the metric) to $\R^2\rtimes _A\R
$ with its canonical metric, where $A$ is given by (\ref{Axieta}).

We give two examples. If $A=I_2$ where $I_2$ is the identity matrix,
we get a metric Lie group that we denote by $\H^3$, which is
isometric to the hyperbolic three-space with its standard constant
$-1$ metric and where the underlying Lie group structure is
isomorphic to that of the group of similarities of $\R^2$. If $a=1,
b=0$, we get the product space $\H^2(-4)\times \R$ , where
$\H^2(-4)$ has constant negative curvature $-4$.

Under the assumption that $A\neq I_2$, the determinant of $A$
determines uniquely the Lie group structure. This number is the
\emph{Milnor $D$-invariant} of $X=\R^2\rtimes_A \R$:
\begin{equation}
\label{Dxieta} D=(1-a^2)(1+b^2) ={\rm det} (A).
\end{equation}

Every non-unimodular group $X=\R^2\rtimes _A\R $ admits a $\pi
$-rotation about $\partial _z=E_3$, with similar properties to the
three $\pi $-rotations that appeared in Proposition~\ref{propos3.2}
for the unimodular case. The following statement is an elementary
consequence of equations (\ref{eq:13}) and (\ref{eq:12}).
\begin{proposition}
\label{propos3.5} Let $X=\R^2\rtimes _A\R $ be a non-unimodular
metric Lie group endowed with its canonical metric. Then, the
$z$-axis $\{ 0\}\rtimes _A\R $ is a geodesic and it is the fixed
point set of the order-two isomorphism, orientation preserving
isometry $\phi \colon X\to X$, $\phi (x,y,z)=(-x,-y,z)$. In
particular, given any point $p\in X$, the subgroup of
orientation-preserving isometries of $X$ that fix $p$ contains the
subgroup $\{ 1_X, l_p \circ \phi  \circ l_p^{-1}\} \cong \Z _2$.
\end{proposition}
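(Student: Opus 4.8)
The plan is to verify the three claims of Proposition~\ref{propos3.5} directly from the explicit formulas already given.The plan is to verify the three assertions directly from the explicit formulas already recorded, since each reduces to a short computation. First I would establish the geodesic claim: the $z$-axis is the integral curve of $E_3=\partial_z$ through $e$, and the last entry of (\ref{eq:12}) gives $\nabla_{E_3}E_3=0$, so this curve is a geodesic of the canonical metric. Next I would show that $\phi(x,y,z)=(-x,-y,z)$ is an order-two, orientation-preserving isometric automorphism whose fixed-point set is exactly the $z$-axis. The fixed-point computation is immediate: $\phi(x,y,z)=(x,y,z)$ forces $x=y=0$. That $\phi^2=\mathrm{id}$ is clear, and since in the $(x,y,z)$ coordinates $\phi$ is the linear map $\mathrm{diag}(-1,-1,1)$, its Jacobian has determinant $+1$, so $\phi$ is orientation preserving.

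To see that $\phi$ is a group automorphism I would substitute into the product rule (\ref{eq:5}): writing $p=(\mathbf p_1,z_1)$ and $q=(\mathbf p_2,z_2)$, one has $\phi(p*q)=(-\mathbf p_1-e^{z_1A}\mathbf p_2,\,z_1+z_2)$, while $\phi(p)*\phi(q)=(-\mathbf p_1+e^{z_1A}(-\mathbf p_2),\,z_1+z_2)$, and these agree because $e^{z_1A}$ is linear and hence commutes with multiplication by $-1$ on $\R^2$; note this step is valid for any matrix $A$, not only the normalized form (\ref{Axieta}). For the isometry property I would invoke the explicit metric (\ref{eq:13}): its coefficients depend only on $z$, which $\phi$ leaves unchanged, and each of the surviving terms $dx^2$, $dy^2$, $dz^2$, and $dx\otimes dy+dy\otimes dx$ is invariant under the replacement $(dx,dy,dz)\mapsto(-dx,-dy,dz)$ induced by $\phi$. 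Hence $\phi^*\langle,\rangle=\langle,\rangle$. Alternatively and more conceptually, since $\partial_x,\partial_y,\partial_z$ are orthonormal at $e$ by Definition~\ref{def2.1} and $d\phi_e=\mathrm{diag}(-1,-1,1)$ is a linear isometry of $T_eX$, the standard fact that an automorphism of a left-invariant metric is a global isometry precisely when its differential at $e$ preserves the inner product yields the claim.

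For the final assertion, fix $p\in X$ and set $\psi_p:=l_p\circ\phi\circ l_p^{-1}$. Each factor is an isometry of $X$: left translations are isometries of the left-invariant metric by construction, and $\phi$ was just shown to be one, so $\psi_p$ is an isometry. It is orientation preserving because $\phi$ is and because left translations on the connected group $X$ preserve orientation, being homotopic to $\mathrm{id}$ along a path from $e$ to $p$. Since $\phi(e)=e$, we get $\psi_p(p)=l_p(\phi(l_p^{-1}(p)))=l_p(\phi(e))=l_p(e)=p$, so $\psi_p$ fixes $p$. Finally $\psi_p^2=l_p\circ\phi^2\circ l_p^{-1}=\mathrm{id}$ while $\psi_p\neq\mathrm{id}$, so $\{1_X,\psi_p\}\cong\Z_2$ is a subgroup of orientation-preserving isometries fixing $p$, as claimed.

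There is no serious obstacle here; the only point requiring a moment's thought is the isometry property of $\phi$, where one must either read off the invariance of the coefficient functions and the quadratic forms from (\ref{eq:13}), or justify the reduction to checking that $d\phi_e$ preserves the inner product at the identity. Everything else is a direct substitution into (\ref{eq:5}) and (\ref{eq:12}).
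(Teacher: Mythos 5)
Your proof is correct and follows exactly the route the paper intends: the paper states the proposition is ``an elementary consequence of equations (\ref{eq:13}) and (\ref{eq:12})'' without writing out details, and your verification (geodesic from $\nabla_{E_3}E_3=0$ in (\ref{eq:12}), automorphism from (\ref{eq:5}), isometry from the $z$-dependence and sign-invariance of the coefficients in (\ref{eq:13}), then conjugation by $l_p$) is precisely that elementary computation carried out in full.
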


\begin{remark}
  {\rm In the case that the isometry group of a metric Lie group $X$ has dimension three,
  then the orientation-preserving isometries that fix $e$ reduce to the ones that appear
  in Propositions~\ref{propos3.2} and~\ref{propos3.5}. In other words, the group of
orientation-preserving isometries of $X$ that fix a given point
$p\in X$ is
  either isomorphic to $\Z_2\times \Z _2$ (when $X$ is unimodular), or to $\Z_2$ (when
  $X$ is non-unimodular); see \cite[Proposition~2.21]{mpe11}.
  }
\end{remark}

\section{The left invariant Gauss map of constant mean curvature surfaces in
metric Lie groups.} \label{sec:gauss} Throughout this paper, by an
\emph{$H$-surface} we will mean an immersed, oriented surface of
constant mean curvature surface $H\in \R$. By an \emph{$H$-sphere}
we will mean an $H$-surface diffeomorphic to $\S^2$.

It is classically known that the Gauss map of an $H$-surface
$\Sigma$ in $\R^3$ with $H\neq0$ is a harmonic map into $\S^2$, and
that  $\Sigma$  is determined up to translations by its Gauss map.
We next give an extension of this result to the case of an arbitrary
metric Lie group $X$, after exchanging the classical Gauss map by
the left invariant Gauss map $G$ which we define next.

\begin{definition}
\label{defG} {\rm Given an oriented immersed surface $f\colon \Sigma
\looparrowright X$ with unit normal vector field $N\colon \Sigma \to
TX$ (here $TX$ refers to the tangent bundle of $X$), we define the
{\it left invariant Gauss map} of the immersed surface to be the map
$G\colon \Sigma \to \esf^2\subset T_eX$ that assigns to each $p\in
\Sigma $ the unit tangent vector to $X$ at the identity element $e$
given by $(dl_{f(p)})_e(G(p))=N_p$. }
\end{definition}

Observe that the left invariant Gauss map of a two-dimensional
subgroup of $X$ is constant.

We will prove that, after stereographically projecting the left
invariant Gauss map $G$ of an $H$-surface in $X$ from the south
pole\footnote{Here, the south pole of $\esf^2$ is defined in terms
of the canonical basis $\{E_1,E_2,E_3\}$ of $X$.} of the unit sphere
$\esf^2\subset T_eX$, the resulting function $g\colon \Sigma \to
\overline{\C}=\C \cup \{ \infty \} $ satisfies a conformally
invariant elliptic PDE that can be expressed in terms of the
\emph{$H$-potential} of the space $X$, that we define next:

\begin{definition}
\label{Def:HP-non} {\rm Let $X$ be a \underline{non-unimodular}
metric Lie group. Rescale the metric on $X$ so that $X$ is isometric
and isomorphic to $\R^2\rtimes _A\R $ with its canonical metric,
where $A\in \mathcal{M}_2(\R )$ is given by~(\ref{Axieta}) for
certain constants $a,b \geq 0$. Given $H\in\R$, we define the {\it
$H$-potential} of $X$ to be the map $R\colon \overline{\C}\to
\overline{\C} $ given by
\begin{equation}
\label{eq:potentialnonunim}
 R(q)=H\left( 1+|q|^2\right)^2-(1-|q|^4)-a\left(
q^2-\overline{q}^2\right) -ib \left( 2|q|^2-a\left(
q^2+\overline{q}^2\right) \right) ,
\end{equation}
where $\overline{q}$ denotes the complex conjugate of $q\in
\overline{\C }$.}
\end{definition}
\vspace{.1cm}

\begin{definition}
\label{Def:HP-uni} {\rm Let $X$ be a \underline{unimodular} metric
Lie group with structure constants $c_1,c_2,c_3$ defined by
equation~(\ref{eq:11}) and let $\mu _1,\mu _2,\mu _3\in \R$ be the
related numbers defined in~(\ref{def:mu}) in terms of
$c_1,c_2,c_3$. Given $H\in\R $, we define the {\it $H$-potential} of
$X$  as the map $R\colon \overline{\C}\to \overline{\C} $ given by
\begin{equation}
\label{eq:potentialunim}
 R(q)=H\left(
1+|q|^2\right)^2-\frac{i}{2}\left(
 \mu_2|1+q^2|^2+\mu _1|1-q^2|^2+4\mu _3|q|^2\right) .
\end{equation}
}
\end{definition}
Note that, $R(q)/|q|^4$ has a finite limit as $q\to \8$ (in
particular, $R(q)$ blows up when $q\to \8$). We will say that the
$H$-potential $R$ for $X$ has a zero at $q_0=\8\in \overline{\C}$ if
$\lim_{q\to\8} R(q)/|q|^4=0$. A simple analysis of the zeros of the
$H$-potential $R$ in \eqref{eq:potentialnonunim} and
\eqref{eq:potentialunim} gives the following lemma.

\begin{lemma}
\label{nzp} Let $X$ be a metric Lie group and $H\in \R$. Then, the
$H$-potential for $X$ is everywhere non-zero if and only if:
\begin{enumerate}[(1)]
\item $X$ is isomorphic to $\su$, or
\item $X$ is not isomorphic to $\su$, is unimodular and $H\neq0$, or
\item $X$ is non-unimodular with $D$-invariant $D\leq 1$ and $|H|>1$, or
\item $X$ is non-unimodular with $D$-invariant $D>1$ and $|H|\neq 1$.
\end{enumerate}
\end{lemma}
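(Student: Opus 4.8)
The plan is to separate the $H$-potential $R$ into its real and imaginary parts, observe that $R(q)=0$ forces both to vanish simultaneously, and treat the value $q=\infty$ through the finite limit of $R(q)/|q|^4$. I would split the argument according to whether $X$ is unimodular or non-unimodular, since the defining formulas \eqref{eq:potentialunim} and \eqref{eq:potentialnonunim} have quite different structure.

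First, the unimodular case. Here $\Re R=H(1+|q|^2)^2$, while $\Im R$ is the negative of the manifestly real quantity $\tfrac12\big(\mu_2|1+q^2|^2+\mu_1|1-q^2|^2+4\mu_3|q|^2\big)$. When $H\neq 0$ the real part is nonzero for every finite $q$, and since $\lim_{q\to\infty}R(q)/|q|^4=H-\tfrac{i}{2}c_3$ has nonzero real part, $R$ is everywhere nonzero; this settles item~(2) and item~(1) for $\su$ with $H\neq 0$. The crux is $H=0$, where $R$ vanishes exactly where $\Im R$ does. Writing $(n_1,n_2,n_3)\in\esf^2$ for the point corresponding to $q$ under inverse stereographic projection, I would use the identities $|1-q^2|^2=(1-n_1^2)(1+|q|^2)^2$, $|1+q^2|^2=(1-n_2^2)(1+|q|^2)^2$ and $4|q|^2=(1-n_3^2)(1+|q|^2)^2$ (with the appropriate labelling of coordinates), together with $\mu_2+\mu_3=c_1$, $\mu_1+\mu_3=c_2$, $\mu_1+\mu_2=c_3$, to obtain
\[
\Im R=-\tfrac12(1+|q|^2)^2\left(c_1n_1^2+c_2n_2^2+c_3n_3^2\right).
\]
Thus, for $H=0$, $R$ is everywhere nonzero exactly when the diagonal quadratic form $c_1n_1^2+c_2n_2^2+c_3n_3^2$ never vanishes on the connected sphere $\esf^2$, i.e. exactly when it is sign-definite, i.e. when all the $c_i$ share the same strict sign. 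As at most one $c_i$ is negative, this forces $c_1,c_2,c_3>0$, which is precisely the $\su$ case, completing item~(1).

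Next, the non-unimodular case. Factoring gives $\Re R=(1+|q|^2)\big[(H-1)+(H+1)|q|^2\big]$, so for finite $q$ it vanishes exactly on the locus $|q|^2=\tfrac{1-H}{1+H}$ when this ratio is non-negative, while $\Im R=-2a\,\Im(q^2)-2b|q|^2+2ab\,\Re(q^2)$. If $|H|>1$ the ratio is negative, so $\Re R$ has no finite zero, and $\lim_{q\to\infty}R(q)/|q|^4=H+1\neq 0$; hence $R$ is everywhere nonzero, giving item~(3) and the $|H|>1$ part of item~(4). If $|H|=1$ one exhibits an explicit zero: $R(0)=H-1=0$ when $H=1$, and the limit at $\infty$ equals $H+1=0$ when $H=-1$, so $|H|=1$ is always excluded. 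In the remaining range $|H|<1$, zeros can occur only on the circle $|q|=\rho$ with $\rho^2=\tfrac{1-H}{1+H}>0$, and there, setting $q=\rho e^{i\theta}$, the condition $\Im R=0$ reduces to $ab\cos 2\theta-a\sin 2\theta=b$. The left-hand side has amplitude $a\sqrt{1+b^2}$, so a solution exists if and only if $b^2\leq a^2(1+b^2)$; using the identity $D-1=b^2-a^2(1+b^2)$ read off from \eqref{Dxieta}, this is equivalent to $D\leq 1$. Hence for $|H|<1$ the potential $R$ is everywhere nonzero exactly when $D>1$, which together with the $|H|>1$ discussion yields items~(3) and~(4).

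The routine part is the bookkeeping over the six unimodular sign patterns and the boundary values $|H|=1$ and $D=1$, together with the initial real/imaginary splitting. I expect the genuinely non-obvious steps to be the two exact reductions: in the unimodular case, the stereographic identity collapsing $\Im R$ into the clean diagonal form $\sum_i c_i n_i^2$ on $\esf^2$, which makes the sign-definiteness criterion transparent; and in the non-unimodular case, reducing the simultaneous vanishing of $\Re R$ and $\Im R$ to a single amplitude condition on the critical circle and recognizing, via $D-1=b^2-a^2(1+b^2)$, that the solvability threshold is exactly the Milnor $D$-invariant. Getting the stereographic normalizations and the amplitude computation correct is where the real work lies.
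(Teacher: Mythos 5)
Your proof is correct. The paper itself gives no argument for Lemma~\ref{nzp} beyond the remark that ``a simple analysis of the zeros'' of \eqref{eq:potentialnonunim} and \eqref{eq:potentialunim} yields it, and your computation --- splitting $R$ into real and imaginary parts, reducing $\Im R$ in the unimodular case to the definiteness of $c_1n_1^2+c_2n_2^2+c_3n_3^2$ on $\S^2$, and in the non-unimodular case reducing solvability on the circle $|q|^2=\frac{1-H}{1+H}$ to the amplitude condition $b^2\leq a^2(1+b^2)$, i.e.\ $D\leq 1$ --- is a complete and correct instance of exactly that analysis, with all cases (including $q=\infty$ and $|H|=1$) handled properly.
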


We use next Lemma \ref{nzp} to prove the following fact, that will
be used later on:

\begin{fact}
\label{fact3} Assume that there exists a compact $H$-surface
$\Sigma$ in $X$. Then, the $H$-potential of $X$ is everywhere
non-zero.
\end{fact}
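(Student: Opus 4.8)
The plan is to prove the contrapositive: assuming that the $H$-potential of $X$ vanishes somewhere, I will show that $X$ contains no compact $H$-surface. By Lemma~\ref{nzp}, the hypothesis that $R$ has a zero means that exactly one of the following holds: $X$ is unimodular but not isomorphic to $\su$ and $H=0$; $X$ is non-unimodular with $D\le 1$ and $|H|\le 1$; or $X$ is non-unimodular with $D>1$ and $|H|=1$. In each of these cases $X$ is not isomorphic to $\su$, hence $X$ is diffeomorphic to $\R^3$ and in particular non-compact; so mere non-compactness is not the point, and the argument must exploit the special value of $H$. The mechanism I would use is a foliation of $X$ by complete (and therefore non-compact) $H$-surfaces together with the maximum principle.

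The key observation is that a zero of $R$ produces such a foliation. After stereographic projection, the constant function $g\equiv q_0$ solves the elliptic PDE governing the left invariant Gauss map of an $H$-surface (Theorem~\ref{rep}) precisely when $R(q_0)=0$; integrating this constant-Gauss-map solution yields a complete $H$-surface $P\subset X$ with left invariant Gauss map identically $q_0$, and by homogeneity the left translates $\{l_a(P)\mid a\in X\}$ sweep out $X$, foliating it by congruent complete $H$-surfaces. In the non-unimodular case this is completely explicit: writing $X=\XA$ with $\mathrm{trace}(A)=2$, one has $R(0)=H-1$, and the zero at $H=1$ corresponds to the canonical coset foliation $\{\R^2\rtimes_A\{z\}\mid z\in\R\}$, whose leaves are intrinsically flat and have constant mean curvature $\mathrm{trace}(A)/2=1$ with respect to $E_3$ (see \eqref{eq:13}); the remaining zeros of $R$ with $|H|<1$ give the analogous foliations by complete equidistant-type $H$-surfaces. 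In the unimodular case with $H=0$, the surface $P$ is a minimal two-dimensional subgroup: for the solvable groups $\sol$, $\EE$, $\mathrm{Nil}_3$ and $\R^3$ this is again a coset foliation, while for $\sl$ it is the minimal foliation by preimages of the geodesics of the base $\H^2$ under the canonical fibration.

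With such a foliation $\{P_t\}$ in hand, suppose for contradiction that $\Sigma$ is a compact $H$-surface. By compactness of $f(\Sigma)$, among the leaves meeting $f(\Sigma)$ there is an extreme one $P_{t_0}$ that is tangent to $\Sigma$ at a point and leaves $\Sigma$ locally on one side; there the two surfaces are tangent, carry the same constant mean curvature $H$ with compatible normals, and $\Sigma$ lies on one side, so the strong maximum principle forces $\Sigma$ to agree with $P_{t_0}$ near the contact point, and analytic unique continuation together with connectedness gives $\Sigma\subseteq P_{t_0}$, which is absurd since $P_{t_0}$ is non-compact. I expect the main obstacle to lie precisely at this threshold value of $H$. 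Comparing $\Sigma$ with the leaves at its highest and lowest points (for the non-unimodular foliation, using that the leaves have mean curvature $1$) only yields, through the ordinary comparison of mean curvatures for tangent surfaces, the \emph{weak} inequality $|H|\ge 1$, which is saturated exactly in the borderline cases $|H|=1$ (and likewise gives no contradiction at $H=0$ in the unimodular case). Upgrading this to a genuine contradiction requires the strong maximum principle and unique continuation to turn the tangential contact into global coincidence with a non-compact leaf, and it requires verifying that $\{P_t\}$ really foliates $X$ with the correct one-sided behavior so that an extreme tangent leaf is guaranteed to exist.
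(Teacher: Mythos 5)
Your overall strategy (negate Lemma~\ref{nzp} and rule out each bad pair $(X,H)$ by a maximum-principle comparison) is sound, and in the non-unimodular case your last paragraph essentially arrives at the paper's argument: the weak mean-curvature comparison of a compact $H$-surface with the coset foliation $\{\R^2\rtimes_A\{z\}\mid z\in\R\}$, whose leaves all have mean curvature $1$, already forces $|H|\ge 1$, and the borderline $|H|=1$ is eliminated by the strong maximum principle; this makes all of your ``equidistant-type'' foliations for $|H|<1$ unnecessary. The genuine gap is in the mechanism you propose for manufacturing foliations. You cannot integrate the constant solution $g\equiv q_0$ with $R(q_0)=0$ via Theorem~\ref{rep}: that theorem assumes $g_z\neq 0$ everywhere \emph{and} that $R$ does not vanish on $g(\Sigma)$, and both hypotheses fail here (the conformal factor \eqref{formulanda} degenerates, and $\eta$ in \eqref{eq:Ai} is a $0/0$ expression). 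Worse, the underlying claim is false: a surface with constant left invariant Gauss map $q_0$ exists precisely when the orthogonal complement of $q_0$ in the Lie algebra is a two-dimensional subalgebra, which is strictly stronger than $R(q_0)=0$. For instance, for $\sol$ with its standard metric the $0$-potential is a nonzero multiple of $i\,{\rm Re}(q^2)$, whose zero set is a one-dimensional subset of $\overline{\C}$, while $\sol$ has only finitely many (minimal) two-dimensional subgroups. So ``zero of $R$ at $q_0$ $\Rightarrow$ foliation by $H$-surfaces with Gauss map $q_0$'' does not hold.

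The second gap is the unimodular case $H=0$. Your coset foliations are fine for the trace-zero semidirect products ($\sol$, $\EE$, ${\rm Nil}_3$, $\R^3$), but for $\sl$ with an \emph{arbitrary} left invariant metric the claimed ``minimal foliation by preimages of geodesics of the base under the canonical fibration'' is not justified: that picture is tied to the $\E(\kappa,\tau)$ metrics, and for a general left invariant metric the fibration is not Riemannian and the preimages of geodesics need not be minimal. (The remark following the paper's proof notes one could instead use a minimal two-dimensional subgroup, but its existence for every left invariant metric on $\sl$ still requires an argument.) The paper sidesteps all of this with a foliation-free argument: given a compact minimal $\hat{\Sigma}$ with $e\in\hat{\Sigma}$, pick a nearby second point $p\in\hat{\Sigma}$, let $\G=\exp(\R v)$ be the $1$-parameter subgroup with $\exp v=p$ (a proper arc, since $X$ is diffeomorphic to $\R^3$), and left-translate $\hat{\Sigma}$ along $\G$ to the last time $t_0$ of contact; the maximum principle gives $l_{\G(t_0)}(\hat{\Sigma})=\hat{\Sigma}$, hence $l_{\G(2t_0)}(\hat{\Sigma})=\hat{\Sigma}$, contradicting the maximality of $t_0$. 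To complete your proof you would need either to adopt that sliding argument or to actually produce a minimal foliation (equivalently, a minimal two-dimensional subgroup) of $\sl$ for every left invariant metric.
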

\begin{proof}
If $X$ is non-unimodular, then the horizontal planes $\R^2\rtimes_A
\{z_0\}\subset X=\R^2\rtimes_A \R$ produce a foliation of $X$ by
leaves of constant mean curvature $1$; recall that the matrix $A$ is
given by (\ref{Axieta}). Applying the usual maximum principle to
$\Sigma $ and to the leaves of such foliation we deduce that
$|H|>1$. So, by Lemma \ref{nzp}, the $H$-potential for $X$ does not
vanish in this case.

If $X$ is unimodular and not isomorphic to $\su$, then $X$ is
diffeomorphic to $\R^3$. Suppose $\hat{\Sigma} $ is a compact
minimal surface in $X$. After a left translation, we can assume that
$e\in \hat{\Sigma} $. Choose $\ve >0$ so that the Lie group
exponential map $\exp $ restricts to the ball $\B (\vec{0},\ve
)\subset T_eX$ of radius $\ve $ centered at $\vec{0}\in T_eX$ as a
diffeomorphism into $B=\exp (\B (\vec{0},\ve ))$. Let $p$ be a point
in $(\hat{\Sigma} \cap B)-\{ e\} $ and denote by $\G =\exp (\R v)$
the 1-parameter subgroup of $X$ generated by the unique $v\in \B
(\vec{0},\ve )-\{ \vec{0}\} $ such that $\exp v=p$. As $\G $ is a
proper arc in $X$ and $\hat{\Sigma} $ is compact, there exists a
largest $t_0\in [1,\infty )$ such that $l_{\G (t_0)}(\hat{\Sigma}
)\cap \hat{\Sigma} \neq \mbox{\O}$, where $\G (t)=\exp (tv)$, $t\in
\R $.  Applying the maximum principle for minimal surfaces to
$\hat{\Sigma}$ and $l_{\G (t_0)}(\hat{\Sigma} )$ we deduce that
$\hat{\Sigma} =l_{\G (t_0)}(\hat{\Sigma} )$. By applying $l_{\G
(t_0)}$ again, we have $\hat{\Sigma} =l_{\G (2t_0)}(\hat{\Sigma} )$,
which contradicts the defining property of $t_0$. Therefore, there
are no compact minimal surfaces in $X$, and so $H\neq 0$. By Lemma
\ref{nzp}, the $H$-potential for $X$ is everywhere non-zero.
Finally, if $X$ is isomorphic to $\su$, the same conclusion holds
again by Lemma \ref{nzp}. This proves Fact \ref{fact3}.
\end{proof}

\begin{remark}
{\rm
  If $X$ is unimodular and  not isomorphic to $\su$, then $X$ is either isometric and
  isomorphic to $\R^2\rtimes _A\R $ for a matrix $A\in \mathcal{M}_2(\R )$ with trace
  zero, or to $\sl $. In both cases, it is not difficult to check that there exists a
  minimal two-dimensional subgroup; hence the argument in the first paragraph of the last
  proof could be also adapted to this situation to prove the non-existence of a compact
  immersed minimal surface in $X$.
}
\end{remark}

\begin{theorem}\label{rep}
Suppose $\Sigma$ is a simply connected Riemann surface with
conformal parameter $z$, $X$ is a metric Lie group, and $H\in\R$.

Let $g\colon\Sigma\flecha \overline{\C}$  be a solution of the
complex elliptic PDE
\begin{equation}\label{gauss}
g_{z\overline{z}} = \frac{R_q}{R} (g) \, g_z g_{\overline{z}} +
\left(\frac{R_{\overline{q}}}{R} -
\frac{\overline{R_q}}{\overline{R}}\right) (g)\, |g_z|^2,
\end{equation}
such that $g_z\neq 0$ everywhere\footnote{By $g_z\neq 0$ we mean
that $g_z(z_0)\neq 0$ if $g(z_0)\in \C$ and that $\lim_{z\to z_0}
(g_z/g^2)(z)\neq 0$ if $g(z_0)=\8$.}, and such that the
$H$-potential $R$ of $X$ does not vanish on $g(\Sigma)$ (for
instance, this happens if $H$ satisfies the conditions of
Lemma~\ref{nzp}). Then, there exists an immersed $H$-surface
$f\colon\Sigma\looparrowright X$, unique up to left translations,
whose Gauss map is $g$.

Conversely, if $g\colon\Sigma\flecha \cb$ is the Gauss map of an
immersed $H$-surface $f\colon\Sigma\looparrowright X$ in a metric
Lie group $X$, and the $H$-potential $R$ of $X$ does not vanish on
$g(\Sigma)$, then $g$ satisfies the equation \eqref{gauss}, and
moreover $g_z\neq 0$ holds everywhere.
\end{theorem}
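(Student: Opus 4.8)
The plan is to prove this as a representation theorem of Weierstrass--Kenmotsu type, passing through the Maurer--Cartan form of $X$. I would treat the two implications in parallel, since in both the central object is the $\algG$-valued $1$-form $\omega = (l_{f^{-1}})_*\, df$ (the pullback by $f$ of the Maurer--Cartan form of $X$), which encodes the immersion up to left translation: because $\Sigma$ is simply connected, a map $f\colon\Sigma\flecha X$ with $f^{-1}df=\omega$ exists, and is unique up to left translation, precisely when $\omega$ satisfies the Maurer--Cartan (structure) equation $d\omega + \frac12[\omega,\omega]=0$. The whole theorem thus reduces to identifying \eqref{gauss} with this integrability condition for an appropriately prescribed $\omega$.

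For the converse direction I would start from an immersed $H$-surface $f$, fix the conformal parameter $z$, write the induced metric as $\lambda^2|dz|^2$, and express the left invariant Gauss map $G$ (Definition~\ref{defG}) through its stereographic projection $g$, so that $G=\frac{1}{1+|g|^2}(2\Re g,\,2\Im g,\,|g|^2-1)$ in the canonical basis $\{E_1,E_2,E_3\}$. Decomposing $\nabla_{f_z}N$ into tangential and normal parts using the Levi-Civita connection of $X$ (given by \eqref{eq:LCunim} in the unimodular case and \eqref{eq:12} in the non-unimodular case), the constancy of $H$ feeds into the Codazzi equation and, after a direct computation organized through the two expressions \eqref{eq:potentialunim} and \eqref{eq:potentialnonunim} for $R$, yields exactly \eqref{gauss}. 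The same computation produces an algebraic relation of the form $\lambda = |g_z|\,\Phi(g)/|R(g)|$ for an explicit positive function $\Phi$ of $g$; since $f$ is an immersion we have $\lambda>0$, and because $R$ does not vanish on $g(\Sigma)$ this forces $g_z\neq0$ everywhere.

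For the direct direction I would reverse this construction. Given a solution $g$ of \eqref{gauss} with $g_z\neq0$ and $R(g)\neq0$, define $\lambda>0$ by the algebraic relation above, prescribe the tangential part of $\omega$ (equivalently $f_z$) and the normal direction through $g,\,g_z,\,g_{\overline{z}},\,\lambda$ and the canonical basis, and then verify that the resulting $\omega$ satisfies the Maurer--Cartan equation. This is the heart of the matter: the structure equation splits into its $\algG$-components, and a computation using the bracket relations \eqref{eq:11} (resp.\ \eqref{eq:8a}) shows that every component is a consequence of \eqref{gauss} together with the defining relation for $\lambda$. Simple connectivity of $\Sigma$ then integrates $\omega$ to an immersion $f$, unique up to left translation; the nonvanishing of $g_z$ and $R(g)$ guarantees $\lambda>0$, hence that $f$ is a genuine immersion, and one checks directly that its mean curvature is the constant $H$ and its left invariant Gauss map is $g$.

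The main obstacle is this Maurer--Cartan verification: one must check that the single scalar equation \eqref{gauss}, phrased uniformly through the $H$-potential $R$, really does encode all the components of the structure equation for every admissible Lie group structure at once. This is exactly where the unified role of $R$ pays off, reducing the separate connection computations (the rows of Table~1 and the non-unimodular family) to one verification; the arguments here are a direct generalization of those of Daniel and Mira~\cite{dm2} for $\sol$, and are therefore only sketched. A minor but necessary point throughout is the behavior at points where $g=\8$: there one works in the coordinate $1/g$, using that $R(q)/|q|^4$ extends continuously to $q=\8$ and interpreting $g_z\neq0$ as $\lim(g_z/g^2)\neq0$ as in the footnotes, and checks that both the formula for $\lambda$ and equation \eqref{gauss} are invariant under this change of chart.
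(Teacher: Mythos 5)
Your proposal is correct and follows essentially the same route as the paper: the paper also reduces both directions to integrating the $(1,0)$-part $\cA=f^{-1}f_z$ of the pulled-back Maurer--Cartan form (prescribed explicitly from $g$, $g_z$ and $R(g)$ in \eqref{eq:Ai}), verifies that \eqref{gauss} is equivalent to the integrability condition $\cA_{\overline{z}}-(\overline{\cA})_z=[\cA,\overline{\cA}]$, applies Frobenius on the simply connected $\Sigma$, obtains the same conformal factor $\l=4(1+|g|^2)^2|g_z|^2/|R(g)|^2$ to get an immersion, and defers the converse computation to Daniel--Mira. The only cosmetic difference is that the paper confirms the mean curvature equals $H$ by comparing the Gauss--Weingarten system for the actual mean curvature with the system derived from \eqref{gauss}, rather than invoking Codazzi directly.
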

\begin{proof}
We will only consider the case that $X$ is unimodular, since the
non-unimodular situation can be treated in a similar way.

Given a solution $g\colon\Sigma\flecha \overline{\C}$ to
\eqref{gauss} as in the statement of the theorem, define the
functions $A_1,A_2,A_3\colon\Sigma\flecha \overline{\C}$ by

\begin{equation}
\label{eq:Ai}
 A_1=\frac{\eta }{4}\left(
\overline{g}-\frac{1}{\overline{g}}\right) ,\quad
 A_2=\frac{i\eta
}{4}\left( \overline{g}+\frac{1}{\overline{g}}\right) ,\quad
 A_3=\frac{\eta }{2},
\quad \mbox{where \ } \eta =\frac{4\overline{g}g_z}{R(g)},
\end{equation}
and $z$ is a conformal parameter on $\Sigma$. Noting that
$R(q)/|q|^4$ has a finite limit as $q\to \8$, we can easily deduce
from \eqref{eq:Ai} that $A_1,A_2,A_3$ actually take values in $\C$.

A direct computation shows that
\[
(A_1)_{\overline{z}}=\frac{g_{z\overline{z}}(\overline{g}^2-1)}{R(g)}-\frac{g_z
\left[ R(g)\right]
_{\overline{z}}(\overline{g}^2-1)}{R(g)^2}+2\frac{\overline{g}
|g_z|^2}{R(g)}.
\]
Since $g$ satisfies \eqref{gauss}, the above equation yields
\begin{equation}
\label{eq:corol3.16A}
(A_1)_{\overline{z}}=\frac{|g_{z}|^2}{|R(g)|^2}\overline{\left(
2gR(g)-R_q(g)(g^2-1)\right)}.
\end{equation}
Now, equation \eqref{eq:potentialunim} implies that
\begin{equation}
\label{eq:corol3.16B} 2gR(g)-R_q(g)(g^2-1)=-2i|g|^2\left[ \left(
g+\frac{1}{g}\right) (\mu _3+iH)+ \left( \overline{g}+
\frac{1}{\overline{g}}\right) (\mu _2+iH)\right] ,
\end{equation}
where $\mu _1,\mu _2,\mu _3$ are given by (\ref{def:mu}) in terms of
the structure constants $c_1,c_2,c_3$ of the unimodular metric Lie
group $X$.

Substituting \eqref{eq:corol3.16B} into \eqref{eq:corol3.16A} and
using again \eqref{eq:Ai} we arrive to
\begin{equation}
\label{eq:corol3.16C} (A_1)_{\overline{z}}=A_2\overline{A_3}(\mu
_3-iH)-A_3\overline{A_2}(\mu _2-iH).
\end{equation}
Working analogously with $A_2,A_3$, we obtain
\begin{equation}
\label{eq:corol3.16D} \left\{
\begin{array}{l}
(A_1)_{\overline{z}}=A_2\overline{A_3}(\mu
_3-iH)-A_3\overline{A_2}(\mu _2-iH), \\
(A_2)_{\overline{z}}=A_3\overline{A_1}(\mu
_1-iH)-A_1\overline{A_3}(\mu _3-iH),
\\
\rule{0cm}{.4cm} (A_3)_{\overline{z}}=A_1\overline{A_2}(\mu
_2-iH)-A_2\overline{A_1}(\mu _1-iH).
\end{array}
\right.
\end{equation}
In particular, from (\ref{def:mu}) we have
\begin{equation}
\label{eq:corol3.16E} \left\{
\begin{array}{l}
(A_1)_z-(\overline{A_1})_z=c_1(A_2\overline{A_3}-
A_3\overline{A_2}),
\\
\rule{0cm}{.4cm} (A_2)_z-(\overline{A_2})_z=c_2(A_3\overline{A_1}-
A_1\overline{A_3}),
\\
\rule{0cm}{.4cm} (A_3)_z-(\overline{A_3})_z=c_3(A_1\overline{A_2}-
A_2\overline{A_1}).
\end{array}
\right.
\end{equation}
Therefore, if we define $\cA:=\sum_{i=1}^3 A_i e_i$ where
$e_i:=(E_i)_e$ and $e$ is the identity element in $X$, we have by
\eqref{eq:corol3.16E} that
\begin{equation}
 \label{eq:new1}
\cA_{\overline{z}} - (\overline{\cA})_z =[\cA,\overline{\cA}].
\end{equation}

We want to solve the first order PDE ${\mathcal A}:=f^{-1}f_z$ in
the unknown $f$. To do this, we will apply the classical Frobenius
Theorem. By a direct computation, we see that the formal
integrability condition $(f_z)_{\bar{z}}=(f_{\bar{z}})_z$ for the
local existence of such map $f$ is given precisely by
\eqref{eq:new1}. Thus, as $\Sigma$ is simply connected, the
Frobenius theorem ensures that there exists a smooth map
$f\colon\Sigma\flecha X$ such that $\cA=f^{-1} f_z$ and  $f$ is
unique once we prescribe an initial condition $f(z_0)=p_0\in X$.

A straightforward computation  gives that $\esiz df,df\esde =\l
|dz|^2$, where the conformal factor $\l $ is
 \begin{equation}\label{formulanda}
 \l= \frac{4(1+|g|^2)^2}{|R(g)|^2} \, |g_z|^2
\colon \Sigma\flecha [0,\8).
\end{equation}
Since $g_z\neq 0$ and $R$ does not vanish on $g(\Sigma)$, it is easy
to see that $\landa>0$ on $\Sigma$ by using that $R(q)/|q|^4$ has a
finite limit at $q=\8$ (which is non-zero, by hypothesis). Thus, $f$
is a conformally immersed surface in $X$, and it is immediate from
\eqref{eq:Ai} that the stereographically projected left invariant
Gauss map of $f$ is $g$. Also, it is clear that $f$ is unique up to
left translations in $X$.

It only remains to prove that $f$ has constant mean curvature of
value $H$. Let $\cH$ be the mean curvature function of $f$. By the
Gauss-Weingarten formulas, we have
\begin{equation}
\label{eq:3.3} \nabla _{f_{\overline{z}}}f_z=\frac{\l \cH}{2}N,
\end{equation}
where $\nabla$ is the Levi-Civita connection in $X$ and $N$ is the
unit normal field to $f$ in $X$. Thus, since $f_z=\sum_{i=1}^3 A_i
E_i$, if we let $N=\sum_{i=1}^3 N_i E_i$ then we have
\begin{equation}
\label{eq:3.6} \sum_{i=1}^3 (A_i)_{\bar{z}} E_i + \sum_{i,j=1}^3
\overline{A_i}A_j\nabla _{E_i}E_j =
 \frac{\l
\cH}{2}\sum_{i=1}^3 N_i E_i.
\end{equation}
Therefore, using (\ref{eq:LCunim}) and that $N=\frac{-2i}{\landa}
(\sum_{i=1}^3 A_i E_i)\times (\sum_{i=1}^3 \overline{A_i} E_i)$
where $\times$ is the cross product in the Lie algebra of $X$, we
conclude that
\begin{equation}
\label{corol3.16D} \left\{
\begin{array}{l}
(A_1)_{\overline{z}}=A_2\overline{A_3}(\mu
_3-i\cH)-A_3\overline{A_2}(\mu _2-i\cH), \\
(A_2)_{\overline{z}}=A_3\overline{A_1}(\mu _1-i
\cH)-A_1\overline{A_3}(\mu _3-i\cH),
\\
\rule{0cm}{.4cm} (A_3)_{\overline{z}}=A_1\overline{A_2}(\mu
_2-i\cH)-A_2\overline{A_1}(\mu _1-i\cH).
\end{array}
\right.
\end{equation}
Comparing this equation with \eqref{eq:corol3.16D}, we get $H=\cH$,
as desired.

Conversely, we need to show that the left invariant Gauss map $g$ of
any conformally immersed $H$-surface $f\colon \Sigma \looparrowright
X$, where $H$ is such that the $H$-potential of $X$ does not vanish
on $g(\Sigma )$, satisfies the elliptic PDE \eqref{gauss} and also
$g_z\neq 0$ everywhere. But this can be done just as in the proof of
Theorem~3.4 in the paper by Daniel and Mira~\cite{dm2}, using now
the general metric relations given by \eqref{eq:LCunim} instead of
the specific ones of $\sol$. We omit the details.
\end{proof}

The following application of Theorem~\ref{rep} will be useful later
on.
\begin{corollary}
\label{invacor} Let $f\colon\Sigma\looparrowright X$ be an immersed
$H$-surface with left invariant Gauss map $g\colon \Sigma \to
\overline{\C }$  and assume
that the $H$-potential for $X$ does not vanish on $g(\Sigma )$. 
Then, the differential of 
$g$ 
has rank at most $1$ everywhere on $\Sigma $ if and only if $f$ is
invariant under the flow of a right invariant vector field on $X$.
Furthermore, if $f$ is invariant under the flow of a right invariant
vector field, then the rank of the differential of $g$ is $1$
everywhere on $\Sigma $ and the Gauss map image $g(\Sigma)$ is a
regular curve in $\cb$.
\end{corollary}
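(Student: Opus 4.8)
The plan rests on two elementary facts. First, because the metric on $X$ is left invariant, every left translation $l_q$ is an isometry, and a direct check from Definition~\ref{defG} shows that replacing $f$ by $l_q\circ f$ leaves the left invariant Gauss map $g$ unchanged; thus $g$ is invariant under left translations of the surface. Second, the flow of a right invariant vector field $Y^w$ with $Y^w_e=w\neq 0$ is precisely the one-parameter group of left translations $\{l_{\exp(tw)}\}_{t\in\R}$, and such a field is nowhere vanishing on $X$. Finally, Theorem~\ref{rep} gives $g_z\neq 0$ everywhere, so $dg$ has rank at least one at every point; writing $J=|g_z|^2-|g_{\overline{z}}|^2$ for the real Jacobian of $g$, the hypothesis ``rank$\,dg\leq 1$'' is therefore equivalent to ``rank$\,dg\equiv 1$'', i.e. to $J\equiv 0$, i.e. to $|g_z|=|g_{\overline{z}}|$ (with $g_z$ interpreted at the poles of $g$ as in the footnote of Theorem~\ref{rep}). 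Throughout I keep the notation $\cA=f^{-1}f_z=\sum_i A_i\,e_i$ from the proof of Theorem~\ref{rep}, with $A_1,A_2,A_3$ given by~\eqref{eq:Ai}.

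For the implication ``invariant $\Rightarrow$ rank$\,dg\leq 1$'' (which also yields the final assertion), suppose $f$ is invariant under the flow $\phi_t=l_{\exp(tw)}$ of $Y^w$. Then $\{\phi_t\}$ induces a one-parameter group $\{\sigma_t\}$ of diffeomorphisms of $\Sigma$ with $f\circ\sigma_t=\phi_t\circ f$; its infinitesimal generator $V$ satisfies $df(V)=Y^w\circ f$, which is nowhere zero because $Y^w$ is nowhere zero and $f$ is an immersion. By the left invariance of $g$ we have $g\circ\sigma_t=g$, hence $dg(V)\equiv 0$ with $V\neq 0$, so $\ker dg$ is nontrivial and rank$\,dg\leq 1$. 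Together with $g_z\neq 0$ this gives rank$\,dg\equiv 1$, and the constant rank theorem then shows $g$ to be locally a submersion onto a one-dimensional submanifold; since $g$ is moreover constant along the nonvanishing orbits of $\{\sigma_t\}$, it descends to the one-dimensional leaf space and the image $g(\Sigma)$ is a regular curve in $\cb$.

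For the converse, assume rank$\,dg\equiv 1$, so that $\ker dg$ is a nowhere-vanishing line field along whose leaves $g$ is locally constant. Choose locally a nonvanishing section $T$ of $\ker dg$ and set $c=dz(T)$, so that the defining relation reads $g_z\,c+g_{\overline{z}}\,\overline{c}=0$; put $\xi=\cA\,c+\overline{\cA}\,\overline{c}\in\mathfrak{g}=T_eX$, so that $df(T)=(dl_f)_e(\xi)$ is the push-forward of the leaf direction. The candidate Killing field is detected by transporting $df(T)$ back to the identity by the \emph{right} translation:
\[
\zeta:=(dR_{f^{-1}})\big(df(T)\big)=\mathrm{Ad}_f(\xi)=\cB\,c+\overline{\cB}\,\overline{c}\in\mathfrak{g},\qquad \cB:=\mathrm{Ad}_f(\cA)=(dR_{f^{-1}})(f_z).
\]
Since $\zeta$ is built from the line field $\ker dg$, its projective class $[\zeta]\in\mathbb{P}(\mathfrak{g})$ is independent of the scaling of $c$. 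The key claim is that $[\zeta]$ is constant on $\Sigma$. Granting this, fix $w\in\mathfrak{g}$ representing $[\zeta]$; then at every point $\xi$ is proportional to $\mathrm{Ad}_{f^{-1}}(w)$, whence $Y^w_{f(p)}=(dR_{f(p)})_e(w)=(dl_{f(p)})_e\big(\mathrm{Ad}_{f(p)^{-1}}w\big)$ is proportional to $df_p(T_p)$ and so tangent to $f(\Sigma)$. Thus $Y^w$ is everywhere tangent to the immersed surface, and since its flow is the left-translation group $\{l_{\exp(tw)}\}$, the surface $f$ is invariant under the flow of the right invariant field $Y^w$.

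The hard part is the key claim that $[\zeta]$ is constant, and I expect it to be the main obstacle. I would prove it by a direct computation parallel to the derivation of \eqref{eq:corol3.16D}--\eqref{eq:new1} in the proof of Theorem~\ref{rep}: working with the right logarithmic derivative $\cB=\mathrm{Ad}_f(\cA)$, so that $\zeta=\cB\,c+\overline{\cB}\,\overline{c}$, I would differentiate $\zeta$ along and transverse to the leaves and substitute the structure equations satisfied by $\cB$ (the $\mathrm{Ad}_f$-transform of \eqref{eq:new1}), the rank-one relation $g_z\,c+g_{\overline{z}}\,\overline{c}=0$ together with its differentiated consequences, and the Gauss map equation~\eqref{gauss}. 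The outcome to be verified is that $\zeta_z$ and $\zeta_{\overline{z}}$ are pointwise proportional to $\zeta$, i.e. $\zeta\times\zeta_z=\zeta\times\zeta_{\overline{z}}=0$ in $\mathfrak{g}\cong\R^3$; as $\Sigma$ is connected this forces $[\zeta]$ to be constant. As in Theorem~\ref{rep} I would present only the unimodular case, the non-unimodular one being analogous with \eqref{eq:potentialnonunim} replacing \eqref{eq:potentialunim}. The one remaining point needing care is global rather than local: the line field $\ker dg$ and the parameter $c$ are only defined up to scale, but it is the projective class $[\zeta]$ that is shown to be locally constant, so no monodromy obstruction arises and constancy propagates over all of the connected surface $\Sigma$.
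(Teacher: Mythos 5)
Your treatment of the implication ``invariant $\Rightarrow$ rank$\,dg\leq 1$'' and of the final assertion (rank exactly $1$, regular curve image) is correct and matches the paper: left translations preserve the left invariant Gauss map, the flow of a nonzero right invariant field is a group of left translations, and $g_z\neq 0$ from Theorem~\ref{rep} upgrades rank $\leq 1$ to rank $\equiv 1$.

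The converse direction, however, contains a genuine gap. Your ``key claim'' --- that the projective class $[\zeta]$ of $\zeta=\mathrm{Ad}_f(\xi)$ is constant on $\Sigma$ --- is not a technical lemma on the way to the conclusion; it is \emph{equivalent} to the conclusion. Indeed, $[\zeta]$ constant equal to $[w]$ says precisely that the right invariant field $Y^w$ is everywhere tangent to $f(\Sigma)$ along the kernel of $dg$, which is the statement to be proved. You defer its verification to a computation with $\cB=\mathrm{Ad}_f(\cA)$ that you describe but do not carry out, and it is far from routine: differentiating $\mathrm{Ad}_f$ reintroduces the full Lie bracket structure, and one must combine it with the differentiated rank-one relation and equation \eqref{gauss}; nothing in your sketch certifies that the required cancellations occur. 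The paper sidesteps this entirely by invoking the \emph{uniqueness up to left translation} in Theorem~\ref{rep}: once rank$\,dg\equiv 1$ is known, one chooses a local conformal coordinate $z=s+it$ with $g=g(s)$; then $g$ extends to a vertical strip as a function of $s$ alone, Theorem~\ref{rep} produces the $H$-immersion $f(s,t)$ on that strip, and since $f(s,t+t_0)$ and $f(s,t)$ have the same Gauss map they differ by a left translation $l_{a(t_0)}$ for every $t_0$. The resulting one-parameter family of left translations is generated by a right invariant vector field, and analyticity propagates this local invariance over all of $\Sigma$. To repair your argument you would either need to actually perform and verify the $\mathrm{Ad}_f$ computation, or replace the key claim by this uniqueness argument, which is the mechanism the paper actually uses.
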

\begin{proof}
First assume that $g$ has rank less than or equal to $1$ everywhere
on $\Sigma $. Since we are assuming that the $H$-potential of $X$
does not vanish on $g(\Sigma )$, then the last statement in
Theorem~\ref{rep} implies $g_z\neq 0$. Therefore, the differential
of $g$ has rank one everywhere and $g(\Sigma)$ is a regular curve.
Around any given point $z_0\in \Sigma$, there exists a conformal
parameter $z=s+it$ such that $g=g(s)$ with $g'(s)\neq 0$, i.e.,
$g(s+it)$ does not depend on $t$. In particular, $g$ can be
considered to be defined on a vertical strip in the $(s,t)$-plane,
and by Theorem~\ref{rep} the conformally immersed $H$-surface
$f(s,t)$ with Gauss map $g$ can also be extended to be defined on
that vertical strip in the $(s,t)$-plane, so that $f(s,t+t_0)$
differs by a left translation from $f(s,t)$ for every $t_0\in \R$.
Hence, by analyticity, we see that the $\Sigma$ is invariant under
the flow of a right invariant vector field of $X$.

The converse implication is trivial, since any surface invariant
under the flow of a right invariant vector field is locally obtained
by left translating in $X$ some regular curve on the surface, and so
the differential of its left invariant Gauss map has rank at most 1
everywhere.
\end{proof}

\section{Index-one spheres in  metric Lie groups.}
\label{sec:index1}

The \emph{Jacobi operator} of an immersed
 two-sided hypersurface $M$ with constant
 mean curvature in a
Riemannian manifold $\overline{M}^{n+1}$ is defined as
\begin{equation}
\label{eq:Jacobi}
\mathcal{L}=\Delta +|\sigma |^2+\mbox{Ric}(N).
\end{equation}
Here, $\Delta $ is the Laplacian of $M$ in the induced metric,
$|\sigma |^2$ the square of the norm of the second fundamental form
of $M$ and $N$ a unit vector field on the hypersurface.

A domain $\Omega \subset M$ with compact closure is said to be {\it
stable} if $-\int _Mu\mathcal{L}u\geq 0$ for all compactly supported
smooth functions $u\in C^{\infty }_0(\Omega )$, or equivalently, for
all functions in the closure $H_0^1(\Omega )$ of $C^{\infty
}_0(\Omega )$ in the usual Sobolev norm. $\Omega $ is called {\it
strictly unstable} if it is not stable.

The {\it index} of a domain $\Omega \subset M$ with compact closure
is the number of negative eigenvalues of $-\mathcal{L}$ acting on
functions of $H_0^1(\Omega )$; thus, $\Omega $ is stable if and only
if its index is zero. If zero is an eigenvalue of $-\mathcal{L}$ on
$H_0^1(\Omega )$, then the {\it nullity} of $\Omega $ is the
(finite) dimension of the eigenspace associated to this zero
eigenvalue. Since the index of stability is non-decreasing with the
respect to the inclusion of subdomains of $M$ with compact closure,
one can define the {\it index of stability} of $M$ as the supremum
of the indices over any increasing sequence of subdomains $\Omega
_i\subset M$ with compact closure and $\cup _i\Omega _i=M$.

If $M$ is compact, then $M$ is called {\it weakly stable} if $-\int
_Mu\mathcal{L}u\geq 0$ for all $u\in C^{\infty }(M)$ with $\int
_Mu=0$. Every solution to the classical isoperimetric problem is
weakly stable, and every compact, weakly stable constant mean
curvature surface has index zero or one for the stability operator.
We refer the reader to the handbook~\cite{mpr19} for the basic
concepts and results concerning stability  properties of constant
mean curvature hypersurfaces in terms of the Jacobi operator.

Since every metric Lie group $X$ is orientable, then the
two-sidedness of a surface in $X$ is equivalent to its
orientability. It is easy to see that the index of an $H$-sphere
$S_H$ in $X$ is at least one; indeed, if $\{F_1,F_2,F_3\}$ denotes a
basis of right invariant vector fields of $X$ (that are Killing,
independently of the left invariant metric on $X$), then the
functions $u_i=\esiz F_i,N\esde$, $i=1,2,3$, are \emph{Jacobi
functions} on $S_H$, i.e., $\mathcal{L}u_i=0$. Since right invariant
vector fields on $X$ are identically zero or never zero and spheres
do not admit a nowhere zero tangent vector field, the functions
$u_i=\esiz F_i,N\esde$, $i=1,2,3$, are linearly independent. Hence,
$0$ is an eigenvalue of $-\cL$ of multiplicity at least three. As
the first eigenvalue is simple, then $0$ is not the first eigenvalue
of $-\cL$ and thus, the index of $S_H$ is at least one. Moreover, if
the index of $S_H$ is exactly one, then it follows from Theorem~3.4
in Cheng~\cite{cheng1} (see also~\cite{dm2,ross2})
 that ${\rm Ker} (\cL)$ has dimension three, and hence we conclude that
\begin{equation} \label{nul} {\rm
Ker} (\cL)={\rm Span }\{u_1,u_2,u_3\}. \end{equation}

Our next goal is to describe some structure results for  the space
of index-one spheres in a metric Lie group.

\begin{theorem}
\label{thm:index1} Any index-one $H$-sphere $S_H$ in a
three-dimensional, simply-connected metric Lie group $X$ satisfies:
\ben[(1)]
\item The left invariant Gauss map of $S_H$ is an orientation preserving diffeomorphism
to $\S^2$.

\item   $S_H$ is unique up to left translations among
$H$-spheres in $X$.

\item $S_H$ lies inside a real-analytic family $\{S_{H'} \mid H'\in
(H-\varepsilon , H+\varepsilon )\}$ of index-one spheres in $X$ for
some $\ve>0$, where $S_{H'}$ has constant mean curvature of value
$H'$. Furthermore, for $H$ sufficiently large, the range of values
of the mean curvature for the maximal real analytic family $\{
S_{H'}\} _{H'}$ containing $S_H$ contains the interval $(H-\ve
,\infty )$. In particular, there exists a unique component
$\mathcal{C}$ of the space of index-one spheres with constant mean
curvature in $X$ such that the values of the mean curvatures of the
spheres in $\mathcal{C}$ are not bounded from above.

\item For any $H_0\geq 0$, there exists a constant $A(H_0)>0$ such that the
norm of the second fundamental form of every index-one $H'$-sphere
in $X$ with $0\leq |H'|\leq H_0$ is bounded from above by $A(H_0)$.

\item If the map $\cH\colon\mathcal{C}\to \R$ that assigns to each sphere in $\mathcal{C}$ its mean curvature
is not surjective, there exists a certain $h_0(X)\geq 0$ such that
for every $H > h_0(X)$ there is an index-one sphere $S_H\in
\mathcal{C}$, and the areas of any sequence $\{ S_{H_n}\} _n\subset
\mathcal{C}$ with $H_n\searrow h_0(X)$ satisfy
 $\lim_{n\to \infty } {\rm Area} (S_{H_n}) =\8$.
In particular, if $\cH$ is not surjective, then
$\cH(\cC)=(h_0(X),\infty)\subset (0,\infty)$.
\item If $X$ is not isomorphic to
$\su$, then it does not admit any compact immersed minimal surfaces,
and thus, $\cH$ is not surjective in this case.
 \een
\end{theorem}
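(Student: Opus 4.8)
The plan is to prove the six items roughly in the stated order, with item~(1) as the keystone on which the rest depend. For item~(1), I would first show that the left invariant Gauss map $G\colon S_H\to\S^2$ is everywhere a local diffeomorphism, and then promote this to a global diffeomorphism. Since $S_H$ is compact, Fact~\ref{fact3} guarantees that the $H$-potential does not vanish on $G(S_H)$, so the converse part of Theorem~\ref{rep} gives $g_z\neq0$ everywhere for the stereographic projection $g$ of $G$ in any conformal parameter. The critical points of $G$ are exactly the zeros of the Jacobian $|g_z|^2-|g_{\bar z}|^2$, and by Corollary~\ref{invacor} the rank of $dG$ cannot drop to $1$ on all of $S_H$, as a sphere cannot be invariant under the flow of a right invariant vector field. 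The crucial input is the index-one hypothesis: using that $\ker(\cL)$ is spanned exactly by the three Jacobi functions $u_i=\langle F_i,N\rangle$ coming from the right invariant Killing fields (see~\eqref{nul}), I would relate the sign of the Jacobian of $G$ to the second variation and argue that a critical point of $G$ would create either a fourth independent Jacobi field or a nodal configuration incompatible with index one; this is the step I expect to be the main obstacle. Once $G$ is a local diffeomorphism of the sphere $S_H$ onto $\S^2$, it is a covering map, hence a diffeomorphism because $\S^2$ is simply connected, and the constant sign of the Jacobian shows it is orientation preserving.

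Item~(2) would then follow from item~(1): since left translations do not alter the left invariant Gauss map, it suffices to prove that any $H$-sphere has the same Gauss map as $S_H$ up to a conformal reparametrization, after which the uniqueness half of Theorem~\ref{rep} identifies it with a left translate of $S_H$. Using that $G$ is a global diffeomorphism, I would reparametrize by the Gauss map and reduce the claim to a rigidity statement for solutions of~\eqref{gauss} on $\S^2$ that represent immersed spheres, obtained by a continuation argument in the spirit of Daniel and Mira~\cite{dm2}. For item~(3), the point is that the nullity of $\cL$ is exactly three and those Jacobi fields are precisely the infinitesimal left translations, so the moduli space of $H$-spheres modulo left translations is cut out transversally near $S_H$; the implicit function theorem in a suitable Banach space of immersions yields a real-analytic one-parameter family $\{S_{H'}\}$ parametrized by the mean curvature, and index one persists by continuity of the eigenvalues of $\cL$. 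The existence of a unique component $\cC$ unbounded above, and its containing $(H-\ve,\infty)$ for large $H$, follows by combining this local statement with the existence of small isoperimetric (hence index-one) spheres of arbitrarily large mean curvature and the a priori estimates below.

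For item~(4) I would argue by contradiction and rescaling: given index-one $H_n'$-spheres with $|H_n'|\le H_0$ but $\sup|\sigma|\to\infty$, rescale the ambient metric Lie group so that the maximum of $|\sigma|$ equals $1$. The rescaled spaces converge to a simply connected limit metric Lie group (possibly $\R^3$), the rescaled mean curvatures tend to $0$, and the surfaces subconverge to a complete, non-flat minimal surface of bounded curvature with $|\sigma|=1$ at the origin and index at most one. Ruling out such a limit surface, using the stability arguments, gives the desired bound $A(H_0)$. Item~(5) is then a compactness consequence: by item~(3) the component $\cC$ is a connected real-analytic arc parametrized by its mean curvature and unbounded above, so if $\cH$ is not surjective its image is an interval $(h_0(X),\infty)$ with $h_0(X)\ge0$; were the areas along some sequence $H_n\searrow h_0(X)$ to stay bounded, the curvature estimates of item~(4) would force smooth subconvergence to an index-one $h_0(X)$-sphere and allow the family to be continued past $h_0(X)$, contradicting the definition of $h_0(X)$. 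Hence ${\rm Area}(S_{H_n})\to\infty$ and $\cH(\cC)=(h_0(X),\infty)$.

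Finally, for item~(6), when $X$ is not isomorphic to $\su$ I would reuse the maximum-principle argument already carried out in Fact~\ref{fact3}: in the non-unimodular case one slides the mean-convex horizontal leaves $\R^2\rtimes_A\{z\}$, and in the remaining unimodular cases one slides a minimal two-dimensional subgroup, to conclude that $X$ admits no compact immersed minimal surface. In particular $0\notin\cH(\cC)$, so $\cH$ is not surjective, and together with item~(5) this gives $\cH(\cC)=(h_0(X),\infty)\subset(0,\infty)$. The overarching difficulty is item~(1): all of the uniqueness, the transversal structure of the moduli space, and the compactness underlying the area dichotomy presuppose that the index-one condition forces the left invariant Gauss map to be a global diffeomorphism.
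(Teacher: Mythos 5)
Your outline reproduces the paper's overall architecture (item (1) first via local-diffeomorphism plus covering space theory, then (2) from (1), (3) by the implicit function theorem with nullity three, (4) by blow-up, (5) by compactness, (6) by the maximum principle), but it has a genuine gap exactly at the step you flag as ``the main obstacle,'' and that step is the keystone of the whole theorem. You correctly reduce item (1) to showing that a critical point of the left invariant Gauss map $\cG$ is incompatible with index one, but the mechanism that produces the contradiction is missing. The paper's construction is concrete: at a point $z_0$ where $d\cG$ has rank one (rank zero is excluded since $g_z\neq 0$ by Theorem~\ref{rep}), choose conformal coordinates $z=x+iy$ with $\cG_x(z_0)=0$, solve the ODE obtained by restricting the Gauss map equation \eqref{gauss} to functions of $y$ alone with the same $1$-jet as $\cG$ at $z_0$, and invoke Theorem~\ref{rep} and Corollary~\ref{invacor} to produce a comparison $H$-surface $\widehat f$, invariant under the flow of a right invariant Killing field $F$, having second order contact with $S_H$ at that point. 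Then $u=\langle N,F\rangle$ is a not-identically-zero Jacobi function on $S_H$ satisfying $u(z_0)=u_z(z_0)=0$, so by Bers' nodal structure theorem its zero set near $z_0$ consists of at least two transversally intersecting arcs, contradicting Cheng's result that on an index-one sphere the nodal set of a Jacobi function is a simple closed analytic curve. Without this comparison-surface construction, the ``fourth Jacobi field or bad nodal configuration'' you allude to does not materialize, and items (2)--(5) all rest on (1).

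There is a second genuine problem in your route to item (4). After rescaling so that $\max|\sigma|=1$, you obtain a minimal limit surface in flat $\R^3$ (the limit Lie group is always abelian $\R^3$, since the structure constants scale to zero) with $|\sigma|=1$ at the origin and index at most one, and you propose to rule this out ``using the stability arguments.'' This cannot work: complete minimal surfaces in $\R^3$ of bounded curvature and index one exist (the catenoid, Enneper's surface), so index or stability considerations alone yield no contradiction. The paper's contradiction instead comes from item (1): the limit is a minimal graph with Gaussian curvature $-1$ at the origin, so its classical Gauss map is orientation \emph{reversing} near that point, whereas the differentials of the left invariant Gauss maps of the rescaled spheres converge to it and are orientation \emph{preserving}. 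Your treatments of items (3), (5) and (6) are essentially the paper's; item (2) is only sketched, and the actual mechanism there is the Daniel--Mira quadratic differential $Q_H$ built from the diffeomorphism $\cG$ and the $H$-potential, killed on a sphere by the Poincar\'e--Hopf theorem, rather than a continuation argument.
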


\begin{proof}
We will divide the proof into  steps. Let $S_H$ denote an index-one
$H$-sphere in $X$.

\vspace{0.2cm}

{\bf Step 1:} \emph{The left invariant Gauss map of $S_H$ is an
orientation-preserving diffeomorphism onto $\overline{\C}$ (hence
item~(1) of the theorem holds).}
\begin{proof}[Proof of Step~1]
Let $\cG\colon S_H\equiv \overline{\C} \to \overline{\C}$ be the
left invariant Gauss map of $S_H$, which we view as a conformal
immersion $f\colon\overline{\C}\looparrowright X$. Observe that the
$H$-potential $R$ for $X$ does not vanish, by Fact \ref{fact3}.

We first prove that $\cG$ is a diffeomorphism. By elementary
covering space theory, it suffices to check that $\cG$ is a local
diffeomorphism. Arguing by contradiction, assume this condition
fails at a point $z_0\in S_H$, and observe that $d\cG(z_0)\neq 0$
since $\cG$ is nowhere antiholomorphic  by Theorem~\ref{rep} (also
recall that the $H$-potential for $X$ satisfies $R(\cG (z_0))\neq 0$
by Fact~\ref{fact3}).
Therefore, there exists a conformal coordinate $z=x+iy$ near $z_0$
so that $\cG_x (z_0)=0$ and $\cG_y (z_0)\neq 0$. We may assume up to
conformal reparameterization of $S_H$ that $z_0=0$.

Consider the second order ODE given by specializing  \eqref{gauss}
to functions of the real variable $y$, i.e.,
\begin{equation}
\label{eq:thm3.14*}
\widehat{g}_{yy}=\frac{R_q}{R}(\widehat{g})(\widehat{g}_y)^2+\left(
\frac{R_{\overline{q}}}{R}-\frac{\overline{R_q}}{\overline{R}}\right)
(\widehat{g})|\widehat{g}_y|^2.
\end{equation}

Let $\widehat{g}=\widehat{g}(y)$ be the (unique) solution of
(\ref{eq:thm3.14*}) with initial conditions $\widehat{g}(0)=\cG(0)$,
$\widehat{g}_y(0)=\cG_y(0)$. From Theorem~\ref{rep}, there exists a
conformally immersed $H$-surface $\widehat{f}$ in $X$ with left
invariant Gauss map $\widehat{g}$, and such that
$f(0)=\widehat{f}(0)$. By Corollary~\ref{invacor}, $\widehat{f}$ is
invariant under the flow of a right invariant Killing field $F$ on
$X$. In particular, the function $\widehat{u}=\langle
\widehat{N},F\rangle $ vanishes identically, where $\widehat{N}$ is
the unit normal vector field to $\widehat{f}$.

Now observe that the left invariant Gauss maps $\cG, \widehat{g}$ of
$f,\widehat{f}$ satisfy $\cG(0)=\widehat{g}(0)$ and $\cG_z (0)=
\widehat{g}_z (0)$, $\cG_{\overline{z}} (0)=
\widehat{g}_{\overline{z}} (0)$. In addition, by Theorem~\ref{rep}
one can see that the first and second fundamental forms of a
conformally immersed $H$-surface in $X$ at any given point are
totally determined by the values of $g, g_z$ and $g_{\overline{z}}$
at that point, where $g$ is the left invariant Gauss map of the
surface. Hence, we see that $f,\widehat{f}$ have at least a second
order contact at $z=0$. Denoting $u=\esiz N,F\esde
\colon\overline{\C}\flecha \R$, we conclude that
$u(0)=\widehat{u}(0)=0$ and $u_z (0)=\widehat{u}_z (0)=0$, but $u$
is not identically zero on $\overline{\C}\equiv S_H$ (since there
are no nowhere vanishing tangent vector fields on a two-dimensional
sphere).

Now observe that since $u$ is a solution of the elliptic PDE $\cL u
=0$, a classical argument by Bers (see for instance Theorem~2.5 in
Cheng~\cite{cheng1}) proves that the condition $u(0)=u_z(0)=0$
implies that the nodal set $u^{-1} (0)$ of $u$ around $0$ consists
of $n\geq 2$ analytic arcs that intersect transversely at $0$. This
is a contradiction with Corollary 3.5 in~\cite{cheng1}, which states
in our situation that if the sphere $S_H$ has index-one, then
$u^{-1}(0)$ is an analytic simple closed curve in $S_H$. This
contradiction completes the proof that the left invariant Gauss map
of $S_H$ is a diffeomorphism.

As we have already observed, Theorem~\ref{rep} implies that the
mapping $\cG$ is never anti-holomorphic. But a simple consequence of
the Hopf index theorem applied to  line  fields on the sphere is
that every  diffeomorphism $F$ of the Riemann sphere to itself has
some point where it is either conformal (when $F$ is orientation
preserving) or anticonformal (when $F$ is orientation reversing).
Hence, the diffeomorphism $\cG$ is orientation preserving, which
completes the proof of Step~1.
\end{proof}

{\bf Step 2:} \emph{Any $H$-sphere in $X$ is a left translation of
the index-one $H$-sphere $S_H$. In particular, $S_H$ is unique up to
ambient isometries in $X$ and thus, item~(2) of the statement of
Theorem~\ref{thm:index1} holds.}

\begin{proof}[Proof of Step~2]
Using the result from Step~1 that the left invariant Gauss map $\cG
$ of $S_H$ is a diffeomorphism, this step can be carried out just as
in Daniel-Mira~\cite{dm2}, and so we will only sketch the idea.
Define $L\colon\overline{\C}\flecha \C$ implicitly in terms of the
diffeomorphism $\cG $ and of the $H$-potential $R$ of $X$ (which
does not vanish, since the sphere $S_H$ exists, see
Fact~\ref{fact3}) as
\[
L(\cG(z))= -\frac{\overline{\cG}_z}{R(\cG(z)) \cG_z }.
\]
Given a conformally immersed $H$-surface
$f\colon\Sigma\looparrowright X$ with left invariant Gauss map $g$,
define the complex quadratic differential
\[
Q_H (dz)^2= \left( L(g) g_z^2  + \frac{1}{R(g)} g_z \overline{g}_z
\right) (dz)^2.
\]
By the left invariant Gauss map equation~(\ref{gauss}) and the fact
that $\cG$ is a diffeomorphism, it can be proved that:
 \begin{enumerate}[(1)]
   \item
If $Q_H$ does not vanish identically for an $H$-surface $f$, then
$|(Q_H)_{\overline{z}}|/ |Q_H|$ is locally bounded on the surface.
Consequently, $Q_H$ has only isolated zeros of negative index (see
e.g., Alencar, do Carmo and Tribuzy~\cite{act1} or Jost~\cite{Jo}).
 \item
 $Q_H=0$ for an $H$-surface $f$ if and only if $f$ is, up to
left translation, an open piece of the index-one sphere $S_H$.
 \end{enumerate}
As the first case above is impossible on a topological sphere by the
Poincaré-Hopf index theorem, we conclude  that any $H$-sphere in $X$
is a left translation of $S_H$.
\end{proof}

\begin{remark}
\label{fixed-point}
 {\em
Let $S_H$ be an index-one $H$-surface immersed in $X$. The
uniqueness conclusion in Step~2 above implies that  $S_H$  is
symmetric in the following sense. Suppose $\phi\colon X \to X$ is a
non-trivial orientation preserving isometry of  $X$ with at least
one fixed point $p$. After conjugating $\phi $ with the left
translation by $p$, we can assume $p=e$. Since $\phi $ is
orientation preserving in $X$ there exists a unitary vector $v\in
T_eX$ such that $d\phi _e(v)=v$. Left translate $S_H$ so that $e\in
S_H$ and the unit normal of $S_H$ at $e$ agrees up to sign with $v$.
Then, both $S_H$ and $\phi(S_H)$ have the same normal at $p$ and
Step~2 implies that $\phi(S_H)=S_H$, hence $\phi$ induces an
orientation preserving isometry $\wh{\phi }=\phi |_{S_H}$ of $S_H$.
In other words, every orientation  preserving element of the
isotropy group at $e$ of the isometry group of $X$ induces an
orientation preserving isometry of some left translation of $S_H$.}

\end{remark}

\vspace{.2cm} {\bf Step 3:} \emph{Any index-one $H$-sphere $S_H$ in
$X$ lies inside a real-analytic family $\{S_{H'} \mid H'\in
(H-\varepsilon , H+\varepsilon )\}$ of index-one spheres in $X$,
where $S_{H'}$ has constant mean curvature  $H'$. }

\begin{proof}[Proof of Step~3] The proof is an adaptation of the proof of Proposition~5.6
in~\cite{dm2}. Recall from (\ref{nul}) that ${\rm Ker} (\cL) = {\rm
Span} \{u_1,u_2,u_3\}$ for any index-one sphere $S_H$ in $X$. In
this setting, a standard application of the implicit function
theorem to the following modified mean curvature functional
$\mathcal{K}$ proves that $S_H$ can be analytically deformed through
constant mean curvature spheres in $\mathcal{K}^{-1} (\{0\})$ by
moving the mean curvature parameter:
\[
\begin{array}{rcl}
\mathcal{K}\colon
C^{2,\a }(S_H)_{\ve }\times \R^3&\longrightarrow & C^{0,\a }(S_H)\\
(\varphi ,a_1,a_2,a_3)& \longmapsto  & H(\Sigma _{\varphi })-\sum
_{i=1}^3a_i\langle N_{\varphi },F_i\rangle ,
\end{array}
\]
where $C^{2,\a }(S_H)_{\ve }$ denotes a sufficiently small
neighborhood of $0\in C^{2,\a }(S_H)$, $\a \in (0,1)$, so that for
every $\varphi \in C^{2,\a }(S_H)_{\ve }$, the normal graph of
$\varphi $ over $S_H$ defines an immersed $C^{2,\a }$ sphere $\Sigma
_{\varphi }$ in $X$, $H(\Sigma _{\varphi })$ is the mean curvature
of $\Sigma _{\varphi }$ (the unit normal vector field $N_{\varphi }$
of $\Sigma _{\varphi }$ is chosen so that it coincides with the
original one on $S_H$ when $\varphi =0$), and $F_1,F_2,F_3$ is the
basis of right invariant vector fields $F_1,F_2,F_3$ whose normal
components are the $u_i$.

All the deformed constant mean curvature spheres $S_{H'}$ in
$\mathcal{K}^{-1}(\{ 0\} )$ have index one since otherwise, an
intermediate value argument would lead to an $H'$-value for which
$S_{H'}$ has nullity at least four, which is impossible
by~(\ref{nul}).
\end{proof}

 {\bf Step 4:} \emph{ The proofs of items (3) and (4)
of Theorem~\ref{thm:index1}.}
\par \vspace{.2cm} {\sc Proof of Step 4.}
It is well-known that for $t>0$ sufficiently small, solutions to the
isoperimetric problem in $X$ for volume $t$  exist and geometrically
are small, almost-round balls with boundary spheres $S(t)$ of
constant mean curvature approximately $\left( \frac{4\pi
}{3t}\right) ^{1/3}$. All such $H$-spheres $S(t)$ have index one,
since the existence of two negative eigenvalues of $-\cL$  would
contradict their weak stability. Hence, there is some
$\widetilde{h}_0(X)>0$ such that for every $H\in
(\widetilde{h}_0(X),\infty)$, there is an index-one $H$-sphere. In
particular, by Step~3, there exists a maximal real-analytic family
$\mathcal{C}$ of index-one spheres in $X$ where the set of values of
the mean curvature of its elements contains the interval
$(\widetilde{h}_0(X), \infty )$. Note by Step~2, up to left
translations, $\mathcal{C}$ contains a unique $H$-sphere for each
image value of the mean curvature function $\cH \colon
\mathcal{C}\to [0,\infty )$. This finishes the proof of item~(3) of
the theorem.

We next prove item~(4). Arguing by contradiction, assume that there
exists $H_0\geq 0$ and a sequence of index-one $H_n$-spheres
$S_{H_n}$ with $H_n\in [0,H_0]$, such that the sequence of second
fundamental forms $\{ \sigma_{S_{H_n}}\} _n$ is not uniformly
bounded. Then, after left translating $S_{H_n}$ by the inverse
element in $X$ of the point $p_n$ of $S_{H_n}$ where the norm of its
second fundamental form is maximal, we may assume that $p_n=e\in
S_{H_n}$ is the point where $|\sigma_{S_{H_n}}|$ has a maximal value
$\l_n$. After passing to a subsequence, we may assume that $\l_n\to
\8$.

Fix  $\ve \in (0,1)$ smaller than the injectivity radius of $X$ and
consider exponential coordinates in the extrinsic ball $B_X(e,\ve )$
centered at $e$ of radius $\ve $, associated to the orthonormal
basis $(E_1)_e,(E_2)_e,(E_3)_e$ of $T_eX$. For $n$ large, rescale
the metric $\langle ,\rangle $ of $X$ to $\l _n^2\langle ,\rangle $,
obtaining a new metric Lie group $X_n=(X,\l _n^2\langle ,\rangle )$.
Observe that $(E_i^n)_e=\l _n^{-1}(E_i)_e$, $i=1,2,3$, defines an
orthonormal left invariant basis for $X_n$, and that the exponential
coordinates in $X_n$ associated to this orthonormal basis are
nothing but rescaling the original coordinates in $X$ by $\l _n$. As
$\l_n \to \infty $, the related left invariant metrics on $X_n$ are
converging uniformly and smoothly to a flat metric in the limit
$\rth$-coordinates. Also, the surface $S_{H_n}$ has constant mean
curvature $\l _n^{-1}H_n$ in $X_n$ and its second fundamental form
is bounded by $1$. In particular, there exists a fixed size
neighborhood of $e=\vec{0}$ at $T_eS_{H_n}$ (i.e., the size is
independent of $n$) such that $S_{H_n}$ can be expressed around
$e\in S_{H_n}$ as the graph of a function defined in this
neighborhood. Elliptic theory then gives that (a subsequence of) the
$S_{H_n}$ converge to a minimal graph $M_{\infty } \subset \R^3$
passing through $\vec{0}$ which has Gaussian curvature $-1$ at this
point, where by \emph{graph} we mean that $M_{\infty}$ is a graph
over its projection to its tangent plane at $\vec{0}$. In
particular, since $M_{\infty }$ is minimal and its Gaussian
curvature at the origin is $-1$, then the (left invariant) Gauss map
of $M_{\infty }$ considered to be a surface in  the metric Lie group
$\rth$ (i.e. the classical Gauss map for oriented surfaces in
$\R^3$) is an {\em orientation reversing} diffeomorphism in a small
neighborhood of the origin.

As $n\to \infty$, the norms in $X_n$ of the Lie brackets of any two
of the elements in the orthonormal basis of left invariant vector
fields corresponding to the vectors $\{
(E_1^n)_e,(E_2^n)_e,(E_3^n)_e\}$ converge to~0, and so the metric
Lie group structures of the $X_n$ converge to the flat left
invariant metric of the abelian Lie group $\R^3$ and $\lim_{n\to
\infty } (E_i^n)_e=\frac{\partial}{\partial  x_i}(\vec{0})$ for
$i=1,2,3$, where $(x_1,x_2,x_3)$ are the limit $\R^3$-coordinates.
Then by construction of the left invariant Gauss map, the
differentials at $e$ of the left invariant Gauss maps of $S_{H_n}$
in $X_n$ converge to the differential of the classical Gauss map of
$M_{\infty }$ at $\vec{0}$. This fact contradicts that, by item~(1)
in the theorem, the Gauss maps of the $S_{H_n}$ are orientation
preserving for all $n$.
\end{proof}

{\bf Step 5:} \emph{ The proof of item~(5) of
Theorem~\ref{thm:index1}.}
\begin{proof}[Proof of Step 5]
Suppose that $\cH \colon \mathcal{C}\to \R$ is not onto. Let
$h_0(X)$ be the infimum of $\cH $ on $\mathcal{C}$. Note that by
Step~3, $h_0(X)$ is not achieved in $\mathcal{C}$. Take any sequence
$\{ S_{H_n}\} _n\subset \mathcal{C}$ with mean curvatures
$H_n\searrow h_0(X)$. If the areas of the spheres $S_{H_n}$ are
uniformly bounded, then because the norms of the second fundamental
forms of this sequence of spheres are uniformly bounded by the
already proven item~(4) of Theorem~\ref{thm:index1}, a standard
compactness argument in elliptic theory implies that there exists an
immersed $h_0(X)$-sphere $S$ in $X$, which is a limit of left
translations of the $S_{H_n}$. By straightforward modification of
the arguments in Step 3, $S$ has index one and belongs to
$\mathcal{C}$, which contradicts the definition of $h_0(X)$.
\end{proof}

{\bf Step 6:} \emph{ The proof of item~(6) of
Theorem~\ref{thm:index1}.}
\begin{proof}[Proof of Step 6]
If $X$ is not isomorphic to $\su$, then $X$ contains no compact
minimal surfaces; this follows from the proof of Fact \ref{fact3}. This completes
the proof of Theorem~\ref{thm:index1}.
\end{proof}

We will prove next the Alexandrov embeddedness of any index-one
$H$-sphere in a three-dimensional metric Lie group $X$ which belongs
to the component $\mathcal{C}$ defined in item~(3) of
Theorem~\ref{thm:index1}. For this, we prove first the following
general result:

\begin{lemma}
\label{alex-lemma}
Let $h_t\colon\S^2 \la X$, $t\in [a,b]$, be a smooth family of
immersions of the sphere $\S^2$ such that \ben[(1)]
\item $h_a$ extends to an isometric
immersion of a compact ball $\wt{h}_a\colon E_a\to X$, where
$\partial E_a= \S^2$.
\item The boundary of $E_a$ has positive constant mean curvature with
respect to the unit normal vector pointing towards $E_a$. In the
sequel, we orient $\esf ^2$ with respect to the inward pointing unit
normal vector to $E_a$.
\item The immersed spheres $h_t(\S^2)$ have positive constant mean curvature
(smoothly varying with $t\in[a,b]$). \een Then there exists a smooth
1-parameter family of isometric immersions $\{ \wt{h}_t\colon E_t\to
X\ | \ t\in [a,b]\} $ of Riemannian balls $E_t$ that are mean convex
and extend the immersions $h_t=\wt{h}_t|_{\partial E_t}$, where
$\S^2$ is the oriented boundary of $E_t$.
\end{lemma}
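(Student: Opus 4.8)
The plan is to prove the lemma by a continuity argument in the parameter $t$. Let $T\subset[a,b]$ be the set of those $s$ for which there exists a smooth family $\{\wt{h}_t\colon E_t\to X\mid t\in[a,s]\}$ of isometric immersions of mean convex Riemannian balls $E_t$ with $\wt{h}_t|_{\partial E_t}=h_t$ and with $\partial E_t=\S^2$ oriented by the inward normal. By hypotheses~(1) and~(2) we have $a\in T$, and $T$ is clearly an interval of the form $[a,s^\ast)$ or $[a,s^\ast]$. I will show that $T$ is both open and closed in $[a,b]$, so that $T=[a,b]$, which is the assertion. The structural observation underlying both steps is that, since $\dim E_t=\dim X=3$, the isometric immersion $\wt{h}_t\colon E_t\to X$ is merely a \emph{local isometry}; in particular it is automatically a local diffeomorphism wherever defined, so no degeneration can arise from a loss of rank, and each $E_t$ inherits the (homogeneous, hence bounded) geometry of $X$.

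For openness, suppose $t_0\in T$ with $t_0<b$ and consider the local isometry $\wt{h}_{t_0}\colon E_{t_0}\to X$. Since $X$ is complete, I can push $\partial E_{t_0}$ a fixed small distance outward along the normal geodesics to extend $\wt{h}_{t_0}$ to a local isometry $\h{h}_{t_0}\colon \h{E}_{t_0}\to X$ of a slightly larger ball $\h{E}_{t_0}\supset E_{t_0}$. For $t$ close to $t_0$ the boundary immersion $h_t$ is $C^\infty$-close to $h_{t_0}=\h{h}_{t_0}|_{\partial E_{t_0}}$, so by the inverse function theorem applied fiberwise along the normal collar, $h_t$ lifts to an embedded sphere $\Sigma_t\subset\h{E}_{t_0}$ depending smoothly on $t$ with $\h{h}_{t_0}|_{\Sigma_t}=h_t$. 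Letting $E_t\subset\h{E}_{t_0}$ be the ball bounded by $\Sigma_t$ and $\wt{h}_t:=\h{h}_{t_0}|_{E_t}$ yields the desired smooth continuation; mean convexity is preserved because the mean curvature of $h_t$ is positive by hypothesis~(3) and the choice of side varies continuously from that at $t_0$. Hence a neighborhood of $t_0$ lies in $T$.

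For closedness, suppose $[a,s_\infty)\subset T$ and let me produce the immersion at $t=s_\infty$. Since the family $\{h_t\}$ is smooth, on the compact interval $[a,s_\infty]$ the boundary spheres have uniformly bounded second fundamental form and their mean curvatures satisfy $H_t\geq H_0$ for some constant $H_0>0$. Each $(E_t,\wt{h}_t^\ast\langle ,\rangle)$ is locally isometric to $X$, hence carries the curvature bounds of the homogeneous space $X$, and its boundary has uniformly controlled geometry. The crucial point is to bound the \emph{size} of $E_t$: since the boundary is mean convex with mean curvature bounded below by $H_0>0$ and the ambient curvature is bounded, the focal distance of $\partial E_t$ along the inward normal is bounded above by a constant $C=C(H_0,X)$; as every interior point is joined to $\partial E_t$ by a minimizing (hence focal-point-free) normal geodesic, the distance from any point of $E_t$ to $\partial E_t$ is at most $C$. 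This gives uniform bounds on the inradius, diameter and volume of $E_t$. By the Cheeger--Gromov compactness theorem for manifolds with boundary of bounded geometry, a subsequence of the $E_t$ converges to a Riemannian ball $E_{s_\infty}$ and the local isometries converge to a local isometry $\wt{h}_{s_\infty}\colon E_{s_\infty}\to X$ with $\wt{h}_{s_\infty}|_{\partial E_{s_\infty}}=h_{s_\infty}$; mean convexity passes to the limit since $H_{s_\infty}>0$. Thus $s_\infty\in T$.

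Combining the two steps gives $T=[a,b]$, and since the continuation in the openness step depends smoothly on $t$ via the implicit function theorem, the resulting family $\{\wt{h}_t\}_{t\in[a,b]}$ is smooth. I expect the main obstacle to be the closedness step, and specifically the a priori size bound on the immersed balls $E_t$: this is precisely where the mean convexity hypothesis is essential, since it is what prevents the balls from growing unboundedly or developing focal degenerations as $t\to s_\infty$, and hence what allows one to pass to the geometric limit.
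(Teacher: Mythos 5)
Your continuity-method skeleton is the same as the paper's, and your openness argument (attaching an outward normal collar to $\wt{h}_{t_0}$ and lifting the nearby boundary immersions through the resulting local isometry) is a correct, and in fact more detailed, version of what the paper merely asserts. The gap is in the closedness step, which is where the entire difficulty of the lemma is concentrated. Your a priori size bound is not justified as stated: it is false that a closed surface with mean curvature $\geq H_0>0$ toward the enclosed side in a space of bounded curvature has focal distance, hence inradius of the enclosed region, bounded by a constant $C(H_0,X)$. The Riccati comparison $m'\geq \tfrac{1}{2}m^2+\mathrm{Ric}(\gamma ')$ only forces blow-up of $m$ in finite time when $H_0$ exceeds a threshold determined by the lower Ricci bound of $X$; left invariant metrics on $\su$ can have a negative Ricci eigenvalue (sufficiently squashed Berger spheres), and in $\H^3$ the balls enclosed by constant mean curvature spheres have inradius tending to infinity as $H\to 1^{+}$. (This particular point is repairable: a uniform bound on $\mathrm{Vol}(E_t)$ follows by integrating the first variation of enclosed volume over the compact parameter interval, since $\{h_t\}$ is smooth.)

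More seriously, even granting diameter and volume bounds, the appeal to ``Cheeger--Gromov compactness for manifolds with boundary of bounded geometry'' presupposes a uniform lower bound on the boundary injectivity radius, i.e.\ on the size $\delta>0$ of an embedded inward collar $\partial E_{t_n}\times[0,\delta]\hookrightarrow E_{t_n}$. This does not follow from bounds on the ambient curvature, the second fundamental form and the diameter: the immersed ball $E_{t_n}$ could fold back so that two sheets of $h_{t_n}(\S^2)$ approach one another with their mean convex sides facing, collapsing the collar, in which case the limit fails to be a manifold with boundary. Ruling this out is precisely the content of the One-Sided Regular Neighborhood Theorem of \cite{mt3}, which is the single nontrivial external input in the paper's proof of this lemma; your proposal buries it inside the unjustified phrase ``bounded geometry.'' Note that once that uniform one-sided collar is available, no size bound on $E_t$ is needed at all: the collar gives a uniform modulus for the openness step, which yields closedness directly, and this is how the paper's argument is structured.
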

\begin{proof}
Consider the set of values
\[
\cT=\{ t_0\in [a,b]\ : \mbox{ the family $\{\wt{h}_t\}_t$ in the
conclusion of the lemma exists for all $t\in [a,t_0]$} \} .
\]
Our goal is to show that $\cT =[a,b]$ which will prove the lemma.
Note that the set $\cT$ is non-empty since $a\in \cT$ by hypothesis,
and that $\cT$ is an open subset of $[a,b]$, since its defining
properties are  open ones for immersions of compact three-manifolds
with boundary and by condition~(3), the oriented extended immersions
have positive mean curvature spheres as their oriented boundaries.
By the connectedness of $[a,b]$, it suffices to prove that the
connected component of $\cT$ containing $a$ is a closed interval.
Arguing by contradiction, suppose that this component has the form
$[a,c)$ for some $c\in (a,b]$.

Consider a sequence $\{ t_n\} _n\subset \cT \cap [a,c)$ with $t_n\to
c$ and related immersions $\wt{h}_{t_n}\colon E_{t_n} \to X$.  Since
$\{ h_t\ | \ t\in [a,b]\} $ is a smooth compact family of
immersions, then the second fundamental forms of the spheres
$h_{t_n}(\S^2)$ are uniformly bounded. As $c\not\in \cT$, then the
surfaces $\partial E_{t_n}$ must fail to have  uniform size 1-sided
regular neighborhoods in their respective balls $E_{t_n}$ as $n\to
\infty$. But this is a contradiction with the One-Sided Regular
Neighborhood Theorem in~\cite{mt3}. This contradiction proves that
$\mathcal{T}=[a,b]$ and finishes the proof of the lemma.
\end{proof}

\begin{corollary}
\label{alex-cor}
Let $X$ be a three-dimensional metric Lie group, and $\mathcal{C}$
be the connected component of the space of oriented, index-one,
constant mean curvature spheres defined in item~(3) of
Theorem~\ref{thm:index1}. Then, any oriented $H$-sphere $S_H\in
\mathcal{C}$ is Alexandrov embedded in the sense of
item~(\ref{alex-embed}) of Theorem~\ref{main}.

\end{corollary}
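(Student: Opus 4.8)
The plan is to combine the deformation result of Lemma~\ref{alex-lemma} with an explicit Alexandrov embedded base case of small, almost-round spheres. Lemma~\ref{alex-lemma} says that Alexandrov embeddedness propagates along any smooth family of immersed spheres of positive constant mean curvature once it is known at one endpoint of the family. Thus it suffices to exhibit a single Alexandrov embedded sphere in $\mathcal{C}$ of large mean curvature and then transport this property along $\mathcal{C}$ down to the given $S_H$, using the real-analytic one-parameter structure of $\mathcal{C}$ provided by item~(3) of Theorem~\ref{thm:index1}.

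For the base case, I would recall from Step~4 of the proof of Theorem~\ref{thm:index1} that for every sufficiently small $t>0$ the solutions of the isoperimetric problem in $X$ for volume $t$ are embedded, almost-round balls $B(t)$ whose boundary spheres $S(t)$ have index one and constant mean curvature close to $(4\pi/3t)^{1/3}$; in particular their mean curvatures are positive and tend to $+\infty$ as $t\to 0^+$. Since each $S(t)$ is embedded and bounds $B(t)$ with $S(t)$ mean convex, the inclusion $B(t)\hookrightarrow X$ is an isometric immersion realizing Alexandrov embeddedness. Because $\mathcal{C}$ is by definition the unique component of index-one spheres whose mean curvatures are unbounded from above, these $S(t)$ lie in $\mathcal{C}$ for $t$ small, furnishing Alexandrov embedded base points of arbitrarily large mean curvature.

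When $X$ is not isomorphic to $\su$, items~(5) and~(6) of Theorem~\ref{thm:index1} give $\cH(\mathcal{C})\subset(0,\infty)$, so every sphere along any path in $\mathcal{C}$ has positive constant mean curvature. Choosing a smooth path $\{h_t\}_{t\in[a,b]}$ in the connected one-manifold $\mathcal{C}$ with $h_a=S(t_0)$ a base sphere of large mean curvature and $h_b=S_H$, Lemma~\ref{alex-lemma} extends the Alexandrov embeddedness from $h_a$ to every $h_t$, and in particular to $S_H$. This settles the corollary in the non-$\su$ case, where the deformation hypotheses are automatic.

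The remaining and most delicate point is the case $X\cong\su$, where $\mathcal{C}$ also contains spheres with $H\le 0$ and Lemma~\ref{alex-lemma} does not apply directly, as it requires strictly positive mean curvature. For $H<0$ I would reverse the orientation of $S_H$, turning it into a sphere of positive constant mean curvature $-H$ to which the previous argument applies; unwinding the orientation reversal then yields the mean-convex bounding ball for the original $S_H$. For the borderline minimal sphere $H=0$ I would instead take a limit of the mean-convex balls $E_{H_n}$ as $H_n\searrow 0$ along $\mathcal{C}$: the uniform curvature estimates of item~(4), together with the absence of area blow-up near $H=0$ in the $\su$ case, give uniformly bounded geometry, so the extensions $\wt{h}_{H_n}$ subconverge to an isometric immersion of a ball extending the minimal sphere. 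I expect this limiting step, and the verification that mean convexity survives as $H\to 0$, to be the main obstacle; the core deformation is handled cleanly by Lemma~\ref{alex-lemma}.
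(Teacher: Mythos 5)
Your treatment of the cases $H>0$ and $H<0$ coincides with the paper's: the small isoperimetric spheres supply embedded, mean-convex base points of large mean curvature in $\mathcal{C}$, Lemma~\ref{alex-lemma} transports Alexandrov embeddedness down the real-analytic family to $S_H$, and the case $H<0$ is reduced to $H>0$ by reversing the orientation. The divergence, and the gap, is in the minimal case $H=0$.

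Your proposed limiting argument for $H=0$ is exactly the step you flag as unresolved, and it is genuinely incomplete: even granting that the spheres $S_{H_n}$ converge smoothly to $S_0$ (which does follow from the analytic family structure once $S_0\in\mathcal{C}$), you must extract a limit of the isometric immersions $\wt{h}_{H_n}\colon E_{H_n}\to X$ of \emph{varying abstract Riemannian three-balls}. Without a uniform one-sided regular neighborhood of $\partial E_{H_n}$ inside $E_{H_n}$ as $H_n\searrow 0$, the balls could degenerate and the limit need not be an isometric immersion of a ball with the required mean-convex boundary; controlling this is precisely the role of the One-Sided Regular Neighborhood Theorem of \cite{mt3} in the proof of Lemma~\ref{alex-lemma}, and whether that control persists as the boundary mean curvature tends to $0$ is exactly what you would have to verify. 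The paper avoids this entirely: if $S_0\in\mathcal{C}$ is minimal, then item~(6) of Theorem~\ref{thm:index1} forces $X$ to be isomorphic to $\su$, the result of \cite{smith1} provides an embedded minimal sphere $\hat{S}$ in $X$, and the uniqueness statement in item~(2) of Theorem~\ref{thm:index1} shows $S_0$ is congruent to $\hat{S}$, hence embedded; an embedded sphere in a manifold diffeomorphic to $\S^3$ bounds a ball on each side, giving Alexandrov embeddedness directly. You should replace your limiting step with this argument (or else supply the missing uniform control on the balls $E_{H_n}$).
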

\begin{proof}
Let $S_H\in \mathcal{C}$ be a sphere of positive constant mean
curvature $H$. By the proof of Theorem~\ref{thm:index1}, there
exists a real analytic one-parameter family $\{S_{H'} : H'\in
[H,\8)\}\subset \mathcal{C}$ of $H'$-spheres which starts at $S_H$
when $H'=H$. As explained in Step 4 of the proof of
Theorem~\ref{thm:index1}, for $H'$ large enough the spheres $S_{H'}$
are embedded, and therefore they  are the oriented boundaries of
mean convex balls in $X$. By Lemma~\ref{alex-lemma} we deduce that
$S_H$ is Alexandrov embedded in the sense of Theorem~\ref{main}.
This proves the corollary for $H> 0$. If $H<0$ for a $H$-sphere
$S_H\in \mathcal{C}$, then
 after a change of orientation we get the same
conclusion. Finally, assume that $S_0\in \mathcal{C}$ is a minimal
sphere. By item~(6) of Theorem \ref{thm:index1}, $X$ is isomorphic to
$\su$. It then follows that $S_0$ is embedded, since by
\cite{smith1} there exists an embedded minimal sphere $\hat{S}$ in
$X$, and by item~(2) of Theorem \ref{thm:index1} $S_0$ and $\hat{S}$
differ by an ambient isometry (note that $S_0$ has index one since
$S_0\in \mathcal{C}$). This finishes the proof of the corollary.
\end{proof}

\begin{remark}
\label{rem44} {\em If there exists a minimal sphere $S_0$ in
$\mathcal{C}$ (which can only happen if $X$ is isomorphic to $\su$
by item~(6) of Theorem~\ref{thm:index1}), then by the above proof,
$S_0$ is the oriented boundary of a mean convex ball in $X$. Hence,
after changing the orientation, $S_0$ is the boundary of a second
such mean convex ball on its other side.}
\end{remark}

\section{Invariant
limit surfaces of index-one spheres.} \label{sec:limits}

\begin{definition}
\label{def6.1} {\rm
 We will denote by  $\chM _X$ the moduli space of all index-one
$H$-spheres in a metric Lie group $X$ with $H\in \R $ varying.
 We say that a complete, non-compact, connected
$H$-surface $f\colon\Sigma\looparrowright X$ is a \emph{limit
surface} of $\chM _X$ with base point $p\in \Sigma $ (also called a
{\em pointed limit immersion} and denoted by $f\colon (\Sigma
,p)\looparrowright (X,e)$) if $e=f(p)$ and there exists a sequence
$\{ F_n\colon S_n\la X\} _n\subset \chM _X$, compact domains
$\Omega_n\subset S_n$ and points $p_n\in \Omega _n$ such that the
following two conditions hold:
 \begin{enumerate}[(1)]
   \item
 $f$ is a limit of the immersions $f_n=l_{F_n(p_n)^{-1}}\circ (F_n|_{\Omega _n})\colon \Omega_n
 \la X$ obtained by left translating $F_n|_{\Omega _n}$ by the inverse of $F_n(p_n)$ in $X$ (hence $f_n(p_n)=e$).
 Here, the convergence is the uniform convergence in the
 $C^k$-topology for every $k\geq 1$, when we view the surfaces as
 local graphs in the normal bundle of the limit immersion.
  \item
 The area of $f_n$ is greater than $n$.
 \end{enumerate}}
\end{definition}

Recall from Section~\ref{sec:index1} that in any metric Lie group
$X$, exactly one of the following two possibilities occurs:
\begin{enumerate}[(a)]
\item There exists a uniform upper bound of the areas of all
   index-one, constant mean curvature spheres
   in $X$. In this case, limit surfaces of $\chM _X$ do not exist since
   property (2) in Definition~\ref{def6.1} cannot occur.

\item There exists $h_0(X)\geq 0$ such that $\chM _X$ contains
   $H$-spheres for all  $H\in (h_0(X),\8)$, and the
   areas of any sequence of $H_n$-spheres $\{ S_n\} _n\subset \chM _X$ with
   $H_n\searrow h_0(X)$ are unbounded. In this case, the mean
   curvature of any limit surface of the sequence
   $\{ S_n\} _n$ in $\chM _X$ is
   $h_0(X)$.
\end{enumerate}

\begin{lemma}
\label{DeltaF} Suppose that $X$ is a metric Lie group for which
case~{\rm (b)} above holds. Then, there exists a pointed limit
immersion $F\colon (\Sigma ,p)\looparrowright (X,e)$ of $\chM_X$
with constant mean curvature $h_0(X)$.
\end{lemma}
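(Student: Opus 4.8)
The plan is to realize $F$ as a subsequential pointed geometric limit of a sequence of index-one spheres furnished by case~(b), using the curvature estimate in item~(4) of Theorem~\ref{thm:index1} to guarantee a smooth limit with bounded geometry, and using the blow-up of areas to guarantee that this limit is complete \emph{and} non-compact. So the argument splits cleanly into a (routine) compactness part and a (more delicate) non-compactness part.

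First I would invoke case~(b) to fix a sequence of index-one $H_n$-spheres $S_n=F_n(\S^2)\subset X$ with $H_n\searrow h_0(X)$ and $\mathrm{Area}(S_n)\to\infty$. Since the $H_n$ are bounded above (say by $H_0=h_0(X)+1$ for $n$ large), item~(4) of Theorem~\ref{thm:index1} gives a uniform bound $|\sigma_{S_n}|\le A(H_0)$ on the norms of their second fundamental forms. Because $X$ is homogeneous it has bounded geometry, so this uniform bound yields a radius $\rho_0>0$, independent of $n$, such that around every point each $S_n$ is a normal graph of uniformly bounded $C^2$-norm over a disk of radius $\rho_0$ in the corresponding tangent plane. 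For each $n$ I then pick a base point $p_n\in S_n$ (specified below) and left translate by $F_n(p_n)^{-1}$, obtaining immersions $f_n$ with $f_n(p_n)=e$.

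The uniform graph representations, together with interior Schauder estimates for the constant mean curvature equation (with the $H_n$ bounded), let me apply Arzel\`a--Ascoli and a diagonal argument to extract a subsequence converging in $C^\infty$ on compact sets, as local graphs, to a complete immersion $f\colon(\Sigma,p)\looparrowright(X,e)$ with $f(p)=e$. Since mean curvature passes to the limit under $C^2$-convergence and $H_n\to h_0(X)$, the limit $f$ has constant mean curvature $h_0(X)$. Taking the compact domains $\Omega_n$ to be intrinsic metric balls $B_{S_n}(p_n,R_n)$ with radii chosen so that $\mathrm{Area}(\Omega_n)>n$, condition~(1) of Definition~\ref{def6.1} holds by construction and condition~(2) holds as well.

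The main obstacle, and the only genuinely new point, is to choose $p_n$ so that $\Sigma$ is non-compact. Here I would exploit the curvature bound twice. By the Gauss equation and the bounded ambient geometry, $|\sigma_{S_n}|\le A(H_0)$ bounds the Gauss curvature of $S_n$ from below, so an area-comparison estimate bounds $\mathrm{Area}\big(B_{S_n}(x,r)\big)$ from above by a function of $r$ alone. Writing $S_n=B_{S_n}(p_n,D_n)$ with $D_n=\mathrm{diam}(S_n)$, the relation $\mathrm{Area}(S_n)\to\infty$ then forces $D_n\to\infty$. Choosing $p_n,q_n\in S_n$ realizing $D_n$, the distance $d_{S_n}(p_n,\cdot)$ attains every value in $[0,D_n]$, so for each fixed $R$ and all large $n$ there is $x_n\in S_n$ with $d_{S_n}(p_n,x_n)=R<D_n$; these points stay in the region of $C^\infty$-convergence and, by convergence of the induced distances, limit to a point of $\Sigma$ at intrinsic distance $R$ from $p$. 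As $R$ is arbitrary, $\mathrm{diam}(\Sigma)=\infty$, hence $\Sigma$ is non-compact, and $f$ is the desired limit surface of $\chM_X$ with mean curvature $h_0(X)$. I expect converting the area blow-up into diameter blow-up, and ensuring the chosen base points produce a genuinely non-compact pointed limit rather than a large but compact one, to be the delicate step; the compactness and regularity in the preceding paragraphs are standard once item~(4) is available.
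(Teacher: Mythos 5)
Your proposal is correct and follows essentially the same route as the paper, which simply cites case~(b), the uniform second fundamental form bound from item~(4) of Theorem~\ref{thm:index1}, and ``a standard compactness argument in elliptic theory.'' The only difference is that you spell out the details the paper leaves implicit --- in particular the conversion of area blow-up into diameter blow-up via the lower Gauss curvature bound and Bishop's comparison, which is exactly the mechanism the paper itself uses later in Section~6 and is the right way to guarantee the limit is non-compact.
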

\begin{proof}
By hypothesis, there exists a sequence $\{ S_n\} _n\in \chM _X$ with
 mean curvatures $H_n\searrow h_0(X)$ and unbounded areas. By item~(4) of
 Theorem~\ref{thm:index1}, the norms of the second fundamental forms of the
immersed $H_n$-spheres $S_n$ form a uniformly bounded sequence. A
standard compactness argument in elliptic theory implies that there
exists a pointed limit immersion $F\colon (\Sigma ,p)\la (X,e)$
which is a limit of left translations of restricted immersions of
compact domains in the $S_n$ into $X$, in the sense of
Definition~\ref{def6.1}.
\end{proof}

\begin{definition}
\label{def6.3} {\rm Suppose $F\colon (\Sigma ,p) \looparrowright
(X,e)$ is a pointed limit $H$-immersion of $\chM _X$ (in particular,
$\Sigma $ is not compact and possibility~(b) above occurs for $X$).
Let $\Delta(F)$ be the set of pointed immersions $F_1\colon (\Sigma
_1,p_1)\looparrowright (X,e)$ where $\Sigma _1$ is a complete,
non-compact connected surface, $p_1\in \Sigma _1$, $F_1(p_1)=e$ and
$F_1$ is obtained as a limit of $F$ under an (intrinsically)
divergent sequence of left translations. In other words, there exist
compact domains $\Omega _n\subset \Sigma $ and points $q_n\in \Omega
_n $ diverging to infinity in $\Sigma $ such that the sequence of
left translated immersions $\{ F(q_n)^{-1}\, F|_{\Omega _n}\} _n$
converges on compact sets of $\Sigma _1$ to $F_1$ as $n\to \infty $.
}
\end{definition}

\begin{corollary} \label{rank:limits}
Let $F\colon (\Sigma',p') \looparrowright (X,e)$ be a pointed limit
immersion of $\chM_X$ of constant mean curvature $H$. Then, the
space $\Delta(F)$ is non-empty and every $[f\colon (\Sigma ,p)
\looparrowright (X,e)]\in \Delta (F)$ is a limit surface of $\chM_X$
which is stable. Furthermore, if the $H$-potential for $X$ never
vanishes and $[f\colon (\Sigma ,p) \looparrowright (X,e)]\in \Delta
(F)$, then:
\begin{enumerate}[(1)]
\item There exists a non-zero
right invariant vector field $K_{\Sigma }$ on $X$ which is
everywhere tangent to $f(\Sigma)$.
\item $f(\Sigma)$ is topologically an immersed plane, annulus or  torus in $X$.
\item  $\Sigma$ is  diffeomorphic to a plane or
an annulus.
\end{enumerate}
\end{corollary}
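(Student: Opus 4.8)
The plan is to establish the two preliminary assertions (non-emptiness of $\Delta(F)$ and stability of its elements) and then the three structural conclusions under the hypothesis that the $H$-potential never vanishes. First I would show $\Delta(F)\neq\0$. Since $F$ is a pointed limit immersion, its domain $\Sigma'$ is complete and non-compact, and by item~(4) of Theorem~\ref{thm:index1} the norm of the second fundamental form of $F$ is uniformly bounded (the bound is inherited from the index-one spheres whose left translates converge to $F$). Choosing any intrinsically divergent sequence $q_n\in\Sigma'$ and applying the standard compactness argument in elliptic theory to the left translations $l_{F(q_n)^{-1}}\circ F$ produces a pointed limit immersion $f\colon(\Sigma,p)\looparrowright(X,e)$, so $\Delta(F)\neq\0$. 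A diagonal argument combining the convergence defining $f$ with the one defining $F$ exhibits $f$ as a limit of left translations of compact pieces of spheres in $\chM_X$; as $\Sigma$ is non-compact the areas of these pieces can be arranged to diverge, so $f$ is a limit surface in the sense of Definition~\ref{def6.1}.

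For stability, I would first observe that $F$ itself has index at most one: if some compact subdomain of $\Sigma'$ had index at least two, then by smooth convergence the corresponding subdomain of an index-one sphere $S_n$ would have index at least two for large $n$, a contradiction. Now suppose some $f\in\Delta(F)$ were unstable, witnessed by a compactly supported function making the stability quadratic form negative. By the convergence $l_{F(q_n)^{-1}}\circ F\to f$ this function transplants, for every large $n$, to a destabilizing function supported near $q_n$ on $F$. Since $q_n\to\infty$ intrinsically, two indices can be chosen with disjoint supports, producing two $L^2$-orthogonal destabilizing functions on $F$ and hence index at least two, contradicting the previous paragraph. Therefore every $f\in\Delta(F)$ is stable.

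The heart of the proof is item~(1), and the key idea is a \emph{total Gauss-map-area} estimate. By item~(1) of Theorem~\ref{thm:index1}, the left invariant Gauss map of each index-one sphere in $\chM_X$ is an orientation-preserving diffeomorphism onto $\S^2\equiv\cb$, so the pullback under the Gauss map of the round spherical area form is a positive measure of total mass $4\pi$; in a conformal coordinate $z$ its density against $\tfrac{i}{2}\,dz\wedge d\overline{z}$ is $\tfrac{4}{(1+|g|^2)^2}(|g_z|^2-|g_{\overline{z}}|^2)$. Because both the left invariant Gauss map and the induced metric are invariant under left translations, this Gauss pullback measure passes to limits: for $F$ its density is a limit of positive Jacobians, hence non-negative, and integrating over a compact exhaustion of $\Sigma'$ bounds its total mass by $4\pi$. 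Now for $f\in\Delta(F)$, arising from translations at diverging points $q_n\to\infty$, the Gauss pullback measure of $f$ over any fixed compact set is the limit of the Gauss pullback measure of $F$ over receding neighborhoods of $q_n$; since the total mass on $\Sigma'$ is finite, these tail integrals tend to $0$. Thus the Gauss pullback measure of $f$ vanishes identically, and as its density is non-negative we get $|g_z|^2=|g_{\overline{z}}|^2$ everywhere, i.e. the differential of the Gauss map of $f$ has rank at most one. Since the $H$-potential does not vanish on $g(\Sigma)$, Corollary~\ref{invacor} then furnishes a non-zero right invariant vector field $K_\Sigma$ tangent to $f(\Sigma)$, with Gauss-map rank exactly one and $g(\Sigma)$ a regular curve in $\cb$. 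I expect this escape-of-Gauss-area-to-infinity step to be the main obstacle, since it requires carefully tracking that the positive Gauss-area of the spheres survives in the limit as a finite non-negative measure and genuinely concentrates away from the translating base points.

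Finally, items~(2) and~(3) are topological consequences of the invariance established in~(1). The image $f(\Sigma)$ is a union of orbits of the flow of $K_\Sigma$; each orbit is a left coset of the one-parameter subgroup generated by $K_\Sigma$, hence diffeomorphic to $\R$ or $\S^1$, while the transversal is the regular Gauss-image curve $g(\Sigma)$, itself diffeomorphic to $\R$ or $\S^1$. Realizing $f(\Sigma)$ as swept out by the flow acting on this transversal curve yields exactly the three topological types plane $(\R\times\R)$, annulus $(\R\times\S^1)$, and torus $(\S^1\times\S^1)$, proving item~(2). For item~(3), the pulled-back field is nowhere zero on $\Sigma$ and foliates it by flow lines whose leaf space is the transversal one-manifold, exhibiting $\Sigma$ up to diffeomorphism as a product of two one-manifolds; since $\Sigma$ is orientable and, being a limit surface, non-compact, the compact torus case is excluded, leaving precisely the plane and the annulus. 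This completes the plan.
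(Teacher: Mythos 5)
Your proposal is correct, and for the preliminary claims and for the passage from item (1) to items (2)--(3) it follows essentially the paper's path: bounded second fundamental form plus non-compactness of $\Sigma'$ gives $\Delta(F)\neq\0$; the index-at-most-one property of $F$ together with transplanting a destabilizing domain to disjoint neighborhoods of the divergent points $q_n$ gives stability; and the foliation of $f(\Sigma)$ by integral curves of $K_\Sigma$ fibers it over a one-manifold (one small slip: these orbits are \emph{right} cosets $\G x$ of the 1-parameter subgroup, since the flow of a right invariant field is by left translations, not left cosets). Where you genuinely diverge is item (1). The paper argues qualitatively: if $dG$ had rank two somewhere on $\Sigma$, a compact disk $D\subset\Sigma$ would have Gauss image containing an open set; $D$ would be a limit of pairwise disjoint disks in $\Sigma'$, each in turn a limit of disks on the spheres, and two disjoint disks on one sphere with overlapping open Gauss images contradict the injectivity of the spheres' Gauss maps. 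You make this quantitative: the pullback of the spherical area form has total mass $4\pi$ on each sphere, hence (using that the left invariant Gauss map is unchanged by left translation and that positivity of the Jacobian survives $C^k$ limits) a non-negative density of total mass at most $4\pi$ on $F$; since the pieces of $\Sigma'$ carrying the limit $f$ recede to infinity and may be taken pairwise disjoint, the Gauss-area they carry tends to zero, forcing $|g_z|^2-|g_{\overline{z}}|^2\equiv 0$ for $f$, i.e.\ rank at most one, after which Corollary~\ref{invacor} upgrades this to rank exactly one and produces $K_\Sigma$ exactly as in the paper. Both arguments rest on the same input (item (1) of Theorem~\ref{thm:index1}); yours trades the paper's covering/injectivity contradiction for a mass-escape argument, which is arguably cleaner but requires the bookkeeping of the Gauss-area measure under limits that you correctly supply. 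The only place you are looser than the paper is in (2)--(3): the paper distinguishes the case $X$ isomorphic to $\su$ (where $\Pi\colon X\to\G\setminus X$ is an $\S^1$-bundle over $\S^2$, giving annulus or torus) from the remaining groups (where $\Pi$ is a trivial $\R$-bundle over a plane, giving plane or annulus), whereas identifying the transversal with the Gauss image curve $g(\Sigma)$ is only a local statement; still, the global product structure you need does hold and the conclusions are unaffected.
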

\begin{proof}
The property that $\Delta(F)$ is non-empty follows from the facts
that $\Sigma '$ is non-compact and $F\colon \Sigma '\la X$ has
bounded second fundamental form. Moreover, a standard diagonal
argument shows that every element in $\Delta (F)$ also is a limit
surface of $\chM_X$.

We next check that if $[f\colon (\Sigma ,p) \looparrowright
(X,e)]\in \Delta (F)$, then $f$ is stable. Note that since the
spheres in $\chM_X$ have index one, then the index of $F$ is at most
one (otherwise $\Sigma '$ contains at least two compact disjoint
strictly unstable subdomains, which implies that the $H_k$-spheres
$S_{H_k}\in \chM_X$ that limit to the immersion $F$ also have two
compact, disjoint, smooth, strictly unstable subdomains, which
contradicts the index-one property of $S_{H_k}$). Arguing by
contradiction, if $f$ is not stable, then there exists a smooth
compact domain $D$ on $\Sigma$ that is strictly unstable. Since $f$
is a limit of the left translated immersions $F(q_n)^{-1}\,
F|_{\Omega _n}$ where the $\Omega _n\subset \Sigma '$ are compact
domains and $q_n\in \Omega _n$ is a divergent sequence in $\Sigma
'$, then $D$ must be the limit under left translation by
$F(q_n)^{-1}$ of a sequence of compact domains $D_n\subset \Omega
_n$. Since the points $q_n$ diverge in $\Sigma '$, then after
extracting a subsequence, we can assume that the $D_n$ are pairwise
disjoint. As $D$ is strictly unstable for $f$, then for $n$ large
$D_n$ is also strictly unstable for $F$. This implies that $\Sigma'$
contains two smooth, closed, disjoint strictly unstable domains, a
property that we have seen is impossible. Hence, the immersion $f$
is stable.

In the sequel, we will assume that the $H$-potential for $X$ never
vanishes. Next we show that $f(\Sigma )$ is everywhere tangent to a
right invariant vector field. By Theorem~\ref{rep}, the left
invariant Gauss map $G\colon \Sigma \to \esf^2\subset T_eX$ of $f$
is never of rank zero. Since $f$ is a limit surface of $\chM_X$ and
of $F$, then the differential of $G$ is never of rank two as we
explain next: otherwise, there exists a compact disk $D\subset
\Sigma $ whose image by $G$ contains a non-empty open disk in
$\esf^2$; since $D$ is compact, then $D$ is a smooth limit of
related pairwise disjoint disks $D_n$ in the domain $\Sigma '$ of
$F$. Each such $D_n$ is in turn a smooth limit of disks on spheres
in $\chM _X$, which easily contradicts the injectivity of the left
invariant Gauss maps of the spheres in $\chM_X$  that limit to $F$.
Hence, the rank of $G$ is 1 everywhere on $\Sigma $, and by
Corollary~\ref{invacor}, there exists a right invariant vector field
$K_{\Sigma}$ on $X$ whose restriction to $f(\Sigma)$ is tangent to
$f(\Sigma )$. Now item~(1) of the corollary is proved.

To prove item~(2), first suppose that $X$ is not isomorphic to $\su
$. Consider a non-zero right invariant vector field $K_{\Sigma }$ on
$X$ which is everywhere tangent to $f(\Sigma)$. Let $\G \subset X$
be the 1-parameter subgroup obtained under exponentiation of the
tangent vector $K_{\Sigma }(e)\in T_eX$. Since $\G $ is the integral
curve of $K_{\Sigma }$
 passing through $e$ and $K_{\Sigma }$ is right invariant, then the integral curve
of $K_{\Sigma }$ passing through any point $x\in X$ is the right
translation $\G x$ of $\G $ by $x$. Consider the quotient surface
$\G \setminus X =\{ \G x\ | \ x\in X\} $ whose points are the right
cosets of $\G $ and let $\Pi \colon X\to \G \setminus X$ be the
corresponding submersion. From the classification of the possible
Lie group structures on $X$, a simple case by case study shows that
there exists a properly embedded plane in $X$ that intersects each
right coset of $\G$ transversely in a single point. It follows that
$\Pi $ is a trivial $\R$-bundle over the topological plane $\G
\setminus X$. Thus, $f$ is a trivial $\R $-bundle over an immersed
curve $\a $ in $\G \setminus X$. This implies that $f(\Sigma) $ is a
complete immersed plane or annulus with a natural product structure
of $\R $ (the parameter of $\R $ identifies with the parameter along
the fibers of $\Pi $ contained in $f(\Sigma )$) with either $\R $
(if $\a $ is an immersed open arc) or possibly over $\esf^1$ (if $\a
$ is an immersed closed curve), which proves item~(2) when $X$ is
not isomorphic to $\su$. In this case, it follows that $\Sigma $ is
topologically a plane or an annulus; note that $f$ might factor
through a quotient immersion of an annulus and so $\Sigma $ might be
topologically a plane even if $\a$ is an immersed closed curve.

Finally, assume $X$ is isomorphic to $\su $. In this case,
$\Pi\colon X \to \G \setminus X$ has the structure of a smooth
oriented $\esf^1$-bundle of $X$ over $\S^2$ (this fiber
$\esf^1$-bundle can be identified, up to ambient orientation, with
the usual oriented Hopf fibration of $\S^3$ over $\S^2$, a fact that
we will not use here). A similar argument as the one given above
proves that $f(\Sigma) $  is an immersed annulus or torus. Since
$\Sigma $ is non-compact, then $\Sigma$ is diffeomorphic to a plane
or an annulus in this case. This completes the proofs of items (2)
and (3).
\end{proof}

\section{The proof of Theorem~\ref{main}.}
\label{sec:SU(2)}

Let $X$ be $\su$ with a left invariant metric. We start by proving
the surjectivity of the map $\cH\colon\mathcal{C}\to \R$ described
in Theorem~\ref{thm:index1}, or equivalently, that possibility (a)
at the beginning of Section~\ref{sec:limits} holds for $X$. To prove
this we argue by contradiction, so assume that possibility (b)
defined at the beginning of Section~\ref{sec:limits} occurs. By
Lemma~\ref{DeltaF} and Corollary~\ref{rank:limits}, there exist a
stable pointed limit $h_0(X)$-immersion $f\colon (\Sigma ,p)\la
(X,e)$ of $\chM_X$ and a right invariant vector field $K_{\Sigma }$
on $X$ which is everywhere tangent to $f(\Sigma )$, where $h_0(X)\in
[0,\infty)$ is defined in the statement of item~(5) of
Theorem~\ref{thm:index1}. If we consider $\su$ with its metric of
constant sectional curvature 1, then the integral curves of any
right invariant vector field are closed geodesics of length  $2\pi
$. Since the left invariant metric on $X $ is quasi-isometric to the
constant curvature 1 metric on $\su $ (via the identity map), then
the integral curves of $K_{\Sigma }$ are embedded closed curves of
uniformly bounded length in $X$. Since $f(\Sigma )$ is foliated by
integral curves of $K_{\Sigma }$, then $f(\Sigma )$ is an immersed
annulus or torus by Corollary~\ref{rank:limits}.  As $f$ is stable
and $f\colon \Sigma \to f(\Sigma )$ can be viewed as a trivial, a
$\Z$ or a $(\Z \times \Z )$-regular cover of the image annulus or
torus, then we conclude by Proposition~2.5 in~\cite{mpr19} that
$f(\Sigma )$ is a stable $h_0(X)$-surface.

Suppose for the moment that $f(\Sigma )$ is an immersed torus.
Consider a right invariant vector field $V$ on $\su$ such that
$V(e)$ is tangent to $f$ at $e$ and linearly independent with
$K_{\Sigma }(e)$. Since $\su$ has no two-dimensional subgroups and
$K_{\Sigma }$ is everywhere tangent to $f(\Sigma )$, then $V$ is not
everywhere tangent to $f(\Sigma )$. Let $N$ be a unit normal vector
field along $f$.
 Hence, $J=\langle V,N \rangle$ is a bounded
Jacobi function on the torus $f(\Sigma) $ that changes sign, which
contradicts stability.

Next assume that $\hat{\Sigma}:=f(\Sigma )$ is an immersed annulus.
We claim that the area growth of $\hat{\Sigma}$ is at most linear.
Let $\alfa:\R\flecha \hat{\Sigma}$ be a unit speed length minimizing
geodesic joining the two ends of $\hat{\Sigma}$. Recall that
$\hat{\Sigma}$ is foliated by integral curves of $K_{\Sigma}$ of
uniformly bounded length. Let $\mathcal{B}$ be the family of such
integral curves in $\hat{\Sigma}$ and let $R_1= {\rm sup}_{\beta\in
\mathcal{B}} \{ {\rm length} (\beta)\}<\infty $.

For each $R>R_1$ and $t\in\R$ we define

\begin{equation} \label{eqanuno}
C(t,R)= \bigcup \{ \beta\in \mathcal{B} \mid \beta \cap \alfa
([t-R,t+R]) \neq \mbox{\O}\}.
\end{equation}
The triangle inequality implies that for $R>R_1$, we have
\begin{equation} \label{equandos}
B_{\hat{\Sigma}}(\alfa(t),R)\subset C(t,2R)\subset
B_{\hat{\Sigma}}(\alfa(t),4R),
\end{equation}
where $B_{\hat{\Sigma }}(q,r)$ denotes the intrinsic ball in
$\hat{\Sigma }$ of radius $r>0$ centered at a point $q\in
\hat{\Sigma }$. Since  $f$ is a limit of immersions with uniformly
bounded second fundamental form, then $\hat{\Sigma}=f(\Sigma )$ has
bounded second fundamental form, and so we deduce by the Gauss
equation for $f$ that the Gaussian curvature of $\hat{\Sigma}$ is
bounded from below. Under these hypotheses, Bishop's second theorem
(see e.g., Theorem III.4.4 in Chavel~\cite{ch2}) gives that for
every $R_0>0$, there exists a constant $C=C(R_0)>0$ such that every
intrinsic ball in $\hat{\Sigma}$ of radius $R_0$ has area less than
$C$. So, by the second inclusion in equation \eqref{equandos} we see
that for every $R>R_1$ there exists a constant $K(R)$ so that

\begin{equation}\label{equantres}
{\rm Area} (C(t,2R)) \leq K(R)\hspace{1cm} \text{ for every $t\in
\R$}.
\end{equation}
Now observe that as $\a $ is parameterized by arc length, the
definition of $C(t,R)$ in \eqref{eqanuno} implies that
\begin{equation}
\label{equan4} C(0,R) \subset \bigcup_{j=-(n+1)}^{n+1}
C(j,2(R_1+1)),
\end{equation}
where $n$ is the integer part of $R$, $n=[R]$. Now,
\[
 \mbox{Area}(C(0,R))\stackrel{(\ref{equan4})}{\leq }\sum _{j=-(n+1)}^{n+1}\mbox{Area}(C(j,2(R_1+1)))
\]
\[\stackrel{(\ref{equantres})}{\leq }
 \sum _{j=-(n+1)}^{n+1}K(R_1+1)=(2n+3)K(R_1+1)<(2R+5)K(R_1+1)
\]
Since $B_{\hat{\Sigma}}(\alfa(0),R)\subset C(0,2R)$ by
(\ref{equandos}), it follows that $\hat{\Sigma}$ has at most linear
area growth, as claimed.

Since $\hat{\Sigma}$ has at most linear area growth, the conformal
structure of $\hat{\Sigma} $ is parabolic (see e.g.,
Grigor{'}yan~\cite{gri1}). From this point we can finish with an
argument similar to the one when $f(\Sigma)$ was a torus. Choose a
right invariant vector field $V$ of $X$ that is tangent to
$f(\Sigma)$ at $e$ but is not everywhere tangent to $f(\Sigma)$. If
$N$ stands for a unit normal vector field along $f$, then $J=\langle
V,N \rangle$ is a bounded Jacobi function on $\hat{\Sigma }$ that
changes sign, which contradicts the main theorem in Manzano, P\'erez
and Rodr\'\i guez~\cite{mper1}.

The above two contradictions for the possible topologies of
$\wh{\Sigma }$ show that the map $\cH\colon\mathcal{C}\to \R$
described in item~(5) of Theorem~\ref{thm:index1} is surjective.
Then, Theorem~\ref{thm:index1} implies that for all $H\in \R$ there
exists a sphere of constant mean curvature $H$ in $X$, unique up to
left translations in $X$. That is, items~(\ref{exist}) and
(\ref{uniq}) of Theorem~\ref{main} hold. Moreover, also by
Theorem~\ref{thm:index1} we see that all these constant mean
curvature spheres have index one, nullity three, and satisfy that
their Gauss maps are diffeomorphisms. Therefore, item~(\ref{index1})
of Theorem~\ref{main} holds. Note that item~(\ref{alex-embed})
follows from Corollary~\ref{alex-cor}.

We next explain how to define the center of symmetry of an immersed
$H$-sphere $f\colon S_H\la X$. Assume for the moment that $H\geq 0$.
By the already proven item~(\ref{alex-embed}) of Theorem~\ref{main},
the immersion $f\colon S_H\looparrowright X$ can be extended to an
isometric immersion $F\colon B\to X$ of a Riemannian three-ball such
that $\partial B=S_H$ is mean convex and $F|_{\partial B}=f$. The
choice of this isometric immersion $F$ is clear if the mean
curvature of $f$ is not zero. If $f$ is minimal, then by
Remark~\ref{rem44} we can choose $F\colon B\to X$ so that the
orientation of $S_H$ is the orientation induced as the boundary of
$B$.

After fixing the orthonormal basis $\{ (E_1)_e,(E_2)_e,(E_3)_e\}$ of
$T_eX$, we can consider the isotropy group $I_e(X)$ of the identity
element of $X$ in the isometry group of $X$ to be a subgroup of
$O(3)$, and the corresponding subgroup $I_e^+(X)$ of orientation
preserving isometries to be a subgroup of $SO(3)$; this
identification of $I_e(X),I_e^+(X)$ with subgroups of $O(3)$ is just
by the related linear action of the differentials of their elements
on $T_eX=\rth$. Moreover, every $\phi \in I_e^+(X)$ is an inner
automorphism of $X$,
\[
\phi (x)=a^{-1}xa,\quad x\in X,
\]
where $a$ is some element in the 1-parameter subgroup $\exp
(\mbox{Span}\{ v\} )$ of $X$, and  where $v$ is a unitary vector in
$T_eX$ such that $d\phi _e(v)=v$. In particular, each $\phi \in
I_e^+(X)$ is a group isomorphism of $\su$. Furthermore, $I_e^+(X)$
contains a $\Z_2\times \Z_2$ subgroup corresponding to the $\pi
$-rotations that appear in Proposition~\ref{propos3.2}; more
specifically (see Proposition~2.21 of~\cite{mpe11} for details).
\begin{ddescription} \label{des} The isotropy subgroup  $I_e(X)$
of  $e\in X$ in the isometry group of $X$ has the following
description: \ben
  \item $I_e^+(X)=I_e(X)=\Z_2\times \Z_2$ when the isometry group
of $X$ is three-dimensional.
\item If {\rm Iso}$(X)$ has dimension four, then $I_e^+(X)=I_e(X)$
is an $\esf^1$-extension of $\Z_2$ corresponding to the rotations
about the 1-parameter subgroup of $X$ generated by the Ricci
eigendirection with multiplicity one; for concreteness in this case
we may assume that this Ricci eigendirection with multiplicity one
is $E_3$.
\item When {\rm Iso}$(X)$ has dimension six, then $X$ has positive constant sectional
curvature and $I_e^+(X)=SO(3)$, $I_e(X)=O(3)$. \een
\end{ddescription}

We next define injective group homomorphisms from $I_e^+(X)$ into
the groups $\mbox{Iso}^+(S_H)$, $\mbox{Iso}^+(B)$ of orientation
preserving isometries of $S_H$ and $B$, respectively. Given $\phi
\in I_e^+(X)$ different from the identity element, let $v\in T_eX$
be a unitary vector such that $d\phi _e(v)=v$ ($v$ is unique up to
sign). As the left invariant Gauss map $G$ of $S_H$ is a
diffeomorphism, there is a unique $p_v\in S_H$ such that $G(p_v)=v$.
Consider the left translated sphere $(l_{f(p_v)^{-1}}\circ f)(S_H)$.
By Remark~\ref{fixed-point}, as $\phi $ is an isometry of $X$ that
leaves $e$ invariant and $d\phi _{e}(v)=v$, then $\phi $ leaves
$(l_{f(p_v)^{-1}}\circ f)(S_H)$ invariant. In particular, $\phi $
induces an orientation preserving isometry $\wh{\phi}\colon S_H\to
S_H$ such that
\[
l_{f(p_v)}\circ \phi \circ l_{f(p_v)^{-1}}\circ f=f\circ \wh{\phi }.
\]
Observe that $d\phi _e\circ G=G\circ \wh{\phi }$. This implies that
we have an injective group homomorphism
\[
 \aleph \colon I_e^+(X) (\subset SO(3))\to \mbox{Iso}^+(S_H), \quad \aleph (\phi )=\wh{\phi }.
\]
Note that $\wh{\phi }$ can be extended uniquely to a small
neighborhood of every point of $S_H$ in $B$ (because the submersion
$F\colon B\to X$ is a local isometry and $\phi \colon X\to X$ is an
isometry), and since $S_H$ is compact, then $\wh{\phi }$ can be
extended to an isometry of a fixed size neighborhood $S_H(\ve )$ of
$S_H$ in $B$, an extension that we will denote by $\wt{\phi }\colon
S_H(\ve )\to S_H(\ve )$. As $B$ is simply connected, then a standard
monodromy argument ensures that we can extend $\wt{\phi }$ to an
isometry $\wt{\phi }\colon B\to B$. In particular,
\begin{equation}
  \label{eq:cs1}
  l_{f(p_v)}\circ \phi \circ l_{f(p_v)^{-1}}\circ F=F\circ \wt{\phi }\quad \mbox{ and }
\quad  \wt{\phi }|_{S_H}=\wh{\phi }.
\end{equation}
This construction defines an injective group homomorphism
\[
 \Psi \colon I_e^+(X) (\subset SO(3))\to \mbox{Iso}^+(B), \quad \Psi (\phi )=\wt{\phi }.
\]

By Theorem~3 in~\cite{my1},  every compact Lie subgroup $T$ of the
group of orientation preserving diffeomorphisms Diff$^+(\B)$ of the
closed unit ball $\B\subset \rth$ centered at the origin  is
conjugate to the action of a compact Lie subgroup $\wh{T}$ of
$SO(3)\subset\mbox{\rm Diff}^+(\B)$, as long as $T$ is not
isomorphic to the alternating group $A_5$ (this is the group of
orientation preserving isometries of a regular icosahedron). In
particular, since by Description~\ref{des},
 $\Psi ( I_e^+(X))\subset\mbox{\rm Diff}^+(B)$ is not isomorphic to $A_5$,
then $B$  has a point $p_0$ that is fixed by all of the elements of
$\Psi ( I_e^+(X))$. Since $\Psi ( I_e^+(X))$ is not isomorphic to
$\S^1$ or to a finite cyclic group, $p_0$ is the unique such fixed
point.

We now define the center of symmetry of the immersed $H$-sphere
$S_H$ with $H\geq 0$ to be $F(p_0)$. Next we prove that every
isometry $\psi \colon X\to X$ that fixes $F(p_0)$ leaves $f(S_H)$
invariant. To do this, we will first define an isomorphism from
Iso$^+(B)$ to $I^+_{F(p_0)}(X)$, where $I^+_{F(p_0)}(X)$ denotes the
set of orientation preserving isometries of $X$ that fix $F(p_0)$.
Given $\wt{\phi }\in \mbox{Iso}^+(B)$, consider a geodesic ball
$B(p_0,\de )\subset B$ of radius $\de >0$ small enough so that
$F|_{B(p_0,\de )}\colon B(p_0,\de )\to F(B(p_0,\de ))$ is an
isometry. Then, $(F|_{B(p_0,\de )})\circ \wt{\phi }\circ
(F|_{B(p_0,\de )})^{-1}$ extends uniquely to an isometry $\Lambda
(\wt{\phi })\in I^+_{F(p_0)}(X)$, and the map $\Lambda \colon
\mbox{Iso}^+(B)\to I^+_{F(p_0)}(X)$ is clearly a continuous
injective homomorphism. Also, $\Lambda (\wt{\phi })$ leaves $f(S_H)$
invariant for every $\wt{\phi }\in \mbox{Iso}^+(B )$. We next check
that $\Lambda $ is onto (and hence, it is an isomorphism). In case
(1) of Description~\ref{des}, $I_{F(p_0)}^+(X)$ is finite so
$\Lambda $ is an isomorphism. In case~(2), we have a composition of
injective Lie group homomorphisms
\[
I_e^+(X)\stackrel{\Psi }{\to }\mbox{Iso}^+(B)\stackrel{\Lambda }{\to
}I^+_{F(p_0)}(X),
\]
and the groups at the extrema of the last sequence are isomorphic;
in fact, each of $I_e^+(X)$, $I^+_{F(p_0)}(X)$ has two components
(both homeomorphic to $\esf^1$), from where we deduce that both
$\Psi $ and $\Lambda $ are homomeorphisms. Finally, in case (3) of
Description~\ref{des} we have $I_e^+(X)=SO(3)$, $X$ has constant
sectional curvature and $f(S_H)$ is the orbit of any of its points
by the group of orientation preserving isometries of $X$ that fix
$F(p_0)$; in particular, $f(S_H)$ is in this case a constant
distance sphere from $F(p_0)$. Therefore, in any case every isometry
$\psi \in I^+_{F(p_0)}(X)$ leaves $f(S_H)$ invariant. Observe that
if $X$ contains an orientation reversing isometry, then $X$ has
constant sectional curvature and thus every isometry of $X$ fixing
$F(p_0)$ leaves $f(S_H)$ invariant.   This completes the proof of
item~(\ref{familySt}) of Theorem~\ref{main} in the case that $S_H$
has non-negative mean curvature.

If $S_H$ is an immersed $H$-sphere with $H<0$, then we could change
the orientation on $S_H$ and then apply the above procedure to
define its center of symmetry. This approach is not satisfactory,
since it produces a jump discontinuity on the center of symmetry
when passing from the case $H\geq 0$ to the case $H<0$. To avoid
this discontinuity we argue as follows. First note that the three
geodesics $\G _1,\G_2,\G_3$ that appear in
Proposition~\ref{propos3.2} intersect at $e=I$ and $-I$ (here we
have identified $X$ with the set of unit length quaternions), and
that the isotropy groups of $I,-I$ in the isometry group of $X$
coincide. Also observe that given $x\in X$, the isotropy group
$I_x(X)$ is conjugate to $I_e(X)$, namely $I_x(X)=\{ l_x\circ \phi
\circ l_{x^{-1}}\ | \ \phi \in I_e(X)\} $. Therefore,
$I_x(X)=I_{A(x)}(X)$ where $A\colon X\to X$ is the antipodal map,
and if $I_x(X)=I_y(X)$ for $x,y\in X$, $x\neq y$, then $y=A(x)$.
With this in mind, we define the center of symmetry of an immersed
sphere $f\colon S_H\la X$ with $H<0$ as $A(F(x_0))\in X$, where
$F\colon B\to X$ is the isometric immersion of a three-ball that
extends $f$ endowed with the opposite orientation on $S_H$ (that is,
$f\colon S_H\la X$ is considered to be a $(-H)$-sphere). With this
definition, item~(\ref{familySt}) of Theorem~\ref{main} holds
trivially for every $H$-sphere in $X$, $H\in \R $.

In order to prove the final statement of Theorem~\ref{main}, we will
define a natural normalization for the $H$-spheres in $X$. Let $\{
f_H\colon S_H\la X\ | \ H\in \R\} $
 be the family of immersed spheres of constant mean curvature $H$ in $X$ with $H\in \R$ varying,
 normalized so that for
each element $f_H\colon S_H\la X$ of this family,
 the point $q_H\in S_H$  where the left invariant Gauss map of $f_H$
 takes the value $(E_3)_e\in \esf^2\subset T_eX$
satisfies $f_H(q_H)=e$.
By Step 3 in the proof of Theorem~\ref{thm:index1}, this is an
analytic family with respect to the mean curvature parameter $H$.
Let $q_H^*\in S_H $ be the unique point where the left invariant
Gauss map of $S_H$ takes the value $-(E_3)_e$, and observe that
since $f_H(S_H)$ is invariant under the $\pi $-rotation about
$\G_3$, then $f_H(q_H^*)\in \G_3$. We parameterize the subgroup
$\G_3$ by arc length by the Lie group exponential
\[
\pi(t)=\exp(t (E_3)_e)\colon \R\flecha \G_3.
\]
Then, the map $H\in \R\mapsto  f_H(q_H^*)\in \G_3$ defines an
analytic function $t\colon  \R\to \R$ by $\pi(t(H))=f_H(q_H^*)$. Now
consider the point $x_H=\pi(t(H)/2)$. For $H$ large and positive,
the sphere $S_H$ is embedded and its center of symmetry is the point
$x_H$, since $x_H$ is the midpoint of the short arc of $\G_3$ with
endpoints $e$ and $f_H(q_H^*)$. Now define $\hat{f}_H =l_{x_H^{-1}}
\circ f_H\colon S_H\la X $. Then, $\{ \hat{f}_H\colon S_H\la X\ | \
H\in \R\} $ is an analytic family with respect to the parameter $H$.
If $H$ is large and positive, then by definition, we see that the
center of symmetry of $\hat{f}_H$ is $e$. Therefore,  we conclude by
analyticity that all the oriented spheres $\hat{f}_H\colon S_H\la X$
have $e$ as their center of symmetry. This concludes the proof of
Theorem~\ref{main}.

\section{The geometry of minimal spheres.}
\label{sec:SU(2)}

In this section we apply Theorems~\ref{main} and~\ref{thm:index1} to
describe in detail the geometry of the unique minimal sphere in an
arbitrary compact, simply connected homogeneous three-manifold $X$,
where we identify $X$ with $\su$ endowed with a left invariant
metric. It follows immediately from the proof of the next theorem
that when the isometry group of such an $X$ has dimension greater
than three, then the minimal sphere $S(0)$ described in the last
statement of Theorem~\ref{main} is the set of elements in $\su$ of
order four, where we choose the base point $e$ of $X$ to be the
identity matrix in $\su$; this last property also follows from
Remark 5 in Torralbo~\cite{tor2}.

\begin{theorem}
\label{embed:su2} Suppose $X$ is $\su$ endowed with a left invariant
metric. Let $e=I$ be the base point of $X$, where $I$ is  the
identity matrix in $\su$. If $S(0)$ is the immersed minimal sphere
in $X$ given in Theorem~\ref{main}, then:
\begin{enumerate}[(1)]
\item $S(0)$ is embedded and separates $X$ into two isometric balls that are interchanged
under left multiplication $l_{-I}$ by $-I$.
\item There exist three geodesics $\a _1,\a _2,\a _3$ of $X$ that intersect orthogonally in pairs,
such that each $\a _i$ is contained in $S(0)$, $\a _i$ is invariant
under the left action of a 1-parameter subgroup $\G_i$  of $X$ and
$\a _i$ is the fixed point set of an order-two, orientation
preserving isometry $\psi _i\colon X\to X$.
\item Consider the group $\mbox{\rm Iso}_X(S(0))$
of isometries of $X$ that leave $S(0)$ invariant. Then, $\mbox{\rm
Iso}_X(S(0))$ contains the subgroup
\[
\Delta =\{ l_I,l_{-I},\psi _i,\phi _i=(l_{-I})\circ \psi _i\ | \
i=1,2,3\},
\]
which is isomorphic to $\Z_2\times \Z_2\times \Z_2$ (here, $\phi
_1,\phi _2,\phi _3$ are defined in Proposition~\ref{propos3.2}).
Furthermore, if the isometry group of $X$ is three-dimensional, then
$\mbox{\rm Iso}_X(S(0))=\Delta $.
\end{enumerate}
\end{theorem}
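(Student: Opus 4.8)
The plan is to reduce all three items to a single key fact: the minimal sphere $S(0)$ of Theorem~\ref{main}, which by the normalization in that proof has center of symmetry $e=I$, is invariant as a set under the left translation $l_{-I}$ (the antipodal map of $\su$), and $l_{-I}$ reverses its coorientation. Before proving this I would record the auxiliary symmetries. Since $\su$ is unimodular, Proposition~\ref{propos3.2} supplies the three $\pi$-rotations $\phi_1,\phi_2,\phi_3$ about the subgroups $\G_1,\G_2,\G_3$; each $\phi_i$ fixes $e$, hence fixes the center of symmetry of $S(0)$ and so leaves $S(0)$ invariant by item~(\ref{familySt}) of Theorem~\ref{main}. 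Set $\psi_i:=l_{-I}\circ\phi_i$. Writing $\phi_i(x)=a_i^{-1}xa_i$ as an inner automorphism and using that $-I$ is central, $\psi_i$ is an orientation-preserving isometric involution whose fixed-point set $\alpha_i=\{x:\phi_i(x)=-x\}$ is determined purely algebraically, independently of the left invariant metric; it is a single closed curve, and since the fixed-point set of an isometry is totally geodesic, each $\alpha_i$ is a closed geodesic for every left invariant metric. A one-line bracket computation (using that $\G_i$ centralizes $a_i$) gives $l_g(\alpha_i)=\alpha_i$ for all $g\in\G_i$, which is the asserted $\G_i$-invariance; the $\alpha_j$ meet $\alpha_i$ ($j\neq i$) in the two order-four elements common to both.

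The main obstacle is the key fact $l_{-I}(S(0))=S(0)$. I would prove it by reversing orientation. The oriented sphere $\overline{S(0)}$ is again minimal, so by uniqueness up to left translations (part~(2) of Theorem~\ref{thm:index1}) there is $b\in\su$ with $\overline{S(0)}=l_b(S(0))$ as oriented spheres. Reversing orientation again and using that this commutes with the isometry $l_b$ yields $S(0)=\overline{l_b(S(0))}=l_b(\overline{S(0)})=l_{b^2}(S(0))$ as oriented spheres; since the center of symmetry is canonical, hence equivariant under isometries, comparing centers gives $b^2\cdot I=I$, so $b^2=e$. Moreover $b\neq e$, because $\overline{S(0)}$ and $S(0)$ carry opposite coorientations. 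As $-I$ is the unique element of order two in $\su$, we conclude $b=-I$, so $l_{-I}(S(0))=S(0)$ and $l_{-I}$ reverses the coorientation of $S(0)$. (Consistency check: left invariance of the left invariant Gauss map forces any coorientation-preserving left translation that fixes $S(0)$ setwise to act trivially on the diffeomorphic Gauss image, hence to be the identity on $S(0)$; this is why a nontrivial set-preserving $b$ must reverse the coorientation.)

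Granting the key fact, item~(1) follows quickly. The sphere $S(0)$ is embedded because \cite{smith1} provides an embedded minimal sphere in $X\cong\S^3$, which by part~(2) of Theorem~\ref{thm:index1} is a left translate of $S(0)$; an embedded $\S^2$ in $\S^3$ separates $X$ into two balls $B_1,B_2$ by Schoenflies, and since $l_{-I}$ is an isometry fixing $S(0)$ setwise while reversing its coorientation, it carries $B_1$ isometrically onto $B_2$. For item~(2) it remains to show $\alpha_i\subset S(0)$. As $S(0)$ is invariant under both $l_{-I}$ and $\phi_i$, it is $\psi_i$-invariant; $l_{-I}|_{S(0)}$ is orientation-reversing on $S(0)$ (orientation-preserving on $X$ but coorientation-reversing), while $\phi_i|_{S(0)}$ is orientation-preserving (its two fixed points on $S(0)$ are isolated), so $\psi_i|_{S(0)}$ is an orientation-reversing involution of $\S^2$, whose fixed set is exactly $\alpha_i\cap S(0)$ and is therefore empty or a simple closed curve. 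It is nonempty: $l_{-I}(\alpha_i)=\alpha_i$ (as $l_{-I}$ is central and commutes with $\psi_i$) while $l_{-I}$ swaps $B_1,B_2$, so $\alpha_i$ cannot lie in a single open ball, forcing it to meet $S(0)$. Hence $\alpha_i\cap S(0)$ is a nonempty closed $1$-manifold inside the connected geodesic $\alpha_i$, so $\alpha_i\subset S(0)$. Orthogonality at $p\in\alpha_i\cap\alpha_j$ follows from the spectral decomposition of the isometric involution $d(\psi_i)_p$: since the $\psi_i$ commute, $\psi_i(\alpha_j)=\alpha_j$ and $\psi_i(p)=p$, so $T_p\alpha_j$ is $d(\psi_i)_p$-invariant; it cannot be the $(+1)$-eigenline $T_p\alpha_i$ (the intersection is transverse), hence it lies in the $(-1)$-eigenspace, which is orthogonal to $T_p\alpha_i$.

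Finally, for item~(3), $l_{-I}$ is central in the isometry group (because $-I$ is central in $\su$) and commutes with each $\phi_i$, while $\phi_1,\phi_2,\phi_3$ generate a $\Z_2\times\Z_2$ with $\phi_i\phi_j=\phi_k$; adjoining the commuting involution $l_{-I}$ yields the abelian group $\Delta$ of exponent two and order eight, so $\Delta\cong\Z_2\times\Z_2\times\Z_2$, and all its elements preserve $S(0)$ by the above. For the reverse inclusion when $\dim\mathrm{Iso}(X)=3$, case~(1) of Description~\ref{des} gives $I_e(X)=I_e^+(X)=\Z_2\times\Z_2$, so every isometry of $X$ is orientation-preserving of the form $l_g\circ\phi$ with $\phi\in\{\mathrm{id},\phi_1,\phi_2,\phi_3\}$, and such a map fixes $S(0)$ iff $l_g$ does, i.e.\ iff $g$ lies in the set-stabilizer $\Sigma=\{g:l_g(S(0))=S(0)\}$. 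If $l_g$ fixes $S(0)$ setwise then $l_g(S(0))$ equals $S(0)$ or $\overline{S(0)}$ as oriented spheres; comparing centers of symmetry gives $g=e$ in the first case and, using $\overline{S(0)}=l_{-I}(S(0))$, $g=-I$ in the second, so $\Sigma=\{e,-I\}$. Therefore $\mathrm{Iso}_X(S(0))=\{\,l_g\circ\phi:\ g\in\{e,-I\},\ \phi\in\Z_2\times\Z_2\,\}=\Delta$, which completes the proof.
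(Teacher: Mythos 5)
Your proposal is correct in outline but follows a genuinely different route from the paper's. The paper obtains the two central facts --- that $l_{-I}$ preserves $S(0)$ and that the geodesics $\a_i$ lie on $S(0)$ --- constructively: it solves the Plateau problem for each $\G_i$ (Morrey, with Osserman and Gulliver--Lesley for branch-point regularity), Schwarz-reflects to produce a minimal sphere $D_i\cup\phi_i(D_i)$ that must be a left translate of $S(0)$, uses the Gauss map diffeomorphism to locate the points $A_i,A_i^*$ and to show that $A_i^*=l_{-I}(A_i)$ and that $A_i$ is the midpoint of $\G_i$ between $\pm I$, identifies $\a_i$ with explicit cosets such as $A_2\G_1$, and finally verifies algebraically that $\psi_1\circ\psi_2\circ\psi_3=l_{-I}$, which yields the $l_{-I}$-invariance. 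You instead get $l_{-I}(S(0))=S(0)$ directly from orientation reversal, uniqueness up to left translations, and equivariance of the center of symmetry ($b^2=e$, $b\ne e$, hence $b=-I$), and then deduce $\a_i\subset S(0)$ by a separation/fixed-point argument for the orientation-reversing involution $\psi_i|_{S(0)}$. Your route avoids the Plateau machinery entirely and is more economical; the paper's route yields the additional geometric information that $S(0)$ is a union of two Plateau disks bounded by left cosets of the $\G_i$ and pins down the $A_i$ as order-four elements, which feeds into Figure~\ref{figS(0)} and the remark preceding the theorem.

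Three steps in your argument deserve more care. First, in the key fact, ruling out $b=e$ requires knowing that $\overline{S(0)}$ and $S(0)$ cannot coincide as oriented immersed spheres; for a merely immersed sphere this is delicate (a priori $f$ could factor through a one-sided immersion of a projective plane), so you should invoke embeddedness --- which you derive independently from \cite{smith1} and uniqueness --- before, not after, the key fact. Second, your assertion that $\phi_i|_{S(0)}$ is orientation preserving ``because its two fixed points are isolated'' is circular as stated: the fixed set of $\phi_i|_{S(0)}$ is $G^{-1}\bigl(\mathrm{Fix}(\pm d(\phi_i)_e)\bigr)$, and the sign here is exactly the coorientation behaviour you are trying to determine (with the minus sign the fixed set would be a whole circle and $\phi_i|_{S(0)}$ orientation reversing). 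This is fixable, e.g.\ by observing that $\phi_i$ fixes $I$, which lies in one of the two open balls bounded by the embedded sphere $S(0)$, so $\phi_i$ preserves each side and hence the coorientation; or by comparing the centers of symmetry of $\phi_i(S(0))$ and of $\overline{S(0)}=l_{-I}(S(0))$. Third, the equivariance of the center of symmetry under left translations is used repeatedly and silently; it does follow from the naturality of the construction in the proof of Theorem~\ref{main}, but it should be stated explicitly. None of these is a fatal flaw, and with these repairs your argument stands as a valid alternative proof.
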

\begin{proof}
Consider the minimal sphere $S(0)$ given in the family $S(t)$
described at the end of Theorem~\ref{main}. Since $S(0)$ lies in the
family $\mathcal{C}$ defined in item~(3) of
Theorem~\ref{thm:index1}, then $S(0)$ is embedded (see the proof of
Corollary \ref{alex-cor}).

As explained in Section~\ref{sec:background}, if $E_1,E_2,E_3$ is an
orthonormal canonical left invariant frame for $X$, then the Ricci
tensor of $X$ diagonalizes in this basis, and for $i=1,2,3$, the
corresponding 1-parameter subgroup $\G _i$ of $X$ with $\G
_i'(0)=(E_i)_I$ is a geodesic of $X$ and it is the fixed point set
of the rotation $\phi _i$ of angle $\pi $ about $\G _i$. Let $D_i$
be an immersed minimal disk in $\su $ with boundary $\G_i $. For
instance, $D_i$ could be chosen to be a solution of the classical
Plateau problem for $\G_i$ (i.e., a disk of least area among all
disks bounded by $\G _i$), which exists by results of
Morrey~\cite{mor1}; this solution is free of branch points in its
interior by Osserman~\cite{os7} and free of boundary branch points
by Gulliver and Lesley~\cite{gl2} since both $\G_i$ and the ambient
metric are analytic. After $\pi $-rotation of $D_i$ about $\G_i  $,
$D_i\cup \phi_i(D_i)$ is an immersed minimal sphere in $\su $ (see
for instance Theorem~1.4 in Harvey and Lawson~\cite{hl1} for this
Schwarz reflection principle in the general manifold setting), which
by item~(2) of Theorem~\ref{thm:index1} is equal to some left
translate of $S(0)$, say
\[
S(0)=l_{b_i}\left( D_i\cup \phi _i(D_i)\right)
\]
for some $b_i\in \su $.

Define $\alfa_i=l_{b_i} (\Gamma_i)$. By the previous discussion,
$\alfa_i$ is a geodesic of $X$ contained in $S(0)$ and $\a _i$ is an
integral curve of the left invariant vector field $E_i$.
Furthermore, the diffeomorphism
\[
\psi_i:=l_{b_i}\circ \phi_i \circ l_{b_i^{-1}}\colon X\to X
\]
is an isometry of $X$, namely the $\pi $-rotation around $\alfa_i$,
and $\psi _i$ leaves $S(0)$ invariant.

As each $\alfa_i$ is contained in $S(0)$ and it is an integral curve
of the left invariant vector field $E_i$, then the left invariant
Gauss map $G$ of $S(0)$ satisfies that $G(\alfa_i)\subset \S^2$ is
the great circle in $\S^2$ of unit vectors in $T_I X$ orthogonal to
$(E_i)_I$. Since $G$ is a diffeomorphism, there exist unique points
$A_1,A_1^*,A_2,A_2^*,A_3,A_3^* \in S(0)$ such that:
\begin{enumerate}
\item $\alfa_1\cap \alfa_3=\{A_2,A_2^*\}$, $\alfa_2\cap \alfa_3=\{A_1,A_1^*\}$, $\alfa_1\cap \alfa_2=\{A_3,A_3^*\}$.
\item  The left invariant Gauss map of $S(0)$ at $A_i$ (resp. at $A_i^*$) is $(E_i)_I$ (resp. $(-E_i)_I$).
\end{enumerate}
Observe that this proves item~(2) of Theorem~\ref{embed:su2}.

Recall that $S(0)$ is invariant under all isometries of $X$ that fix
$I$. Thus, $\phi_i$ leaves $S(0)$ invariant. As $\phi _i$ is the
$\pi $-rotation around $\G _i$, then $\G_i$ intersects $S(0)$
orthogonally. This implies that the tangent vectors to $\G _i$ at
the intersection points in $\G _i\cap S(0)$ are normal to $S(0)$ at
these points. As $\G _i$ is an integral curve of $E_i$ and as the
left invariant Gauss map of $S(0)$ is a diffeomorphism, then we
conclude that $\Gamma_i\cap S(0)=\{A_i,A_i^*\}$ for $i=1,2,3$.

We claim that $A_i^*= l_{-I} (A_i)$ for $i=1,2,3$ (this explains the
notation $A_i^*=-A_i$ in Figure~\ref{figS(0)}), and that $A_i$ is
the midpoint in $\G_i$ between $I$ and $-I$. We will prove it for
$i=1$; the proofs of the other two cases are analogous. As $\phi_2$
leaves $S(0)$ invariant and also leaves $\G _1$ invariant (by
Proposition~\ref{propos3.2}), then $\phi_2(A_1)=A_1^*$ and $\phi_2
(A_1^*)=A_1$. As $\phi_2(I)=I$ and $\phi_2$ is an isometry, we
deduce that $A_1$ and $A_1^*$ are at the same distance to $I$ along
$\G_1$. By Proposition~\ref{propos3.2}, the $\pi$-rotation $\psi_2$
around $\alfa_2$ maps integral curves of $E_i$ to integral curves of
$E_i$ for each $i=1,2,3$. As $\psi_2$ fixes $\alfa_2$ and $\G_1\cap
\alfa_2=\{A_1,A_1^*\}\neq \mbox{\O}$, we conclude that $\psi_2$
sends $\G_1$ to $\G_1$. As $A_1,A_1^*$ are fixed under $\psi_2$, we
see then that $\psi_2$ maps the segment in $\G_1$ with endpoints
$A_1,A_1^*$ that contains $I$ as midpoint into the segment in $\G_1$
with endpoints $A_1,A_1^*$ that contains $-I$ as midpoint. As
$\psi_2$ is an isometry, the two segments have equal length. This
property implies that $A_1^*= l_{-I} (A_1)$ as claimed.

\begin{figure}
\begin{center}
\includegraphics[height=5.1cm]{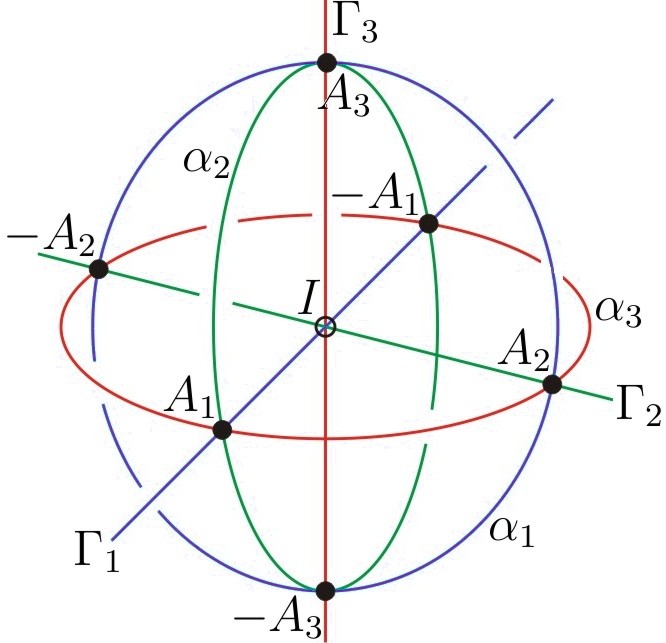}
\caption{The three straight lines passing through the identity
matrix $I\in \su $ represent the three 1-parameter subgroups $\G _i$
generated by an orthonormal canonical basis of $X$ (each of the
$\G_i$ passes through $-I$ which is represented by infinity in this
picture). The midpoints of each pair of arcs $\G _i-\{ \pm I\} $ are
the points $\{ \pm A_i\} $, and the left translation of each $\G_i$
by a point $\pm A_j$, $j\neq i$, is a geodesic $\a _i$ contained in
the minimal sphere $S(0)$, which is invariant under $\pi $-rotation
about $\a _i$, $i=1,2,3$.}
 \label{figS(0)}
\end{center}
\end{figure}

Observe that we have proved that
\begin{equation}
\label{eq:7.1} \a _1=A_2\, \G _1,\quad \a _2=A_1\G _2,\quad \a
_3=A_1\G _3,
\end{equation}
where $\{ A_i,-A_i=l_{-I} ( A_i)=A_i^*\} $ are the midpoints of the
two arcs in which the points $I,-I$ divide $\G_i$. In particular,
$A_i^{-1}=-A_i$ (inverse elements in the group structure of $\su $).
By (\ref{eq:7.1}), the $\pi$-rotations $\psi_i$ around $\a _i$ are
given by
\[
\psi _1=l_{A_2}\circ \phi _1\circ  l_{A_2}^{-1}, \quad \psi
_2=l_{A_1}\circ \phi _2 \circ l_{A_1}^{-1}, \quad \psi
_3=l_{A_1}\circ \phi _3 \circ l_{A_1}^{-1}.
\]
An algebraic calculation shows that $\psi_1\circ \psi_2\circ \psi_3=
l_{-I}$ (evaluation at the points $I, A_i$ show that the desired
equality holds at four linearly independent points of the unit
sphere in $\R^4$ equipped with its standard metric; now the equality
holds by elementary linear algebra). As each $\psi _i$ leaves $S(0)$
invariant, then we deduce that $S(0)$ is also invariant under
$l_{-I}$. Note that we already proved that $S(0)$ divides $\su$ into
two isometric components, of which one contains $I$ and the other
contains $-I$. Therefore, we have proved item~(1) of
Theorem~\ref{embed:su2}.

We next prove item~(3). By our previous discussion, we know that
$\{l_{I},\psi _i,\phi _i\ | \ i= 1,2,3\}\subset \mbox{Iso}_X(S(0))$.
Algebraic computations show that $l_{-I}\circ \psi_i =\phi_i$,
$i=1,2,3$, which proves that $\Delta \subset \mbox{Iso}_X(S(0))$,
where $\Delta =\{ l_{I},l_{-I},\psi _i,\phi _i=l_{-I}\circ \psi _i\
| \ i= 1,2,3\} $. This proves the first sentence in item~(3) of
Theorem~\ref{embed:su2} (actually, $\Delta ^*=\{ l_{I}, \phi _1,\phi
_2,\phi _3\} $ is the $(\Z_2\times \Z_2$)-subgroup of $\Delta $ of
isomorphisms of $\su $ and $\Delta = \Delta ^*\cup \{ l_{-I}\circ
\chi \ | \ \chi \in \Delta ^*\} $). In the case that the left
invariant metric on $X$ has three-dimensional isometry group, then
Proposition~2.24 in~\cite{mpe11} ensures that $X$  has no
orientation reversing isometries and that the isometries of $X$ that
preserve $I$ are precisely the ones in $\Delta^* $. This last
observation completes the proof of item~(3) of
Theorem~\ref{embed:su2}.
\end{proof}

\begin{remark}
\label{smith}
{\rm
Let $X$ be a metric Lie group isomorphic to $\su$ and let $Y$ be the related quotient
metric Lie group $Y=X/\{I,-I\}$,  which is isomorphic to   $\mathrm{SO}(3)$.  Since the
minimal sphere $S(0)$ in Theorem~\ref{embed:su2} is invariant under
left multiplication by $-I$, then
$\P(0):=S(0)/\{I,-I\}\subset Y$ is an embedded minimal projective plane and any other minimal
projective plane in $Y$ is congruent by an ambient isometry to $\P(0)$.
Furthermore, as $S(0)$ has index one, then $\P(0)$ is stable (this means that
$-\int _{S(0)}u\mathcal{L}u\geq 0$ for all $u\in C^{\infty }(S(0))$
which is anti-invariant with respect to the left multiplication by $-I$, where
$\mathcal{L}$ is the Jacobi operator of $S(0)$ defined in equation (\ref{eq:Jacobi})).
The existence of a minimal projective plane
in $Y$ that has least area among all embedded projective planes in $Y$ follows
from the main existence theorem in~\cite{msy1}; this second approach gives an
independent proof to the existence of an embedded minimal sphere
in $X$ that avoids using the main theorem in~\cite{smith1}.  In~\cite{mmp1}, we study the
classification problem of compact minimal surfaces
bounded by integral curves of left invariant vector fields in $X$, and as a consequence
of this study, we give an independent
proof of the existence of an embedded minimal sphere in $X$ based on
disjointness and embeddedness properties of  the Hardt-Simon
solution to the oriented Plateau's for these special curves that are described  in~\cite{hrs1}.
}\end{remark}

\bibliographystyle{plain}
\bibliography{bill}
\end{document}